\theoremstyle{plain}
\newtheorem{thm}{Theorem}[section]
\newaliascnt{cor}{thm}
\newaliascnt{prop}{thm}
\newaliascnt{lem}{thm}
\newtheorem{cor}[cor]{Corollary}
\newtheorem{prop}[prop]{Proposition}
\newtheorem{lem}[lem]{Lemma}
\theoremstyle{definition}
\newaliascnt{defn}{thm}
\newaliascnt{asu}{thm}
\newaliascnt{con}{thm}
\newcounter{stp}
\newcounter{stpi}
\newcounter{stpci}
\newcounter{stpiii}
\theoremstyle{remark}
\newaliascnt{rem}{thm}
\newaliascnt{exa}{thm}
\newaliascnt{masu}{thm}
\newaliascnt{nota}{thm}
\newaliascnt{sett}{thm}
\newtheorem{rem}[rem]{Remark}
\numberwithin{equation}{section}
\setlist[enumerate]{font = \normalfont}
\newcommand{\F}{\mathbb{F}}
\newcommand{\E}{\mathbb{E}}
\newcommand{\R}{\mathbb{R}}
\newcommand{\C}{\mathbb{C}}
\newcommand{\N}{\mathbb{N}}
\newcommand{\bP}{\mathbb{P}}
\newcommand{\T}{\mathbb{T}}
\newcommand{\D}{\mathbb{D}}
\newcommand{\bS}{\mathbb{S}}
\newcommand{\tE}{\Tilde{\mathbb{E}}}
\newcommand{\rW}{\mathrm{W}} 
\newcommand{\rL}{\mathrm{L}}
\newcommand{\rE}{\mathrm{E}}
\newcommand{\rH}{\mathrm{H}}
\newcommand{\rT}{\mathrm{T}}
\newcommand{\rC}{\mathrm{C}}
\newcommand{\rB}{\mathrm{B}}
\newcommand{\rN}{\mathrm{N}}
\newcommand{\rX}{\mathrm{X}}
\newcommand{\rY}{\mathrm{Y}}
\newcommand{\rJ}{\mathrm{J}}
\newcommand{\rI}{\mathrm{I}}
\newcommand{\rII}{\mathrm{II}}
\newcommand{\rIII}{\mathrm{III}}
\newcommand{\rIV}{\mathrm{IV}}
\newcommand{\rV}{\mathrm{V}}
\newcommand{\rVI}{\mathrm{VI}}
\newcommand{\rVII}{\mathrm{VII}}
\newcommand{\rVIII}{\mathrm{VIII}}
\newcommand{\rIX}{\mathrm{IX}}
\newcommand{\BUC}{\mathrm{BUC}}
\newcommand{\SO}{\mathrm{SO}}
\newcommand{\rd}{\,\mathrm{d}}
\newcommand{\mre}{\mathrm{e}}
\newcommand{\avg}{\mathrm{avg}}
\newcommand{\dist}{\mathrm{dist}}
\newcommand{\Cmr}{C_\mathrm{MR}}
\newcommand{\Cexp}{C_\mathrm{exp}}
\newcommand{\mS}{m_\mathcal{S}}
\newcommand{\cS}{\mathcal{S}}
\newcommand{\cF}{\mathcal{F}}
\newcommand{\cE}{\mathcal{E}}
\newcommand{\cO}{\mathcal{O}}
\newcommand{\cB}{\mathcal{B}}
\newcommand{\cL}{\mathcal{L}}
\newcommand{\cM}{\mathcal{M}}
\newcommand{\cN}{\mathcal{N}}
\newcommand{\cG}{\mathcal{G}}
\newcommand{\cQ}{\mathcal{Q}}
\newcommand{\cK}{\mathcal{K}}
\newcommand{\cJ}{\mathcal{J}}
\newcommand{\cT}{\mathcal{T}}
\newcommand{\cR}{\mathcal{R}}
\newcommand{\tcK}{\Tilde{\cK}}
\newcommand{\tv}{\Tilde{v}}
\newcommand{\td}{\Tilde{d}}
\newcommand{\tp}{\Tilde{p}}
\newcommand{\tell}{\Tilde{\ell}}
\newcommand{\tomega}{\Tilde{\omega}}
\newcommand{\tz}{\Tilde{z}}
\newcommand{\hv}{\widehat{v}}
\newcommand{\hd}{\widehat{d}}
\newcommand{\hp}{\widehat{p}}
\newcommand{\hell}{\widehat{\ell}}
\newcommand{\homega}{\widehat{\omega}}
\newcommand{\hz}{\widehat{z}}
\newcommand{\ovd}{\overline{d}}
\newcommand{\eps}{\varepsilon}
\newcommand{\del}{\partial}
\DeclareMathOperator{\Id}{Id}
\DeclareMathOperator{\mdiv}{div}
\newcommand{\tin}{\enspace \text{in} \enspace}
\newcommand{\ton}{\enspace \text{on} \enspace}
\newcommand{\tfor}{\enspace \text{for} \enspace}
\newcommand{\tforall}{\enspace \text{for all} \enspace}
\newcommand{\tand}{\enspace \text{and} \enspace}
\newcommand{\tso}{\enspace \text{so} \enspace}
\newcommand{\tif}{\enspace \text{if} \enspace}
\newcommand{\twith}{\enspace \text{with} \enspace}
\newcommand{\taswellas}{\enspace \text{as well as} \enspace}
\begin{document}

\title[Interaction of liquid crystals with a rigid body]{Interaction of liquid crystals with a rigid body}


\author{Tim Binz}
\address{Princeton University,
Program in Applied \& Computational Mathematics, Fine Hall, Washington Road, 08544 Princeton, NJ, USA.}
\email{tb7523@princeton.edu}
\author{Felix Brandt}
\address{Technische Universit\"{a}t Darmstadt,
Schlo\ss{}gartenstra{\ss}e 7, 64289 Darmstadt, Germany.}
\email{brandt@mathematik.tu-darmstadt.de}
\email{hieber@mathematik.tu-darmstadt.de}
 \email{royarnab244@gmail.com}
\author{Matthias Hieber}
\author{Arnab Roy}
\subjclass[2020]{35Q35, 76A15, 74F10, 76D03, 35K59}
\keywords{Liquid crystal-rigid body interaction, Ericksen-Leslie model, fluid-structure interaction, strong solutions, convergence to equilibria}

\begin{abstract}
This article investigates the interaction of nematic liquid crystals modeled by a simplified Ericksen-Leslie model with a rigid body.
It is shown that this problem is locally strongly well-posed, and that it also admits a unique, global strong solution for initial data close to constant equilibria.
The proof of the global strong solution relies on  a new splitting method for the director in a mean value zero and average part.
\end{abstract}

\maketitle

\section{Introduction}

Ericksen \cite{Ericksen:62} and Leslie \cite{Leslie:68} pioneered the development of a continuum theory for the flow of nematic liquid crystals. Their theory models the flow of nematic crystals 
from a fluid dynamical point of view and describes the evolution of the underlying system by a certain fluid equation coupled to the configuration $d$ of rod-like crystals. The original 
derivation is based on the 
conservation laws for mass and linear momentum as well as on the constitutive relations given by Ericksen \cite{Ericksen:62} and Leslie \cite{Leslie:68}. The system subject to general Leslie and 
general Ericksen stress is quite involved, and many analytical questions related to the resulting model remain open until today.

A simplified system was introduced and analyzed by Lin \cite{Lin:89} as well as by Lin and Liu \cite{LL:95}.
They studied first the situation where the nonlinearity in the equation for the director $d$ is replaced by a 
Ginzburg-Landau term and proved the existence of global weak solutions under suitable assumptions. Considering the original situation where the nonlinear terms describing  the evolution of the 
director $d$  are kept in their original form and are not replaced by Ginzburg-Landau terms, a rather complete understanding of the 
well-posedness and the dynamics of the simplified system in the strong sense was obtained in \cite{HNPS:16}. In particular, it was shown that the condition $|d|=1$ is preserved by the system for all 
$t>0$. 

Only recently, a thorough analysis of the Ericksen-Leslie system subject to the general Ericksen and Leslie stresses was obtained by Hieber and Pr\"uss \cite{HP:16, HP:17, HP:19}. For related results 
concerning various assumptions on the Leslie as well as on the Ericksen coefficients, we refer to  Lin \cite{LL:01}, Lin and Liu \cite{LL:16}, and  Wu, Xu and Liu\cite{WXL:13} as well as to the 
survey articles  \cite{HP:18} and \cite{WZZ:21}.

The present article investigates for the first time the interaction problem of a rigid body with nematic liquid crystals described by a simplified Ericksen-Leslie model as considered in \cite{HNPS:16}. It 
establishes the existence and uniqueness of a strong solution, both, locally-in-time and globally-in-time for initial data close to equilibria. One essential feature of our approach is that we do not 
use the Ginzburg-Landau energy functional, but we show that the director condition is preserved in the interaction problem.
Moreover, the equilibria are deduced from an investigation of the energy functional.

The main difficulties in our stability analysis of the equilibria arise from the lack of autonomy of the transformed system on a fixed domain as well as the lack of invertibility of the linearization, so classical tools 
such as principles of linearized stability fall short in this case. Instead, inspired by the work of Haak, Maity, Takahashi and Tucsnak \cite{HMTT:19} in the context of  compressible fluids, we split 
the equations satisfied by the director 
into one part with mean value zero and the associated average part. This enables us to handle the linearized problem on the whole time interval $\R_+$ and paves the way to the 
global well-posedness result with certain decay properties.

The  interaction of a  rigid body immersed in an incompressible Newtonian fluid without a liquid crystal component is, of course, a very classical topic.  
We refer here  only to the works of  Desjardins and Esteban \cite{DE:99, DE:00}, Conca, San Mart\'in, Tucsnak \cite{CSMT:00},  Ne{\v{c}}asov{\'a} et al.\ \cite{nevcasova2021}, 
Feireisl, Roy and Zarnescu \cite{FRZ:23} in the weak setting and to  Galdi \cite{Galdi:02}, Galdi and Silvestre \cite{GS:02}, Takahashi \cite{Takahashi:03}, Cumsille and Takahashi \cite{CT:08},  
Geissert, G\"otze and Hieber \cite{GGH:13}, Kaltenbacher, Kukavica, Lasiecka, Triggiani, Tuffaha and Webster \cite{KKLTTW:18}, Maity and Tucsnak \cite{MT:18}  and Ervedoza, Maity and Tucsnak 
\cite{EMT:23} in the strong setting. 

For related results in the context of elastic structures, we refer for example to the articles of  Coutand and Shkoller \cite{CS:05},  Grandmont, Hillairet and Lequeurre 
\cite{GHL:19}; for results  in the  non-viscous case let us refer for example to  the work of Glass, Lacave and Sueur \cite{GLS:16}. We mention here also the recent article 
\cite{geng2023global} on weak solutions to the interaction of a rigid body with liquid crystals described by the $Q$-tensor model, which is fundamentally different from the Ericksen-Leslie model.  
      
Let us emphasize that the interaction problem under consideration is different from the ones associated to incompressible Newtonian fluids due to the elasticity component in the stress tensor and its  
interaction with the rigid body. 

The physical motivation to study liquid crystal-rigid body interaction results from so-called liquid crystal colloids formed by dispersion of colloidal particles in the liquid crystal host medium.
The rigid body in our analysis can be viewed as a colloidal particle. For further details on the physical background, we refer e.g.\ to \cite{SEN,MU} or \cite{MO}.

The first step for solving the interaction problem is the transformation of the moving domain problem to a fixed domain problem by means of a local change of coordinates due to Inoue and Wakimoto \cite{IW:77}.
In return, additional time-dependent terms arise, and the transformed system is of a more complicated shape.
The proof of the local strong well-posedness relies on the maximal regularity of the linearized interaction problem.

With regard to the global result, we require maximal $\rL^p([0,\infty))$-regularity of the linearization around constant equilibria.
In fact, we obtain a maximal $\rL^p([0,\infty))$-regularity type estimate with exponential decay of the complete system of equations by invoking the fluid-structure semigroup from \cite{MT:18} and performing the splitting of the equations satisfied by the director into a part with mean value zero in the spatial domain and the remaining average part.

The proofs are completed by Lipschitz estimates of the nonlinear terms for which we rely on appropriate estimates of the terms associated to the coordinate transform in the local and global setting.
In particular, we verify that no collision of the rigid body and the outer boundary of the viscous incompressible fluid with active liquid crystals can occur provided the rigid body starts with an initial distance from the boundary, and the initial values are taken sufficiently close to the equilibria.

This paper is organized as follows.
In \autoref{sec:main results}, the model for the interaction problem is introduced, and the main results on the local strong well-posedness as well as the global strong well-posedness close to constant equilibria are stated.
The aim of \autoref{sec:dir cond energy equilibria} is twofold:
On the one hand, it is shown that the director condition is preserved in the interaction problem.
On the other hand, this section discusses the energy of the interaction problem and deduces the equilibrium solutions from there.
\autoref{sec:change of var} is dedicated to the transformation of the interaction problem to a fixed domain, and the main results are reformulated in this reference configuration.
In \autoref{sec:lin theory}, we establish the maximal regularity of the linearized problem.
Moreover, we show the maximal $\rL^p([0,\infty))$-regularity of the linearization around constant equilibria up to a shift.
\autoref{sec:proof local well-posedness} presents the proof of the local strong well-posedness by a fixed point argument.
For this, estimates of the nonlinear terms based on a good control of the terms resulting from the coordinate transform are proved.
In \autoref{sec:proof global strong wp}, we first introduce the maximal regularity type result in order to get exponential decay and then conclude the proof of the global strong well-posedness close to constant equilibria from the contraction mapping principle by invoking adjusted estimates of the transform.

\section{Description of the model and main results}\label{sec:main results}

Let $\cO \subset \R^3$ denote a bounded domain with boundary of class $\rC^3$, and let $0 < T \le \infty$ represent a positive time.
We consider a rigid body moving inside $\cO$ which is assumed to be closed, bounded and simply connected.
Furthermore, the domain occupied by the rigid body is denoted by $\cS(t)$ for $t \in (0,T)$, and the remaining part of the domain $\cF(t) \coloneqq \cO \setminus \cS(t)$ is assumed to be filled by a viscous incompressible fluid with active liquid crystals.
The initial domain of the rigid body is denoted by $\cS_0$.
Besides, we suppose that its boundary $\del \cS_0$ is of class $\rC^3$ as well.
Then $\cF_0 =\cO \setminus \cS_0$ is the initial fluid domain.
The solid domain $\cS(t)$ at time $t$ is given by 
\begin{equation}\label{eq:solid dom}
    \cS(t)= \left\{h(t)+ Q(t)y : y\in \cS_{0}\right\}, 
\end{equation}
where $h(t)$ represents the center of mass of the body, and $Q(t) \in \SO(3)$ is associated to the rotation of the rigid body.
Moreover, since $Q'(t)Q^{\top}(t)$ is skew symmetric, for each $t>0$, we can represent this matrix by a unique vector $\Omega(t)\in \R^3$ such that
\begin{equation*}
    Q'(t)Q^{\top}(t)y=\Omega (t)\times y, \tforall y\in \R^3.
\end{equation*}
By $\cF_T$, $\cS_T$ and $\del \cS_T$, we denote the spacetime related to $\cF(t)$, $\cS(t)$ and $\del \cS(t)$, respectively, i.e.,
\begin{equation*}
    \cF_T \coloneqq \{(t,x) : t \in (0,T), \, x \in \cF(t)\}, \enspace \cS_T \coloneqq \{(t,x) : t \in (0,T), \,x \in \cS(t)\}, \tand
\end{equation*}
\begin{equation*}
    \del \cS_T 
        \coloneqq \{(t,x) : t \in (0,T), \, x \in \del \cS(t)\}.
\end{equation*}
The velocity of the rigid body is described by
\begin{equation*} 
    u^{\cS}(t,x)= h'(t)+  \Omega(t) \times (x-h(t)), \tforall (t,x) \in \cS_T, 
\end{equation*}
where $h' \colon (0,T) \to \R^3$ is the translational velocity of the center of mass, and $\Omega \colon (0,T) \to \R^3$ denotes the angular velocity of the body.  

In the context of the Ericksen-Leslie model for nematic liquid crystals, the model variables are the fluid velocity $u^{\cF} \colon \cF_T \to \R^3$, the fluid pressure $\pi^{\cF} \colon \cF_T \to \R$, and the director $\ovd \colon \cF_T \to \bS^2$, where $\bS^2=\left\{x\in\R^3 : |x|=1\right\}$.
We set $\D = \nicefrac{1}{2}(\nabla u^{\cF} + (\nabla u^{\cF})^\top)$. The stress tensor in the situation of the simplified isothermal Ericksen-Leslie model under consideration is given by $\T=\T_N+ \T_E$, where
\begin{equation*}
    \T_N = 2\mu \D, \tand \T_E 
        = -\lambda\nabla \ovd \left(\nabla \ovd\right)^{\top}, \tso \T= 2\mu  \D -\lambda\nabla d (\nabla d)^{\top},
\end{equation*}
and $\mu > 0$ and $\lambda > 0$ are constants.
Upon assuming that the constant fluid density $\rho^{\cF}$ equals one, and for another constant $\gamma > 0$, we obtain the following \textit{simplified isothermal Ericksen-Leslie model} 
\begin{equation}\label{eq:LC-fluid equationsiso}
\left\{
    \begin{aligned}
        {\partial_t u^{\cF}} + \left(u^{\cF}\cdot \nabla\right)u^{\cF} + \nabla \pi^{\cF} &=\mu\Delta u^{\cF}  - \lambda \mdiv\left(\nabla \ovd \left(\nabla \ovd\right)^{\top}\right), &&\tin \cF_T ,\\
        \mdiv u^{\cF}&=0, &&\tin \cF_T ,\\
        \gamma\left[\partial_t \ovd + \left(u^{\cF}\cdot \nabla\right)\ovd\right] -\lambda \Delta \ovd &= \lambda |\nabla \ovd|^2 \ovd, &&\tin \cF_T .
    \end{aligned}
\right.
\end{equation}

With regard to the rigid body, let $\rho_{\cS}$ denote its density.
For simplicity, we assume that $\rho_{\cS} \equiv 1$ in the sequel, yielding that $\mS = \int_{\cS(t)} \rd x$ is its total mass.
In addition, without loss of generality, we suppose that the center of mass of the rigid body starts in the origin, i.e., $h(0) = 0$. Let $J(t)$ be the inertia matrix and the initial value of the inertial matrix is $J_0$.  Here $J_0$ and $J(t)$ are given by
\begin{equation*}
    J_0 =  \int\limits_{\cS_0}\left(|x|^2\Id - x\otimes x\right) \rd x \tand J(t) =  \int\limits_{\cS(t)}
 \left(|x-h(t)|^2\Id - (x-h(t))\otimes (x-h(t))\right)\rd x,
\end{equation*}
respectively.
Consequently, $J(t)$ is symmetric positive definite and satisfies
\begin{equation}\label{eq:Syl and Jab}
    J(t)=Q(t)J_0 Q^{\top}(t) \taswellas J(t)a\cdot b=\int\limits_{\cS_0} (a \times Q(t)y)\cdot (b\times Q(t)y)\rd y, \tforall \ a,b \in \R^3.
\end{equation}
For the underlying total stress tensor
\begin{equation}\label{eq:stress tensor}
    \Sigma(u^{\cF},\pi^{\cF}, \ovd)=\left(\T-\pi^{\cF}\Id\right)=2\mu  \D -\lambda\nabla \ovd \left(\nabla \ovd\right)^{\top}-\pi^{\cF}\Id,
\end{equation}
the motion of the rigid body can be described by Newton's law, i.e., 
\begin{equation}\label{eq:rigid body eqs intro}
    \mS h''(t)
    = -\int\limits_{\partial \cS(t)} \Sigma(u^{\cF},\pi^{\cF}, \ovd) \nu(t)\rd \Gamma , \tand 
    (J\Omega)' (t) = -\int\limits_{\partial \cS(t)} (x-h(t)) \times  \Sigma(u^{\cF},\pi^{\cF}, \ovd) \nu(t)\rd \Gamma,
\end{equation}
where $\nu(t)$ denotes the unit outward normal to the boundary of $\cF(t)$, i.e., it is directed towards $\cS(t)$.

We assume that the fluid velocity satisfies homogeneous boundary conditions on the outer boundary and coincides with the velocity of the rigid body on their common interface, while the director is supposed to fulfill homogeneous Neumann boundary conditions, i.e.,
\begin{equation}\label{eq:bdry cond}
    u^{\cF} =0, \enspace \partial_\nu \ovd = 0, \ton (0,T) \times \del \cO, \tand u^{\cF} = u^{\cS}, \enspace \partial_\nu \ovd = 0, \ton \del \cS_T.
\end{equation}
The system is completed by the initial conditions
\begin{equation}\label{eq:initial}
    u^{\cF}(0,x)=v_0(x),\enspace \ovd(0,x)=d_0(x) \tin \cF_0,\enspace h(0)=0, \enspace h'(0)=\ell_0, \tand \Omega(0)=\omega_0.
\end{equation}

For the statements of the main theorems, we require the precise notion of solutions, so we first define function spaces on time-dependent domains.
In fact, let $X \colon \cF_T \to \cF_0$ be a map such that
\begin{equation*}
    \varphi \colon \cF_T \to (0,T) \times \cF_0, \enspace (t,x) \mapsto (t,X(t,x))
\end{equation*}
is a $\rC^1$-diffeomorphism, and $X(\tau,\cdot) \colon \cF(\tau) \to \cF_0$ are $\rC^2$-diffeomorphisms for all $\tau \in [0,T]$.
We then define
\begin{equation*}
    \rW^{s,p}(0,T;\rW^{l,q}(\cF(\cdot))) \coloneqq \{f(t,\cdot) \colon \cF(t) \to \R : f \circ \varphi \in \rW^{s,p}(0,T;\rW^{l,q}(\cF_0))\}
\end{equation*}
for $p,q \in (1,\infty)$, $s \in \{0,1\}$ and $l \in \{0,1,2\}$.
By $\rL_0^q(\cF_0)$, we will denote the functions in $\rL^q(\cF_0)$ with mean value zero.
Next, we settle some notation with regard to the initial data.
For this purpose, let us recall the space of solenoidal vector fields on $\rL^q(\cF_0)^3$ defined by $\rL_\sigma^q(\cF_0) := \bP \rL^q(\cF_0)^3$, where $\bP$ represents the Helmholtz projection on $\cF_0$.
Denoting the principle variable by $z_0 = (v_0,d_0,\ell_0,\omega_0)$, we then define
\begin{equation}\label{eq:trace space}
    \begin{aligned}
        \rX_\gamma \coloneqq \bigl\{
        &z_0 \in \rB_{qp}^{2-\nicefrac{2}{p}}(\cF_0)^3 \cap \rL_\sigma^q(\cF_0) \times \rB_{qp}^{2-\nicefrac{2}{p}}(\cF_0)^3 \times \R^3 \times \R^3 : v_0 = 0, \enspace \del_\nu d_0 = 0, \ton \del \cO, \tand\\
        &v_0 = \ell_0 + \omega_0 \times y, \enspace \del_\nu d_0 = 0, \ton \del \cS_0 \bigr\},
    \end{aligned}
\end{equation}
where $\rB_{qp}^{s}(\cF_0)$ denotes the Besov space with parameters $p, q \in (1,\infty)$ and $s \in \R$.
The fact that the trace and the normal derivative are defined is guaranteed by the following conditions.
Let $p,q \in (1,\infty)$ fulfill
\begin{equation}\label{eq:cond p and q}
    \frac{2}{p} + \frac{3}{q} < 1.
\end{equation}
We observe that \eqref{eq:cond p and q} yields the embedding $\rB_{qp}^{2-\nicefrac{2}{p}}(\cF_0) \hookrightarrow \rC^1(\overline{\cF_0})$, see \cite[Theorem~4.6.1]{Tri:78}.
The first main result of this paper on the local strong well-posedness of the interaction problem reads as follows.

\begin{thm}\label{thm:local wp}
Let $p,q \in (1,\infty)$ be such that \eqref{eq:cond p and q} holds true, let $\cO \subset \R^3$ be a bounded domain of class $\rC^3$, and consider the domains $\cS_0$ and $\cF_0$ occupied by the rigid body and the fluid at time zero.
Furthermore, let $z_0 = (v_0,d_0,\ell_0,\omega_0) \in \rX_\gamma$, where $\rX_\gamma$ is defined in \eqref{eq:trace space} and $|d_0| = 1$.
If for some $r > 0$, it is valid that $\dist(\cS_0,\partial \cO) > r$, then there are $T' \in (0,T]$ and $X \in \rC^1\left([0,T'];\rC^2(\R^3)^3\right)$ such that $X(\tau,\cdot) \colon \cF(\tau) \to \cF_0$ are $\rC^2$-diffeomorphisms for all $\tau \in [0,T']$, and the interaction problem \eqref{eq:LC-fluid equationsiso}--\eqref{eq:initial} admits a unique solution
\begin{equation*}
    \begin{aligned}
        u^{\cF} &\in \rW^{1,p}\left(0,T';\rL^q(\cF(\cdot))^3\right) \cap \rL^p\left(0,T';\rW^{2,q}(\cF(\cdot))^3\right), \enspace \pi^{\cF} \in \rL^p\left(0,T';\rW^{1,q}(\cF(\cdot)) \cap \rL_0^q(\cF(\cdot))\right),\\
        \ovd &\in \rW^{1,p}\left(0,T';\rL^q(\cF(\cdot))^3\right) \cap \rL^p\left(0,T';\rW^{2,q}(\cF(\cdot))^3\right),\\
        h' &\in \rW^{1,p}(0,T')^3, \enspace h \in \rL^\infty(0,T')^3, \tand \Omega \in \rW^{1,p}(0,T')^3.
    \end{aligned}
\end{equation*}
\end{thm}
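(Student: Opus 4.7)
The plan is a contraction mapping argument carried out on the fixed reference configuration $\cF_0$. First I would pull back the entire system \eqref{eq:LC-fluid equationsiso}--\eqref{eq:initial} from the moving domain via the local diffeomorphism of Inoue-Wakimoto, which coincides with the rigid motion $y \mapsto h(t) + Q(t)y$ in a neighbourhood of $\cS_0$ and with the identity outside a slightly larger neighbourhood disjoint from $\del\cO$; the hypothesis $\dist(\cS_0,\del\cO) > r$ ensures such a collar exists for short times. This transform preserves solenoidality, maps $\rW^{k,q}$ to $\rW^{k,q}$ with constants depending only on suitable $\rC^1(\rC^2)$-norms of $X$, and produces lower-order and variable-coefficient perturbations that vanish when $(h,Q) = (0,\Id)$.

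Next I would isolate the linearization at $(h,Q) = (0,\Id)$. The transformed director equation reduces to a heat equation on $\cF_0$ with homogeneous Neumann boundary condition, which is decoupled at the linear level from the fluid-rigid body block. The latter is precisely the block addressed by \cite[Theorem~4.1]{GGH:13}, yielding maximal $\rL^p$-regularity on $[0,T']$, and the Neumann Laplacian on a bounded $\rC^3$-domain enjoys maximal $\rL^p(\rL^q)$-regularity by classical theory. Combining the two through the upper triangular structure, and absorbing the variable-coefficient perturbation by a standard short-time argument, produces maximal regularity of the full linearization $\cL$ on a suitably chosen space adapted to the compatibility conditions encoded in $\rX_\gamma$.

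I would then rewrite the transformed interaction problem as $\cL z = \cN(z)$ with prescribed trace $z(0) = z_0$ and solve it via Banach's fixed point theorem on a small ball in
\begin{equation*}
    \rE_{T'} = \{z = (v,d,\ell,\omega) : v, d \in \rW^{1,p}(0,T';\rL^q(\cF_0)^3) \cap \rL^p(0,T';\rW^{2,q}(\cF_0)^3), \enspace \ell, \omega \in \rW^{1,p}(0,T')^3\},
\end{equation*}
centred at the solution of the corresponding linear problem with data $z_0$. The operator $\cN$ collects the transport terms $(v\cdot\nabla)v$ and $(v\cdot\nabla)d$, the nonlinearity $\lambda|\nabla d|^2 d$, the Ericksen stress $\mdiv(\nabla d (\nabla d)^\top)$, all additional terms generated by the pull-back, and the boundary integrals in Newton's law. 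The required Lipschitz estimates rely on the mixed embedding
\begin{equation*}
    \rW^{1,p}(0,T';\rL^q(\cF_0)) \cap \rL^p(0,T';\rW^{2,q}(\cF_0)) \hookrightarrow \rC([0,T'];\rB_{qp}^{2-\nicefrac{2}{p}}(\cF_0)) \hookrightarrow \rC([0,T'];\rC^1(\overline{\cF_0})),
\end{equation*}
which is exactly \eqref{eq:cond p and q}, combined with H\"older inequalities in time to extract a small factor $T'{}^{\alpha}$. The transform-induced contributions are controlled by estimating $X - \Id$ and $X^{-1} - \Id$ in $\rC^1(\rC^2)$ through $\|(\ell,\omega)\|_{\rW^{1,p}}$ with a positive power of $T'$, and the diffeomorphism property of $X(\tau,\cdot)$, together with the no-collision condition, is preserved on $[0,T']$ by further reducing $T'$ so that $h(t)$ and $Q(t) - \Id$ stay small.

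The main obstacle I expect is the boundary integral $\int_{\del\cS_0} \nabla d (\nabla d)^\top \nu \rd\Gamma$ appearing in the pulled-back equations \eqref{eq:rigid body eqs intro}: it is quadratic in $\nabla d$ on a codimension-one surface, and establishing its Lipschitz dependence on $d$ in the maximal regularity norm of $(\ell,\omega)$ is precisely what forces the assumption $\nicefrac{2}{p} + \nicefrac{3}{q} < 1$, via the $\rC^1$-embedding above and a trace-type estimate. Once this, together with the analogous Ericksen-stress estimate for $\mdiv(\nabla d (\nabla d)^\top)$ in $\rL^p(\rL^q)$, is in place, all other nonlinear terms are handled by routine product estimates, and uniqueness follows from the same contraction.
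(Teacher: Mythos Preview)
Your overall strategy matches the paper's: transform to $\cF_0$ via the Inoue--Wakimoto diffeomorphism, exploit the upper-triangular structure (Neumann heat equation for $d$, then the fluid--rigid block of \cite[Theorem~4.1]{GGH:13}), center at a reference solution, and close by contraction using the embedding $\rB_{qp}^{2-\nicefrac{2}{p}}\hookrightarrow\rC^1$. The surface integral you flag is indeed the place where \eqref{eq:cond p and q} enters most sharply.

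There is, however, one genuine gap. You place the full Ericksen stress $\mdiv(\nabla d(\nabla d)^\top)$ in $\cN$ and state that the linearized $d$-equation is ``decoupled at the linear level from the fluid--rigid body block.'' Writing $d=\hat d+d^*$ with $d^*$ the reference solution and $\hat d(0)=0$, the Ericksen stress expands (in the notation $B(d)h$ of \eqref{eq:def B}) as
\[
B(d)d=B(\hat d)\hat d+B(\hat d)d^*+B(d^*)\hat d+B(d^*)d^*.
\]
For the Lipschitz estimate the dangerous piece is $B(d^*)\hat d$: one has
\[
\|B(d^*)(\hat d_1-\hat d_2)\|_{\rL^p(\rL^q)}\le C\,\|\nabla d^*\|_{\rL^\infty_{t,x}}\,\|\hat d_1-\hat d_2\|_{\rL^p(\rW^{2,q})},
\]
and $\|\nabla d^*\|_{\rL^\infty_{t,x}}$ is controlled only by $\|d_0\|_{\rX_\gamma^d}$, which is \emph{not} small and cannot be made small by shrinking $T'$; the second factor is top order, so no H\"older-in-time gain is available either. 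Hence with your linearization the contraction constant contains a factor of order $\|d_0\|_{\rX_\gamma^d}$ that you cannot beat. The paper resolves this by moving $B(d^*)\hat d$ into the \emph{linear} operator (see \eqref{eq:linearization loc}); the system stays upper-triangular (solve for $\hat d$ first, then feed $B(d^*)\hat d$ as a forcing into the fluid block), and \autoref{prop:max reg hom IVs} shows the resulting maximal regularity constant is finite and $T$-independent. What remains on the right, namely $\cQ(d^*,\hat d)=B(d^*)\hat d-\cB(d)d$, then carries only factors $R$, $C_T^*$, or $T^{\nicefrac{1}{p}}$, which do close (\autoref{lem:nonlinear ests Q}). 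Your phrase ``absorbing the variable-coefficient perturbation by a standard short-time argument'' does not cover this, because the coefficient $\nabla d^*$ does not vanish as $T'\to 0$.
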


\begin{rem}\label{rem:notion strong solution and forcing terms}
\begin{enumerate}[(a)]
    \item We refer to a solution in the regularity class as in \autoref{thm:local wp} as a \textit{strong solution}.
    \item We can reproduce a similar local-in-time result for forcing terms.
    For instance, one can consider external forces and torques in the equations accounting for the motion of the rigid body \eqref{eq:rigid body eqs intro}.
    An analogous result as \autoref{thm:local wp} is then obtained if the forcing terms are in $\rL^p(0,T)^3$.
    \item Let us observe that the uniqueness of the pressure is a consequence of the choice of the intersection with $\rL_0^q(\cF(\cdot))$ in the spatial component.
\end{enumerate}    
\end{rem}

It is easy to verify that $z_* = (0,d_*,0,0)$, with $|d_*| = 1$ and $d_*$ constant is a stationary solution of the interaction problem \eqref{eq:LC-fluid equationsiso}--\eqref{eq:initial}.
The second main result in this paper asserts the global strong well-posedness of the interaction problem for initial data close to such constant equilibria.

\begin{thm}\label{thm:global wp close to equilibria}
Let $p,q \in (1,\infty)$ be such that \eqref{eq:cond p and q} holds true, let $\cO \subset \R^3$ be a bounded domain of class $\rC^3$, and consider the domains $\cS_0$ and $\cF_0$ occupied by the rigid body and the fluid at time zero.
Moreover, assume that $\dist(\cS_0,\partial \cO) > r$ for some $r > 0$, and let $d_* \in \R^3$ constant with $|d_*| = 1$.
Then there is $\eta_0 > 0$ such that for all $\eta \in (0,\eta_0)$, there are two constants $\delta_0 > 0$ and $C > 0$ such that for all $\delta \in (0,\delta_0)$ and for all $z_0 = (v_0,d_0,\ell_0,\omega_0) \in \rX_\gamma$, with $\rX_\gamma$ as in \eqref{eq:trace space}, $|d_0| = 1$ and $\| (v_0, d_0 - d_*, \ell_0,\omega_0) \|_{\rX_\gamma} \le \delta$, the interaction problem \eqref{eq:LC-fluid equationsiso}--\eqref{eq:initial} admits a unique strong solution $(u^\cF,\pi^{\cF},\ovd,h',\Omega)$ such that
\begin{equation*}
    \begin{aligned}
        &\quad \left\| \mre^{\eta(\cdot)} u^\cF \right\|_{\rL^p(0,\infty;\rW^{2,q}(\cF(\cdot)))} + \left\| \mre^{\eta(\cdot)} \partial_t u^\cF \right\|_{\rL^p(0,\infty;\rL^q(\cF(\cdot)))} + \left\| \mre^{\eta(\cdot)} u^\cF \right\|_{\rL^\infty\left(0,\infty;\rB_{qp}^{2-\nicefrac{2}{p}}(\cF(\cdot))\right)}\\
        &+ \left\| \mre^{\eta(\cdot)} \pi^\cF \right\|_{\rL^p(0,\infty;\rW^{1,q}(\cF(\cdot)) \cap \rL_0^q(\cF(\cdot)))} + \left\| \mre^{\eta(\cdot)} \nabla \ovd \right\|_{\rL^p(0,\infty;\rL^q(\cF(\cdot)))} + \left\| \mre^{\eta(\cdot)} \nabla^2 \ovd \right\|_{\rL^p(0,\infty;\rL^q(\cF(\cdot)))}\\
        &+ \left\| \mre^{\eta(\cdot)} \partial_t \ovd \right\|_{\rL^p(0,\infty;\rL^q(\cF(\cdot)))} + \left\| \ovd - d_* \right\|_{\rL^\infty\left(0,\infty;\rB_{qp}^{2-\nicefrac{2}{p}}(\cF(\cdot))\right)} + \left\| \mre^{\eta(\cdot)} h' \right\|_{\rW^{1,p}(0,\infty)}
        + \left\| h \right\|_{\rL^\infty(0,\infty)}\\
        &+ \left\| \mre^{\eta(\cdot)} \Omega \right\|_{\rW^{1,p}(0,\infty)} \le C \delta.
    \end{aligned}
\end{equation*}
In addition, we have $\dist(\cS(t),\partial \cO) > \nicefrac{r}{2}$ for all $t \in [0,\infty)$.
\end{thm}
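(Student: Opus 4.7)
The plan is to adapt the fixed-point argument from the proof of \autoref{thm:local wp} to the half line, using an exponential weight $\mre^{\eta t}$ to produce the claimed decay. I first transfer the system to the fixed reference domain $\cF_0$ via the Inoue--Wakimoto coordinate change from \autoref{sec:change of var}, substitute $e \coloneqq \ovd - d_*$ while keeping $u^\cF, \pi^\cF, h', \Omega$ as independent variables, and conjugate the resulting equations by $\mre^{\eta t}$. This replaces every linear generator by a shift of itself by $\eta \Id$; for $\eta < \eta_0$ chosen smaller than the spectral gap of the Maity--Tucsnak fluid--structure semigroup and the spectral gap of the Neumann Laplacian on mean-zero functions, the maximal $\rL^p([0,\infty))$-regularity up to a shift established in \autoref{sec:lin theory} becomes unshifted maximal regularity for every block of the linearised problem save for the average of the director.

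The core new ingredient is the director splitting forced by the constant kernel of the Neumann Laplacian. I decompose $e = e^\perp + \overline{e}$ with $\overline{e}(t) = |\cF_0|^{-1}\int_{\cF_0} e(t,\cdot)\rd x$ and $\int_{\cF_0} e^\perp(t,\cdot)\rd x = 0$. On the mean-zero component $e^\perp$ the Neumann Laplacian has a positive spectral gap, so weighted maximal $\rL^p([0,\infty))$-regularity holds genuinely and produces the exponential decay of $\nabla \ovd, \nabla^2 \ovd, \partial_t \ovd$ appearing in the statement. The average $\overline{e}$ satisfies a three-dimensional ODE whose right-hand side consists only of the (coordinate-change-modified) nonlinearities, which are quadratic in the full unknown; once each such term is paired with an $\mre^{\eta t}$-weighted factor from another block, it becomes integrable in time, so $\overline{e}$ stays merely in $\rL^\infty(0,\infty)$. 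This is precisely the unweighted bound on $\ovd - d_*$ claimed by the theorem. Compatibility with the sphere constraint $|\ovd| = 1$, preserved by \autoref{sec:dir cond energy equilibria}, is automatic since it forces $d_* \cdot e = -\tfrac{1}{2}|e|^2$.

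With the linear bounds in hand, I set up the contraction map $\Phi$ that sends a tuple $\tilde z$ to the solution of the linearised system driven by $N(\tilde z)$, where $N$ gathers the convective terms, the Ericksen stress $\mdiv(\nabla \ovd (\nabla \ovd)^\top)$, the penalty $\lambda |\nabla \ovd|^2 \ovd$, the nonlinear rigid-body contributions, and the corrections generated by the Inoue--Wakimoto transform. Each such term is at least quadratic in $\tilde z$, and the products can be arranged so that one factor carries $\mre^{\eta t}$ while the other is an $\rL^\infty$-bounded quantity such as $\overline{e}$, $h$, or $Q - \Id$; the weighted-norm estimates from \autoref{sec:proof local well-posedness} then upgrade to $\|N(\tilde z)\|_* \le C \|\tilde z\|_{Z_\infty}^2$ on the solution space $Z_\infty$ defined by the norms in the statement, and a standard contraction argument on the ball of radius $C\delta$ closes the fixed-point problem once $\delta_0$ is sufficiently small. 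The no-collision claim follows from $\|\mre^{\eta \cdot} h'\|_{\rW^{1,p}(0,\infty)} \le C\delta$, which by H\"older against $\mre^{-\eta \cdot}$ gives $\|h\|_{\rL^\infty(0,\infty)} \le C\delta$, so the choice $C\delta < \nicefrac{r}{2}$ preserves $\dist(\cS(t), \del \cO) > \nicefrac{r}{2}$ for all $t \ge 0$ and keeps the transform a diffeomorphism. The hardest step will be closing the splitting consistently: the average $\overline{e}$ sits outside the weighted maximal-regularity framework yet must be controlled in $\rL^\infty$ through its ODE by the weighted norms of everything else, and this has to be arranged simultaneously with the coordinate-change estimates, which themselves depend on the global no-collision bound; resolving this mild circularity by fixing $\delta_0$ only after all constants have been determined is the delicate point.
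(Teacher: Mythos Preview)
Your proposal is correct and follows essentially the same route as the paper: transform to the fixed domain, split the director into its mean-zero part (for which the Neumann Laplacian has a spectral gap, yielding genuine weighted maximal $\rL^p([0,\infty))$-regularity) and its spatial average (controlled only in $\rL^\infty$ via its ODE, driven by the quadratic nonlinearities), then close a contraction in an exponentially weighted ball and verify no-collision a posteriori from $\|h\|_{\rL^\infty} \le C\delta$. The only minor clarification is that the ``circularity'' you flag is not really present in the paper's argument: the transform estimates (your Inoue--Wakimoto corrections being effectively quadratic) follow directly from the smallness of $(\ell,\omega)$ in the fixed-point ball via \autoref{lem:ests trafo global}, independently of the no-collision bound, which is then a consequence rather than an input.
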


The following result discusses the convergence of the solution to the equilibria in the trace space at an exponential rate.
It is an immediate consequence of \autoref{thm:global wp close to equilibria}.

\begin{cor}\label{cor:exp conv to equilibira}
Under the assumptions of \autoref{thm:global wp close to equilibria}, for a constant $C > 0$ independent of $t > 0$, the solution $(u^\cF,\pi^{\cF},\ovd,h',\Omega)$ to the interaction problem \eqref{eq:LC-fluid equationsiso}--\eqref{eq:initial} especially satisfies
\begin{equation*}
    \left\| u^{\cF}(t,\cdot) \right\|_{\rB_{qp}^{2-\nicefrac{2}{p}}(\cF(t))} + \| h'(t) \|_{\R^3} + \| \Omega(t) \|_{\R^3} \le C \delta \mre^{-\eta t}, \tforall t > 0.
\end{equation*}
\end{cor}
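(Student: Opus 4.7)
The plan is to read off the three pointwise decay bounds directly from the weighted global estimate supplied by \autoref{thm:global wp close to equilibria}, using only the embedding of the relevant maximal regularity class into continuous functions.

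First I would handle $u^\cF$. The global estimate contains the term $\|\mre^{\eta(\cdot)} u^\cF\|_{\rL^\infty(0,\infty;\rB_{qp}^{2-\nicefrac{2}{p}}(\cF(\cdot)))} \le C\delta$. Since the regularity class
\begin{equation*}
\rW^{1,p}(0,\infty;\rL^q(\cF(\cdot))) \cap \rL^p(0,\infty;\rW^{2,q}(\cF(\cdot))) \hookrightarrow \rC_b\bigl([0,\infty);\rB_{qp}^{2-\nicefrac{2}{p}}(\cF(\cdot))\bigr),
\end{equation*}
the essential-supremum bound is in fact a genuine pointwise bound, which after moving the weight to the other side gives $\|u^\cF(t,\cdot)\|_{\rB_{qp}^{2-\nicefrac{2}{p}}(\cF(t))} \le C\delta\,\mre^{-\eta t}$ for every $t > 0$.

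Next I would treat $h'$ and $\Omega$ by the same argument. Setting $g(t) \coloneqq \mre^{\eta t} h'(t)$, the theorem gives $g \in \rW^{1,p}(0,\infty;\R^3)$ with $\|g\|_{\rW^{1,p}(0,\infty)} \le C\delta$. Since $p \in (1,\infty)$, the one-dimensional Sobolev embedding $\rW^{1,p}(0,\infty) \hookrightarrow \rC_0([0,\infty))$ yields a continuous representative satisfying $\|g\|_{\rL^\infty(0,\infty)} \le C\|g\|_{\rW^{1,p}(0,\infty)}$. Combining the two inequalities and dividing by $\mre^{\eta t}$ delivers $\|h'(t)\|_{\R^3} \le C\delta\,\mre^{-\eta t}$ for all $t > 0$, and an identical argument applied to $\mre^{\eta t}\Omega(t)$ produces the bound for $\Omega$.

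There is no serious obstacle here: the corollary is a pure consequence of embedding Sobolev-type spaces in time into bounded continuous functions, combined with unpacking the definition of the exponentially weighted norms already established in \autoref{thm:global wp close to equilibria}. If any subtlety needs to be verified, it is only the trace-in-time embedding for the maximal regularity class for $u^\cF$ (which is standard, see e.g. the theory of maximal $\rL^p$-regularity giving continuity in the real-interpolation space $\rB_{qp}^{2-\nicefrac{2}{p}}$), and the fact that the one-dimensional Sobolev embedding into continuous functions is available uniformly on the half-line for exponent $p > 1$.
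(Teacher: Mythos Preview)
Your proposal is correct and matches the paper's approach: the paper does not give a separate proof but simply declares the corollary ``an immediate consequence'' of \autoref{thm:global wp close to equilibria}, and your argument is exactly the natural way to unpack that claim. The only thing worth noting is that for $u^\cF$ you do not even need the trace-in-time embedding, since the weighted $\rL^\infty$-in-time bound on the $\rB_{qp}^{2-\nicefrac{2}{p}}$ norm is already part of the conclusion of the theorem; the Sobolev embedding $\rW^{1,p}(0,\infty)\hookrightarrow\rL^\infty(0,\infty)$ for $h'$ and $\Omega$ is the only additional ingredient.
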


\section{The director condition, energy and equilibria}\label{sec:dir cond energy equilibria}

We verify that the director part $\ovd$ of a strong solution to the interaction problem \eqref{eq:LC-fluid equationsiso}--\eqref{eq:initial} always satisfies the condition $|\ovd(t,x)|=1$ in \autoref{ssec:dir cond}.
\autoref{ssec:energy} is dedicated to showing that the energy of the interaction system is non-increasing, and to determining the set of equilibrium solutions. 

\subsection{Preservation of the director condition}\label{ssec:dir cond}
\

Due to the physical interpretation of the director $\ovd$, it is natural to include the condition 
\begin{equation}\label{cond:d}
    |\ovd(t,x)|=1, \tforall (t,x)\in \cF_T.
\end{equation}
Assuming $|d_0|=1$, we can prove that \eqref{cond:d} is preserved in view of the equation \eqref{eq:LC-fluid equationsiso}$_3$ satisfied by $\ovd$.

\begin{prop}\label{prop:preservation of the dir cond}
Let $p,q \in (1,\infty)$ satisfy \eqref{eq:cond p and q}, and let $z_0 =(v_0,d_0,\ell_0,\omega_0) \in \rX_\gamma$, for $\rX_\gamma$ as introduced in \eqref{eq:trace space}, be such that $|d_0| = 1$.
Moreover, let $(u^{\cF},\pi^{\cF},\ovd,h',\Omega)$ be a solution to \eqref{eq:LC-fluid equationsiso}--\eqref{eq:initial} satisfying
\begin{equation*}
    (u^{\cF},\ovd)\in \rW^{1,p}\left(0,T;\rL^q(\cF(\cdot))^3\right) \cap \rL^p\left(0,T;\rW^{2,q}(\cF(\cdot))^3\right)\times \rW^{1,p}\left(0,T;\rL^q(\cF(\cdot))^3\right) \cap \rL^p\left(0,T;\rW^{2,q}(\cF(\cdot))^3\right).
\end{equation*}
Then $\left|\ovd(t,x)\right|\equiv 1$ holds for all $(t,x)\in \cF_T$.
\end{prop}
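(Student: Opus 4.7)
The natural approach is to derive a scalar parabolic equation for the deviation $\varphi \coloneqq |\ovd|^2 - 1$ and to show by an energy estimate that $\varphi \equiv 0$. Taking the inner product of the director equation \eqref{eq:LC-fluid equationsiso}$_3$ with $2\ovd$ and using the pointwise identity
\begin{equation*}
    \Delta |\ovd|^2 = 2\,\ovd \cdot \Delta \ovd + 2|\nabla \ovd|^2
\end{equation*}
yields, after cancellation of the $|\nabla \ovd|^2$-terms on both sides (the right-hand side producing $2\lambda|\nabla\ovd|^2|\ovd|^2$, from which one subtracts $2\lambda|\nabla\ovd|^2$),
\begin{equation*}
    \gamma \partial_t \varphi + \gamma (u^{\cF}\cdot \nabla)\varphi - \lambda \Delta \varphi = 2\lambda |\nabla \ovd|^2 \varphi \tin \cF_T.
\end{equation*}
The initial condition $|d_0| = 1$ gives $\varphi(0,\cdot) \equiv 0$ in $\cF_0$, and the Neumann condition $\partial_\nu \ovd = 0$ from \eqref{eq:bdry cond} on both components of $\del\cF(t)$ yields $\partial_\nu \varphi = 2\ovd\cdot\partial_\nu\ovd = 0$ on $(0,T)\times\del\cO$ and on $\del\cS_T$.

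The next step is to multiply this equation by $\varphi$ and integrate over the moving domain $\cF(t)$. For the transport part, one applies the Reynolds transport theorem to $\int_{\cF(t)}\varphi^2\rd x$ and combines it with the convective term, written after $\mdiv u^{\cF}=0$ as a boundary integral; here the crucial cancellation is that on $\partial\cS(t)$ the fluid velocity satisfies $u^{\cF}=u^{\cS}$ (the very velocity at which $\partial\cS(t)$ moves), while on $\partial\cO$ we have $u^{\cF}=0$. Therefore the two boundary contributions cancel, leaving
\begin{equation*}
    \gamma \int_{\cF(t)} \varphi\left(\partial_t \varphi + (u^{\cF}\cdot \nabla)\varphi\right) \rd x = \frac{\gamma}{2}\frac{\rd}{\rd t} \int_{\cF(t)} \varphi^2 \rd x.
\end{equation*}
Integration by parts for the diffusion term produces $\lambda \|\nabla\varphi\|_{\rL^2(\cF(t))}^2$ with no boundary residue, thanks to $\partial_\nu\varphi = 0$. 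The resulting inequality reads
\begin{equation*}
    \frac{\gamma}{2}\frac{\rd}{\rd t}\|\varphi(t)\|_{\rL^2(\cF(t))}^2 + \lambda\|\nabla\varphi(t)\|_{\rL^2(\cF(t))}^2 \le 2\lambda \|\nabla\ovd(t,\cdot)\|_{\rL^\infty(\cF(t))}^2 \, \|\varphi(t)\|_{\rL^2(\cF(t))}^2.
\end{equation*}

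To close the argument, one needs $\|\nabla\ovd\|_{\rL^\infty(\cF(t))}^2$ to be integrable in time. The Sobolev embedding $\rW^{1,q}\hookrightarrow \rL^\infty$ (valid since \eqref{eq:cond p and q} forces $q>3$) together with the maximal regularity class $\ovd \in \rL^p(0,T;\rW^{2,q}(\cF(\cdot)))$ yields $\nabla\ovd\in\rL^p(0,T;\rL^\infty(\cF(\cdot)))$; since \eqref{eq:cond p and q} also gives $p>2$, the square lies in $\rL^{p/2}(0,T)\subset \rL^1(0,T)$. Grönwall's lemma applied with $\varphi(0)\equiv 0$ then forces $\|\varphi(t)\|_{\rL^2(\cF(t))}=0$ for all $t\in[0,T]$, which is the desired conclusion.

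\textbf{Main obstacle.} The only delicate point is the correct bookkeeping on the moving domain: one has to verify that the boundary integral produced by the Reynolds transport theorem exactly cancels the boundary term generated by integrating the transport term by parts. This cancellation is a direct consequence of the rigid contact condition $u^{\cF}=u^{\cS}$ on $\partial\cS(t)$ and of $u^{\cF}=0$ on $\partial\cO$, so it is conceptually immediate once set up; alternatively, one could pull the equation back to the fixed domain $\cF_0$ via the diffeomorphism $X(t,\cdot)$ supplied by \autoref{thm:local wp}, where the same computation becomes a straightforward energy estimate on a fixed domain with a modified divergence-free transport field.
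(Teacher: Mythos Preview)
Your proof is correct and follows essentially the same route as the paper. The only cosmetic difference is that the paper tests the vector director equation directly with $2\ovd(|\ovd|^2-1)$ and arrives at the identity
\[
\frac{\gamma}{2}\frac{\rd}{\rd t}\int_{\cF(t)} |\varphi|^2 \rd x + \lambda\int_{\cF(t)} |\nabla\varphi|^2 \rd x = 2\lambda\int_{\cF(t)} |\nabla \ovd|^2 |\varphi|^2 \rd x
\]
in one pass, whereas you first derive the scalar equation for $\varphi$ by testing with $2\ovd$ and then multiply by $\varphi$; the resulting energy identity and the closing Gronwall argument are identical.
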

\begin{proof}
Let us multiply  \eqref{eq:LC-fluid equationsiso}$_3$ by $2\ovd(|\ovd|^2-1)$ and integrate over $\cF(t)$ so that
\begin{equation}\label{d:mult}
    \int\limits_{\cF(t)}\left(\gamma\left[{\partial_t \ovd} + (u^{\cF}\cdot \nabla) \ovd\right] - \lambda\Delta \ovd  - \lambda |\nabla \ovd|^2 \ovd\right)\cdot 2\ovd\left(|\ovd|^2-1\right) \rd x=0.
\end{equation}
\noindent{\textit{ First term of \eqref{d:mult}.}} 
We use integration by parts and the Reynolds transport theorem to obtain
\begin{equation}\label{eq:T1}
    \begin{aligned}
        &\quad \int\limits_{\cF(t)} \partial_t \ovd\cdot 2\ovd\left(|\ovd|^2-1\right)\rd x = \int\limits_{\cF(t)} \partial_t \ovd\cdot 2\ovd |\ovd|^2\rd x - \int\limits_{\cF(t)} \partial_t \ovd\cdot 2\ovd\rd x \\
        &= \int\limits_{\cF(t)} \frac{1}{2}\partial_t \left(|\ovd|^2|\ovd|^2\right)\rd x - \int\limits_{\cF(t)}\partial_t \left(|\ovd|^2\right)\rd x\\
        &=\frac{1}{2}\frac{\mathrm{d}}{\mathrm{d}t}\int\limits_{\cF(t)}\left(|\ovd|^2|\ovd|^2\right)\rd x-\frac{1}{2}\int\limits_{\del \cF(t)}  \left(|\ovd|^2|\ovd|^2\right) \left(u^{\cF}\cdot n\right)\rd \Gamma - \frac{\mathrm{d}}{\mathrm{d}t}\int\limits_{\cF(t)} |\ovd|^2 \rd x + \int\limits_{\del \cF(t)} |\ovd|^2\left(u^{\cF}\cdot n\right)\rd \Gamma\\
        &= \frac{\mathrm{d}}{\mathrm{d}t}\int\limits_{\cF(t)} \left(\frac{1}{2}\left(|\ovd|^2|\ovd|^2\right) - |\ovd|^2 +\frac{1}{2}\right)\rd x -\frac{1}{2}\int\limits_{\del\cF(t)}  (|\ovd|^2|\ovd|^2) \left(u^{\cF}\cdot n\right)\rd \Gamma + \int\limits_{\del\cF(t)} |\ovd|^2\left(u^{\cF}\cdot n\right)\rd \Gamma.
    \end{aligned}
\end{equation}
\noindent{\textit{ Second term of \eqref{d:mult}.}} We use $\mdiv u^{\cF}=0\ \mbox{ in }  \cF(t)$ and integrate by parts again to get
\begin{equation}\label{eq:T2}
    \begin{aligned}
        \int\limits_{\cF(t)} \left(u^{\cF}\cdot \nabla\right) \ovd \cdot 2\ovd\left(|\ovd|^2-1\right)\rd x 
        &= \int\limits_{\cF(t)} 2\left(u^{\cF}\cdot \nabla\right) \ovd \cdot \ovd |\ovd|^2\rd x - 2\int\limits_{\cF(t)} \left(u^{\cF}\cdot \nabla\right) \ovd \cdot \ovd\rd x \\
        &= \int\limits_{\cF(t)} (u^{\cF}\cdot \nabla) |\ovd|^2 |\ovd|^2\rd x - \int\limits_{\cF(t)} (u^{\cF}\cdot \nabla) |\ovd|^2\rd x\\
        &= \frac{1}{2}\int\limits_{\del\cF(t)}  (|\ovd|^2|\ovd|^2) (u^{\cF}\cdot n)\rd \Gamma - \int\limits_{\del\cF(t)} |\ovd|^2(u^{\cF}\cdot n)\rd \Gamma.
    \end{aligned}
\end{equation}
\noindent{\textit{Third term of \eqref{d:mult}.}} Another integration by parts leads to
\begin{equation}\label{eq:T3}
    \begin{aligned}
        -2\lambda\int\limits_{\cF(t)} \Delta \ovd \cdot \ovd\left(|\ovd|^2-1\right)\rd x 
        &= 2\lambda\int\limits_{\cF(t)} \nabla \ovd\cdot \nabla \left[\ovd\left(|\ovd|^2-1\right)\right]\rd x \\
        &= 2\lambda\int\limits_{\cF(t)} |\nabla\ovd|^2 \left(|\ovd|^2-1\right)\rd x + 4\lambda\int\limits_{\cF(t)} |\ovd|^2|\nabla\ovd|^2\rd x.
    \end{aligned}
\end{equation}
\noindent{\textit{Fourth term of \eqref{d:mult}.}} We calculate
\begin{equation}\label{eq:T4}
    - 2\lambda\int\limits_{\cF(t)} |\nabla \ovd|^2 |\ovd|^2(|\ovd|^2-1)\rd x= - 2\lambda\int\limits_{\cF(t)} |\nabla \ovd|^2 \left(|\ovd|^2-1\right)(|\ovd|^2-1)\rd x - 2\lambda\int\limits_{\cF(t)} |\nabla \ovd|^2 (|\ovd|^2-1)\rd x.
\end{equation}
Adding the relations \eqref{eq:T1}--\eqref{eq:T4}, we deduce from the equation \eqref{d:mult} that
\begin{equation}\label{eq:T5}
    \frac{\mathrm{d}}{\mathrm{d}t}\int\limits_{\cF(t)} \gamma\left(\frac{1}{2}\left(|\ovd|^2|\ovd|^2\right) - |\ovd|^2 +\frac{1}{2}\right)\rd x + 4\lambda\int\limits_{\cF(t)} |\ovd|^2|\nabla\ovd|^2\rd x - 2\lambda\int\limits_{\cF(t)} |\nabla \ovd|^2 ||\ovd|^2-1|^2\rd x=0.
\end{equation}
Setting $\varphi:= |\ovd|^2-1$, we derive that
\begin{equation}\label{eq:T6}
|\varphi|^2= |\ovd|^2|\ovd|^2 - 2|\ovd|^2 +{1}, \tand |\nabla\varphi|^2=4|\ovd|^2|\nabla \ovd|^2.
\end{equation}
By virtue of \eqref{eq:T6}, the equation \eqref{eq:T5} can be rewritten as 
\begin{equation*}
    \frac{\gamma}{2}\frac{\mathrm{d}}{\mathrm{d}t}\int\limits_{\cF(t)} |\varphi|^2\rd x + \lambda\int\limits_{\cF(t)} |\nabla\varphi|^2\rd x = 2\lambda\int\limits_{\cF(t)} |\nabla \ovd|^2 |\varphi|^2\rd x.
\end{equation*}
As $\ovd\in \rW^{1,p}\left(0,T;\rL^q(\cF(\cdot))^3\right) \cap \rL^p\left(0,T;\rW^{2,q}(\cF(\cdot))^3\right)$ and $\gamma,\lambda >0$, for
\begin{equation*}
    \zeta(t):=\int\limits_{\cF(t)} |\varphi (t,x)|^2\rd x,
\end{equation*}
we find the inequality $\zeta'(t)\le C\zeta(t)$ for some $C > 0$.
As $|d_0|=1$, we obtain $\zeta(0)=0$, so Gronwall's inequality yields $\zeta=0$. 
This implies $\varphi=0$, and we conclude $\left|\ovd(t,x)\right|\equiv 1$ for all $(t,x)\in \cF_T$.
\end{proof}

\subsection{Energy and equilibria of the interaction problem}\label{ssec:energy}
\

Let us consider the energy of the interaction system \eqref{eq:LC-fluid equationsiso}--\eqref{eq:initial} defined by
\begin{equation}\label{energy}
    \begin{aligned}
        \rE(t)&\coloneqq \rE_{\mathrm{kin}}(t) + \rE_{\mathrm{pot}}(t)+\rE_{\mathrm{trans}}(t)+\rE_{\mathrm{rot}}(t) \\ &\coloneqq \frac{1}{2}\int\limits_{\cF(t)} \left|u^{\cF}(t,x)\right|^2\rd x +  \frac{1}{2}\int\limits_{\cF(t)} \left|\nabla \ovd (t,x)\right|^2\rd x+ \frac{1}{2} \mS |h'(t)|^2+ \frac{1}{2} (J\Omega\cdot\Omega)(t).
\end{aligned}
\end{equation}
We now show that $\rE(t)$ is non-increasing and determine the set of equilibria $\cE$ of \eqref{eq:LC-fluid equationsiso}--\eqref{eq:initial}.

\begin{prop}\label{prop:shape of equilibria}
Let $p,q \in (1,\infty)$ satisfy \eqref{eq:cond p and q} and $z_0 = (v_0,d_0,\ell_0,\omega_0) \in \rX_\gamma$, with $|d_0| = 1$ and $\rX_\gamma$ as defined in \eqref{eq:trace space}.
Moreover, let $(u^{\cF},\pi^{\cF},\ovd,h',\Omega)$ be a solution to \eqref{eq:LC-fluid equationsiso}--\eqref{eq:initial} satisfying
\begin{equation*}
    \begin{aligned}
        (u^{\cF},\pi^{\cF},\ovd,h',\Omega) &
        \in \rW^{1,p}\left(0,T;\rL^q(\cF(\cdot))^3\right) \cap \rL^p\left(0,T;\rW^{2,q}(\cF(\cdot))^3\right) \times \rL^p\left(0,T;\rW^{1,q}(\cF(\cdot)) \cap \rL_0^q(\cF(\cdot))\right)\\
        &\quad \times \rW^{1,p}\left(0,T;\rL^q(\cF(\cdot))^3\right) \cap \rL^p\left(0,T;\rW^{2,q}(\cF(\cdot))^3\right) \times \rW^{1,p}(0,T)^3
         \times \rW^{1,p}(0,T)^3.
    \end{aligned}
\end{equation*}
The energy functional $\rE$ defined in \eqref{energy} is non-increasing. 
Moreover, the set of equilibria is given by
\begin{equation*}
    \cE=\left\{(0,d_*,0,0) : \Delta d_*+|\nabla d_*|^2d_*=0 \tin \cF_0, \enspace \partial_{\nu}d_*=0 \ton \partial\cF_0\right\}.
\end{equation*}
\end{prop}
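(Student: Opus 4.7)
The plan is to establish a dissipative energy identity for $\rE(t)$ and to read both assertions off of it. I would test the momentum equation \eqref{eq:LC-fluid equationsiso}$_1$ against $u^{\cF}$ and the director equation \eqref{eq:LC-fluid equationsiso}$_3$ against a suitable scalar multiple of $-(\Delta \ovd + |\nabla \ovd|^2 \ovd)$, then apply the Reynolds transport theorem to convert $\int u^{\cF}\cdot \partial_t u^{\cF}$ and $\int \nabla \ovd : \nabla \partial_t \ovd$ into time derivatives of the corresponding bulk integrals, up to a boundary flux on $\partial \cS(t)$. Integration by parts in the viscous, pressure, Laplacian and Ericksen stress terms uses $\mdiv u^{\cF}=0$ together with the boundary conditions $u^{\cF}|_{\partial \cO}=0$, $u^{\cF}|_{\partial \cS(t)}=u^{\cS}$ and $\partial_\nu \ovd|_{\partial \cO\cup\partial\cS(t)}=0$, eliminating the $\partial \cO$ contribution and leaving only cubic boundary terms of the form $\int_{\partial \cS(t)}|\nabla \ovd|^2 u^{\cS}\cdot\nu$ that cancel pairwise with the Reynolds fluxes, in complete analogy with the computation in the proof of \autoref{prop:preservation of the dir cond}. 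The pointwise identities $\ovd\cdot\partial_t \ovd=0$ and $\ovd\cdot\Delta \ovd=-|\nabla \ovd|^2$, supplied by $|\ovd|\equiv 1$ from \autoref{prop:preservation of the dir cond}, collapse the right-hand sides into the perfect square $|\Delta \ovd+|\nabla \ovd|^2\ovd|^2$.

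For the remaining stress boundary integral, I would split $u^{\cS}=h'(t)+\Omega(t)\times(x-h(t))$ and insert Newton's laws \eqref{eq:rigid body eqs intro} to get $\int_{\partial \cS(t)}u^{\cS}\cdot\Sigma\nu\,\rd\Gamma = -\mS h'\cdot h'' - \Omega\cdot(J\Omega)'$; the structure $J=QJ_0Q^{\top}$ from \eqref{eq:Syl and Jab}, together with $Q'=(\Omega\times)Q$, gives $J'\Omega=\Omega\times(J\Omega)$ and hence $J'\Omega\cdot\Omega=0$, so that $\Omega\cdot(J\Omega)'=\frac{1}{2}\frac{\mathrm{d}}{\mathrm{d}t}(J\Omega\cdot\Omega)$, converting the whole flux into $-\frac{\mathrm{d}}{\mathrm{d}t}(\rE_{\mathrm{trans}}+\rE_{\mathrm{rot}})$. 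The crux of the computation is the cancellation between the Ericksen contribution $-\lambda\int u^{\cF}\cdot\mdiv(\nabla \ovd(\nabla \ovd)^{\top})$ in the momentum identity and the convective term $-\int(u^{\cF}\cdot\nabla)\ovd\cdot\Delta \ovd$ produced by the director test: after one further integration by parts (using $\mdiv u^{\cF}=0$ and $\partial_\nu\ovd=0$), both reduce to $\int \partial_i u^{\cF}_j\,\partial_j \ovd_k\,\partial_i \ovd_k\,\rd x$, and with the correctly chosen multiplier they annihilate. The net outcome is the dissipative identity
\begin{equation*}
    \frac{\mathrm{d}}{\mathrm{d}t}\rE(t) = -2\mu\int_{\cF(t)}|\D(u^{\cF})|^2\,\rd x - \frac{\lambda^2}{\gamma}\int_{\cF(t)}\bigl|\Delta \ovd + |\nabla \ovd|^2\ovd\bigr|^2\,\rd x \le 0,
\end{equation*}
which proves monotonicity.

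For the equilibrium set, stationarity forces both dissipation integrals to vanish. From $\D(u^{\cF})\equiv 0$, $u^{\cF}$ is an infinitesimal rigid motion $x\mapsto a+b\times x$ on the connected annular domain $\cF_0$; the Dirichlet condition $u^{\cF}|_{\partial \cO}=0$ upgrades this to $u^{\cF}\equiv 0$, and the interface identity $u^{\cF}=h'+\Omega\times(x-h)$ on $\partial \cS_0$ then yields $h'=0$ and $\Omega=0$. The vanishing of the second dissipation term gives exactly $\Delta d_*+|\nabla d_*|^2 d_*=0$ in $\cF_0$, while $\partial_\nu d_*=0$ on $\partial \cF_0$ is inherited from \eqref{eq:bdry cond}. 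Conversely, every quadruple $(0,d_*,0,0)$ of this form is stationary once the pressure is chosen so that $\nabla \pi^{\cF}=-\lambda\,\mdiv(\nabla d_*(\nabla d_*)^{\top})$, which is solvable because the right-hand side collapses to the gradient $-\frac{\lambda}{2}\nabla|\nabla d_*|^2$ under the harmonic-map equation and $|d_*|=1$. The principal technical obstacle is the bookkeeping for the moving-interface Reynolds fluxes: three independent source terms (fluid kinetic energy, Dirichlet energy of $\ovd$, and Ericksen stress work) each produce a cubic integral over $\partial \cS(t)$, and the whole argument hinges on seeing that these cubic terms combine to zero rather than requiring independent control.
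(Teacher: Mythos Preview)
Your proposal is correct and follows essentially the same route as the paper: test the momentum equation with $u^{\cF}$, combine with the director equation tested against $\Delta\ovd+|\nabla\ovd|^2\ovd$, use Reynolds' transport theorem together with $\mdiv u^{\cF}=0$ and the boundary conditions to cancel the $\partial\cS(t)$ fluxes, and convert the stress boundary integral into $-\tfrac{\mathrm d}{\mathrm dt}(\rE_{\mathrm{trans}}+\rE_{\mathrm{rot}})$ via \eqref{eq:rigid body eqs intro} and \eqref{eq:Syl and Jab}.

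Two minor points of comparison. First, you write the viscous dissipation as $-2\mu\int|\D(u^{\cF})|^2$, which is the form that pairs correctly with the boundary term $\int_{\partial\cS(t)}\Sigma\nu\cdot u^{\cS}$ needed for the rigid-body coupling; the paper records it as $-\int|\nabla u^{\cF}|^2$, which is consistent only after an additional Korn-type identity exploiting that $u^{\cF}|_{\partial\cS(t)}$ is a rigid motion. Your version is the cleaner one and leads to the same conclusion, since $\D(u^{\cF})=0$ plus $u^{\cF}|_{\partial\cO}=0$ still forces $u^{\cF}\equiv 0$. Second, you add the converse verification that every $(0,d_*,0,0)$ with $\Delta d_*+|\nabla d_*|^2d_*=0$ is stationary by choosing $\pi^{\cF}$ with $\nabla\pi^{\cF}=-\tfrac{\lambda}{2}\nabla|\nabla d_*|^2$; the paper omits this direction, so your argument is slightly more complete.
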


\begin{proof}
Let us first prove that $\rE(t)$ is non-increasing along the solutions. 
We use integration by parts, the Reynold's transport theorem, the equations \eqref{eq:LC-fluid equationsiso}$_1$--\eqref{eq:LC-fluid equationsiso}$_2$, and the boundary conditions \eqref{eq:bdry cond} to derive
\begin{equation}\label{Ekin}
    \begin{aligned}
    \frac{\mathrm{d}}{\mathrm{d}t}\rE_{\mathrm{kin}}(t) 
    &=   \int\limits_{\cF(t)} \partial_t u^{\cF}\cdot u^{\cF}\rd x+ \frac{1}{2}\int\limits_{\partial\cS(t)} |u^{\cF}|^2 \left(u^{\cF}\cdot n\right)\rd \Gamma \\ 
    &= -\int\limits_{\cF(t)} \left(u^{\cF}\cdot\nabla\right)u^{\cF}\cdot u^{\cF}\rd x + \int\limits_{\cF(t)}\mdiv\Sigma(u^{\cF},\pi^{\cF}, \ovd)\cdot u^{\cF}\rd x + \frac{1}{2}\int\limits_{\partial\cS(t)} |u^{\cF}|^2 \left(u^{\cF}\cdot n\right)\rd \Gamma\\ 
    &= -\int\limits_{\cF(t)} |\nabla u^{\cF}|^2\rd x + \int\limits_{\cF(t)} \nabla \ovd (\nabla\ovd)^{\top}:\nabla u^{\cF} \rd x - \int\limits_{\partial\cS(t)} \Sigma(u^{\cF},\pi^{\cF}, \ovd)n\cdot u^{\cF} \rd \Gamma.
    \end{aligned}
\end{equation}
Another integration by parts in conjunction with the Reynold's transport theorem leads to
\begin{equation}\label{Epot1}
    \begin{aligned}
        \frac{\mathrm{d}}{\mathrm{d}t}\rE_{\mathrm{pot}}(t) = \frac{1}{2}\int\limits_{\cF(t)} \partial_t |\nabla\ovd|^2\rd x + \frac{1}{2} \int\limits_{\partial\cS(t)} |\nabla\ovd|^2 \left(u^{\cF}\cdot n\right)\rd \Gamma.
  \end{aligned}
\end{equation}
By using \eqref{eq:LC-fluid equationsiso}$_3$ and $|\ovd|=1$, following from \autoref{prop:preservation of the dir cond}, we obtain the identity
\begin{equation}\label{Epot2}
    \begin{aligned}
        &\quad \int\limits_{\cF(t)} |\Delta \ovd+|\nabla \ovd|^2\ovd|^2\rd x = \int\limits_{\cF(t)} \left(\Delta \ovd+|\nabla \ovd|^2\ovd\right)\cdot \left(\partial_t\ovd + \left(u^{\cF}\cdot\nabla\right)\ovd\right)\rd x = -\int\limits_{\cF(t)} \partial_t(\nabla\ovd):\nabla\ovd \rd x\\
        &\quad + \frac{1}{2}\int\limits_{\cF(t)} |\nabla \ovd|^2 \partial_t |\ovd|^2\rd x + \int\limits_{\cF(t)} |\nabla\ovd|^2 \left(u^{\cF}\cdot\nabla\right) \frac{|\ovd|^2}{2} \rd x + \int\limits_{\cF(t)}  \Delta \ovd \cdot \left(u^{\cF}\cdot\nabla\right)\ovd \rd x \\
        &= -\frac{1}{2}\int\limits_{\cF(t)} \partial_t |\nabla\ovd|^2\rd x - \int\limits_{\cF(t)}  \nabla \ovd : \nabla \left[\left(u^{\cF}\cdot\nabla\right)\ovd \right]\rd x.
    \end{aligned}
\end{equation}
Furthermore, we use integration by parts as well as the relation $\mdiv u^{\cF}=0$ in $\cF(t)$ to get
\begin{equation}\label{Epot3}
    \begin{aligned}    
        - \int\limits_{\cF(t)}  \nabla \ovd : \nabla \left[\left(u^{\cF}\cdot\nabla\right)\ovd \right]\rd x
        &= - \sum\limits_{i,j,k=1}^3 \int\limits_{\cF(t)}\partial_k \left(u^{\cF}_i\partial_i \ovd_j\right)\partial_k\ovd_j \rd x\\
        &= - \sum\limits_{i,j,k=1}^3 \int\limits_{\cF(t)}\left[\partial_k u^{\cF}_i\partial_i\ovd_j\partial_k\ovd_j+u^{\cF}_i\partial_i\partial_k \ovd_j\partial_k \ovd_j\right]\rd x \\
        &=-\int\limits_{\cF(t)} \nabla \ovd (\nabla\ovd)^{\top}:\nabla u^{\cF} \rd x - \frac{1}{2} \int\limits_{\partial\cS(t)} |\nabla\ovd|^2 \left(u^{\cF}\cdot n\right)\rd \Gamma.
    \end{aligned}
\end{equation}
Hence, combining \eqref{Epot1}--\eqref{Epot3}, we infer that
\begin{equation}\label{Epot}
    \frac{\mathrm{d}}{\mathrm{d}t}\rE_{\mathrm{pot}}(t) = -\int\limits_{\cF(t)} |\Delta \ovd+|\nabla \ovd|^2|\ovd|^2\rd x -\int\limits_{\cF(t)} \nabla \ovd (\nabla\ovd)^{\top}:\nabla u^{\cF} \rd x.
\end{equation}
The interface condition $u^{\cF}=h'(t)+\Omega(t)\times (x-h(t))$ on $\partial\cS(t)$ and the rigid body equations \eqref{eq:rigid body eqs intro} yield
\begin{equation}\label{surface}
    \begin{aligned}
        &\quad -\int\limits_{\partial\cS(t)} \Sigma(u^{\cF},\pi^{\cF}, \ovd)n\cdot u^{\cF} \rd \Gamma\\
        &= -h'(t)\cdot \int\limits_{\partial\cS(t)} \Sigma(u^{\cF},\pi^{\cF}, \ovd)n \rd \Gamma - \Omega(t)\cdot \int\limits_{\partial\cS(t)} (x-h(t))\times \Sigma(u^{\cF},\pi^{\cF}, \ovd)n \rd \Gamma\\
        &=-h'(t)\cdot \mS h''(t) - \Omega(t)\cdot (J\Omega)'(t)= -\frac{\mathrm{d}}{\mathrm{d}t}\rE_{\mathrm{trans}}(t)- \Omega(t)\cdot (J\Omega)'(t).
    \end{aligned}
\end{equation}
From \eqref{eq:Syl and Jab}, it follows that
\begin{equation}\label{Jab1}
    J(t)\Omega(t)'\cdot\Omega(t) = J(t)\Omega(t)\cdot\Omega'(t), \tforall t > 0.
\end{equation}
Using Sylvester's law from \eqref{eq:Syl and Jab} and \eqref{Jab1}, we obtain
\begin{equation}\label{Erot}
    \begin{aligned}
        \frac{\mathrm{d}}{\mathrm{d}t}\rE_{\mathrm{rot}}(t) &= \frac{1}{2}\left(\Omega(t)\cdot (J\Omega)'(t) + \Omega'(t)\cdot (J\Omega)(t)\right)\\
        &= \frac{1}{2}\left[\Omega(t)\cdot (J\Omega'+ \Omega\times J\Omega)(t) + \Omega'(t)\cdot (J\Omega)(t)\right] = \Omega(t)\cdot (J\Omega)'(t).
  \end{aligned}
\end{equation}
Therefore, we concatenate \eqref{Ekin}, \eqref{Epot}, \eqref{surface} and \eqref{Erot} to conclude
\begin{equation}\label{Efinal}
    \frac{\mathrm{d}}{\mathrm{d}t}\rE(t)= -\int\limits_{\cF(t)} |\nabla u^{\cF}|^2\rd x-\int\limits_{\cF(t)} |\Delta \ovd+|\nabla \ovd|^2\ovd|^2\rd x.
\end{equation}
The equilibria $\cE$ are now precisely the critical points of $\rE$, so suppose that $\frac{\mathrm{d}}{\mathrm{d}t}\rE(t) \big|_{t=t_0}=0$ at $t_0 > 0$.
Then \eqref{Efinal} and $u^{\cF}=0$ on $ \partial\cO$ imply $u^{\cF}=0$ in $\cF(t_0)$, and $\ovd$ satisfies $\Delta \ovd+|\nabla \ovd|^2\ovd=0$ in $\cF(t_0)$. 
Moreover, from $u^{\cF}=h'(t)+\Omega(t)\times (x-h(t))$ on $\partial\cS(t)$ and $u^{\cF}=0$ in $\cF(t)$, we conclude by a slight adjustment of \cite[Lemma~2.1]{RT:19} to the 3D case that $h'=0$ and $\Omega=0$, and $\cE$ has the shape as asserted.
\end{proof}

\section{Change of variables}
\label{sec:change of var}

The transformation to a fixed domain presented in the sequel is classical, see for instance the work of Inoue and Wakimoto \cite{IW:77}.
However, we provide it here for convenience.
To this end, let us denote by $m(t)$ the skew-symmetric matrix satisfying $m(t) x = \Omega(t) \times x$.
We then consider the initial value problem
\begin{equation}\label{eq:IVP X0}
\left\{
	\begin{aligned}
		\partial_t X_0(t,y) &= m(t)(X_0(t,y) - h(t)) + h'(t), &&\ton (0,T) \times \R^3, \\
		X_0(0,y) &= y, &&\tfor y \in \R^3. 
	\end{aligned}
\right. 	
\end{equation}
The solution of \eqref{eq:IVP X0} is given by $X_0(t,y) = Q(t) y + h(t)$ for $Q(t) \in \SO(3)$, where $Q \in \rW^{2,p}(0,T)^{3 \times 3}$ is valid provided $h'$, $\Omega \in \rW^{1,p}(0,T)^3$.
The inverse $Y_0(t,x) = Q^{\top}(t)(x - h(t))$ of $X_0(t)$ solves
\begin{equation*}
\left\{
	\begin{aligned}
		\partial_t Y_0(t,x) &= -M(t) Y_0(t,x) - \ell(t), &&\ton (0,T) \times \R^3, \\
		Y_0(0,x) &= x, &&\tfor y \in \R^3, 
	\end{aligned}
\right. 	
\end{equation*}
where $M(t) \coloneqq Q^{\top}(t) m(t) Q(t)$ and $\ell(t) \coloneqq Q^{\top}(t) h'(t)$.
Next, we adjust the diffeomorphisms $X_0$ and $Y_0$ of $\cF(t)$ as well as $\cS(t)$ such that the transform only acts in a suitable open neighborhood of the moving rigid body.
To this end, we define a new diffeomorphism $X$ implicitly by demanding that it solves the initial value problem
\begin{equation}\label{eq:IVP X}
\left\{
	\begin{aligned}
		\partial_t X(t,y) &= b(t,X(t,y)), &&\ton (0,T) \times \R^3, \\
		X(0,y) &= y, &&\tfor y \in \R^3.
	\end{aligned}
\right. 	
\end{equation}
The right-hand side $b$ will be constructed such that the transform fulfills the above requirements.
We assume that the rigid body starts from a position with strictly positive distance from the outer boundary, i.e., $\dist(\cS_0,\partial \cO) > r$ for some $r > 0$.
As the body velocity is continuous, we only consider the solution up to a time for which a small distance such as $\nicefrac{r}{2}$ of the rigid body and the outer boundary is preserved.
Accordingly, we define a cut-off function $\chi \in \rC^{\infty}(\R^3;[0,1])$ by
\begin{equation*}
\chi(x) \coloneqq \left\{
	\begin{aligned}
		1, &\tif \dist(x,\partial \cO) \ge r, \\
		0, &\tif \dist(x,\partial \cO) \le \nicefrac{r}{2}.
	\end{aligned}
\right. 	
\end{equation*}
Equipped with this cut-off function, we define the right-hand side $b \colon [0,T] \times \R^3 \to \R^3$ of \eqref{eq:IVP X} by
\begin{equation}\label{eq:rhs b}
    b(t,x) \coloneqq \chi(x)[m(t)(x - h(t)) + h'(t)] - B_{\cF_0}(\nabla \chi(\cdot)[m(t)(\cdot - h(t)) + h'(t)])(x),
\end{equation}
where $B_{\cF_0} \colon \rC_c^\infty(\cF_0) \to \rC_c^\infty(\cF_0)^3$ represents the Bogovski\u{\i} operator associated to $\cF_0$.
This operator is bounded and yields $\mathrm{div} B_{\cF_0} g = g$ provided $\int_{\cF_0} g \rd y = 0$.
We also refer to \cite{Bog:79, Gal:94, GHH:06} for further details on the Bogovski\u{\i} operator.
In our situation, it readily follows that $\nabla \chi(x)[m(t)(x - h(t)) + h'(t)]$ has mean value zero in $\cF_0$, so the correction by the Bogovski\u{\i} results in $\mdiv(b(t,\cdot)) = 0$ for all $t \in [0,T]$.
Consequently, $b \in \rW^{1,p}\left(0,T;\rC_{c,\sigma}(\cF_0)^3\right)$ as well as $b|_{\del \cS_0} = m(x-h) + h'$.

The Picard-Lindel\"of theorem yields the existence of a unique solution $X \in \rC^1\left((0,T);\rC^\infty(\R^3)^3\right)$ to \eqref{eq:IVP X} for given $h'$, $\Omega \in \rW^{1,p}(0,T)^3$.
We also observe that the solution has continuous mixed partial derivatives $ \frac{\partial^{|\alpha|+1} X}{\partial t (\partial y_j)^{\alpha}}$ and $\frac{\partial^{|\alpha|} X}{(\partial y_j)^{\alpha}}$ for a multi-index $\alpha \in \N_0^3$.
The uniqueness of the solution implies that the function $X(t,\cdot)$ is bijective, with inverse denoted by $Y(t,\cdot)$.
Thanks to $\mdiv b = 0$, Liouville's theorem yields that $X$ and $Y$ are both volume-preserving, so $\rJ_X(t,y) \rJ_Y(t,X(t,y)) = \Id$ and $\det \rJ_X(t,y) = \det \rJ_Y(t,x) = 1$, where, as usual, $\rJ_X$ and $\rJ_Y$ represent the Jacobian matrices of $X$ and $Y$.
Let us observe that the inverse transform $Y$ of $X$ solves the initial value problem
\begin{equation}\label{eq:IVP Y}
\left\{
	\begin{aligned}
		\partial_t Y(t,x) &= b^{(Y)}(t,Y(t,x)), &&\ton (0,T) \times \R^3, \\
		Y(0,x) &= x, &&\tfor x \in \R^3,
	\end{aligned}
\right. 	
\end{equation}
with right-hand side given by
\begin{equation}\label{eq:rhs bY}
    b^{(Y)}(t,y) \coloneqq -\rJ_X^{-1}(t,y) b(t,X(t,y)).
\end{equation}
We emphasize that $b^{(Y)}$ as well as $Y$ have the same space and time regularity as $b$ and $X$, and that the interval of existence of the solution is restricted by the condition $\dist(\cS(t),\partial \cO) > \nicefrac{r}{2}$ for all $t \in (0,T)$.

As it does not affect the analysis, we will assume that $\gamma$, $\lambda$ and $\mu$ are equal to one in the remainder of this article.
Furthermore, as a preparation for the proof of the global well-posedness, we subtract an arbitrary constant vector $d_* \in \R^3$ with $|d_*| = 1$ in the $d$-variable, which does not change the analysis of the local strong well-posedness significantly.
For $X \colon\R^3\rightarrow\R^3$ as in \eqref{eq:IVP X} and $(t,y)\in (0,T) \times \R^3$, we define
\begin{equation*}
\left\{
    \begin{aligned}
        v(t,y) &\coloneqq J_{Y}(t,X(t,y))u^{\cF}(t,X(t,y)), \enspace p(t,y) \coloneqq\pi^{\cF}(t,X(t,y)),\\ 
        d (t,y) &\coloneqq \ovd(t,X(t,y)) - d_*,\\
        \ell(t) &\coloneqq Q(t)^{\top}h'(t), \tand \omega(t) \coloneqq Q(t)^{\top}\Omega(t).
    \end{aligned}
\right.
\end{equation*}
Moreover, we set the transformed stress tensor as
\begin{equation*}
\sigma(v(t,y),p(t,y), d(t,y)) \coloneqq Q(t)^{\top}\Sigma(Q(t)v(t,y),p(t,y),d(t,y))Q(t).
\end{equation*}
The metric contravariant $(g^{ij})$ and covariant tensors $(g_{ij})$ and the Christoffel symbol are given by
\begin{equation*}
    g^{ij}= \sum_{k=1}^3 (\partial_k Y_i)(\partial_k Y_j), \enspace g_{ij}= \sum_{k=1}^3 (\partial_i X_k)(\partial_j X_k), \tand \Gamma^{i}_{jk}= \frac{1}{2} \sum_{l=1}^3 g^{il} (\partial_k g_{jl} + \partial_j g_{kl} - \partial_l g_{jk}).
\end{equation*}
Having these terms at hand, we introduce the transformed Laplacian operators $\cL_1$ and $\cL_2$ corresponding to the velocity and the director field, respectively.
They take the shape
\begin{equation*}
    \begin{aligned}
        (\cL_1 v)_{i} 
        &= \sum_{j,k=1}^{3} \partial_j (g^{jk}\partial_k v_i) + 2 \sum_{j,k,l=1}^{3}  g^{kl}\Gamma_{jk}^i\partial_l v_j +  \sum_{j,k,l=1}^{3}\left(\partial_k(g^{kl}\Gamma^{i}_{kl}) + \sum_{m=1}^3 g^{kl}\Gamma^{m}_{jl}\Gamma^{i}_{km}\right)v_j, \tand\\
        (\cL_2 d)_{i} 
        &= \sum\limits_{j,k=1}^3 \left((\partial_{k}\partial_{j}d_{i})g^{jk}\right) + \sum\limits_{k=1}^3 (\Delta Y_k) (\partial_k d_i).
    \end{aligned}
\end{equation*}
For the representation of $\cL_1$, we refer e.\ g.\ to \cite[(3.13)]{GGH:13}, while $\cL_2$ can for instance be found in \cite[(3.13)]{BBH:23}.
Defining $B(d)h$ with $B(d)d=\mdiv(\nabla d (\nabla d)^{\top})$ by 
\begin{equation}\label{eq:def B}
    [B(d)h]_{i} \coloneqq \partial_i d_l \Delta h_l + \partial_k d_l \partial_k \partial_i h_l,
\end{equation}
we calculate the transformed terms associated to the gradient of the pressure, to $B(d)h$, to the time derivative and to the convective term in the fluid equation
\begin{equation*}
    \begin{aligned}
        (\cG p)_{i} = \sum\limits_{j=1}^3 g^{ij}\partial_j p, \enspace (\cB(d) h)_{i}
        &= \sum\limits_{l=1}^3 \left((\cL_2 h)_{l}\sum\limits_{m=1}^3 {\partial_m d_{l}}{\partial_i Y_m}  \right)\\
        &\quad + \sum\limits_{k,l,m=1}^3 \left( {\partial_m d_{l}}{\partial_k Y_m}\right)\left(\sum\limits_{j=1}^3 (\partial_j \partial_m h_{l})(\partial_k Y_j \partial_i Y_m) +  (\partial_m h_l )(\partial_k \partial_i Y_m )\right),
    \end{aligned}
\end{equation*}
\begin{equation*}
    (\cM v)_{i} = \sum\limits_{j=1}^3 \partial_t Y_j \partial_j v_i + \sum\limits_{j,k=1}^3 \left(\Gamma^i_{jk}\partial_t Y_k + (\partial_k Y_i)(\partial_j \partial_t X_k)\right)v_j, \tand (\cN(v))_{i} = \sum\limits_{j=1}^3 v_j\partial_j v_i + \sum\limits_{j,k=1}^3 (\Gamma^i_{jk} v_j v_k).
\end{equation*}

It is then possible to rewrite the system \eqref{eq:LC-fluid equationsiso}--\eqref{eq:initial} on the cylindrical domain $(0,T)\times\cF_0$ as
\begin{equation}\label{eq:cv1}
\left\{
    \begin{aligned}
        \partial_t v + (\cM-\cL_1)v +\cN(v)+\cG p+\cB(d)d
        &=0, &&\tin (0,T) \times \cF_0,\\
        \mdiv v
        &=0, &&\tin (0,T) \times \cF_0,\\
        {\partial_t d} + \nabla d\cdot \partial_t Y + (v\cdot \nabla) d - \cL_2 d  
        &=  |\nabla d|^2 (d+d_*), &&\tin (0,T) \times \cF_0,\\
        \mS\ell' + \mS(\omega\times\ell)
        &= -\int\limits_{\partial \cS_0} \sigma(v,p, d) N\rd \Gamma, &&\tin (0,T),\\
        J_0\omega' - \omega\times (J_0\omega) 
        &=-\int\limits_{\partial \cS_0} y \times  \sigma(v,p, d) N\rd \Gamma, &&\tin (0,T),
    \end{aligned}
\right.
\end{equation}
where $N = Q^\top(t) \nu(t)$ denotes the unit outward normal to the boundary of $\cF_0$, so it is directed inwards to $\cS_0$, and with the boundary conditions
\begin{equation}\label{eq:cv2}
    v = 0, \enspace \partial_\nu d = 0 \ton (0,T) \times \partial \cO, \tand v = \ell+\omega\times y, \enspace \partial_\nu d = 0 \ton \partial\cS_0,
\end{equation}
and the initial conditions, which are slightly adjusted for the director part by the transform,
\begin{equation}\label{eq:cv3}
    v(0)=v_0, \enspace d(0) = d_0 - d_*, \enspace \ell(0) = \ell_0 \tand \omega(0) = \omega_0.
\end{equation}
We can now reformulate the first main result, \autoref{thm:local wp}, on the reference configuration.

\begin{thm}\label{thm:local wp reformulated}
Let $p,q \in (1,\infty)$ be such that \eqref{eq:cond p and q} holds true, let $\cO \subset \R^3$ be a bounded domain of class $\rC^3$, and consider the domains $\cS_0$ and $\cF_0$ occupied by the rigid body and the fluid at initial time.
Furthermore, let $z_0 = (v_0,d_0,\ell_0,\omega_0) \in \rX_\gamma$, with $\rX_\gamma$ as defined in \eqref{eq:trace space}, and $|d_0| = 1$.
If for some $r > 0$, it is valid that $\dist(\cS_0,\partial \cO) > r$, then there are $T' \in (0,T]$ and $X \in \rC^1\left([0,T'];\rC^2(\R^3)^3\right)$ such that $X(\tau,\cdot) \colon \cF(\tau) \to \cF_0$ are $\rC^2$-diffeomorphisms for all $\tau \in [0,T']$, and the transformed interaction problem \eqref{eq:cv1}--\eqref{eq:cv3} admits a unique solution
\begin{equation*}
    \begin{aligned}
        v &\in \rW^{1,p}\left(0,T';\rL^q(\cF_0)^3\right) \cap \rL^p\left(0,T';\rW^{2,q}(\cF_0)^3\right), \enspace p \in \rL^p\left(0,T';\rW^{1,q}(\cF_0) \cap \rL_0^q(\cF_0)\right),\\
        d &\in \rW^{1,p}\left(0,T';\rL^q(\cF_0)^3\right) \cap \rL^p\left(0,T';\rW^{2,q}(\cF_0)^3\right),\\
        \ell &\in \rW^{1,p}(0,T')^3, \tand \omega \in \rW^{1,p}(0,T')^3.
    \end{aligned}
\end{equation*} 
\end{thm}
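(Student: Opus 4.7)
The plan is to prove Theorem \ref{thm:local wp reformulated} by a Banach fixed point argument built on the maximal regularity of the linearized system. Specifically, I would introduce the solution space
\[
\mathbb{E}(T') \coloneqq \rW^{1,p}(0,T';\rL^q_\sigma(\cF_0)) \cap \rL^p(0,T';\rW^{2,q}(\cF_0)^3) \times \rL^p(0,T';\rW^{1,q}(\cF_0)\cap\rL_0^q(\cF_0)) \times \bigl[\rW^{1,p}(0,T';\rL^q(\cF_0)^3)\cap \rL^p(0,T';\rW^{2,q}(\cF_0)^3)\bigr] \times \rW^{1,p}(0,T')^3 \times \rW^{1,p}(0,T')^3,
\]
endowed with the natural norm, and consider the closed ball $B_R \subset \mathbb{E}(T')$ centered at an extension $z^*$ of the initial data (for example, the solution of the linearized problem with zero right-hand side and the given initial data). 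I would then define a map $\Phi \colon B_R \to \mathbb{E}(T')$ that sends $(\tv,\tp,\td,\tell,\tomega)$ to the unique solution $(v,p,d,\ell,\omega)$ of the linearized system at the reference configuration, in which all terms that involve the transform-induced coefficients (the operators $\cM, \cN, \cG, \cB, \cL_1 - \Delta, \cL_2-\Delta$, the convective term in the $d$-equation, the modified stress on $\partial\cS_0$, the Coriolis-type terms $\omega\times\ell, \omega\times J_0\omega$, and the Ginzburg--Landau-like term $|\nabla\td|^2(\td+d_*)$) are treated as source data computed from $(\tv,\tp,\td,\tell,\tomega)$.

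For this to be well-defined, I would invoke the maximal $\rL^p$--$\rL^q$ regularity of the linearized problem. Because the linearized system is upper triangular (the director equation decouples and feeds into the fluid--rigid body equations only through the nonlinear term $\cB(d)d$ and through the stress), one can combine two results: the maximal regularity of the Neumann Laplacian on $\cF_0$, giving the $d$-component, and the maximal regularity of the linearized fluid--structure semigroup from Geissert--Götze--Hieber, \cite[Theorem 4.1]{GGH:13}, for the $(v,p,\ell,\omega)$-component. The right-hand sides from the initial data and from the transform-induced lower order terms are then absorbed by a standard perturbation argument and, where needed, by shifting in $t$ to make the small-time terms negligible.

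The key step and the main obstacle is the nonlinear self-map and contraction estimate. Here I would exploit the embedding $\rB_{qp}^{2-2/p}(\cF_0) \hookrightarrow \rC^1(\overline{\cF_0})$ granted by \eqref{eq:cond p and q}, so that $\tv,\td \in BUC([0,T'];\rC^1(\overline{\cF_0}))$, and the analogous mixed-derivative estimates obtained from the Picard--Lindelöf solution of \eqref{eq:IVP X}, \eqref{eq:IVP Y} with the right-hand sides \eqref{eq:rhs b}, \eqref{eq:rhs bY} constructed via the cut-off $\chi$ and the Bogovski\u{\i} operator. Because $b\in \rW^{1,p}(0,T';\rC_{c,\sigma}^{\infty}(\cF_0)^3)$, the maps $X,Y$ are $\rW^{2,p}$-in-time, $\rC^2$-in-space and the coefficients $g^{ij}, g_{ij}, \Gamma^i_{jk}, \partial_t Y, \Delta Y$ satisfy quantitative smallness in the norms relevant for $\mathbb{E}(T')$ as $T'\to 0$. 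This lets me estimate each of $\cM, \cG p-\nabla p, (\cL_1-\Delta)v, (\cL_2-\Delta)d, \cN(v), \cB(d)d, \nabla d\cdot \partial_t Y, (v\cdot\nabla)d, |\nabla d|^2(d+d_*)$, the transformed stress on $\partial\cS_0$ and the terms $\omega\times\ell, \omega\times J_0\omega$ by $C(R)T'^{\alpha}\|z\|_{\mathbb{E}(T')}$ for some $\alpha>0$, and to get analogous Lipschitz estimates in $B_R$. This delivers self-mapping and contraction for $T'$ sufficiently small, yielding a unique fixed point.

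Finally, I would verify that the constructed $X$ actually gives a $\rC^2$-diffeomorphism $\cF(\tau)\to \cF_0$ on $[0,T']$. Choosing $T'$ possibly smaller, continuity of $h'$ and $\Omega$ in $\rW^{1,p}\hookrightarrow \rC([0,T'])$ ensures $\dist(\cS(t),\partial\cO)>\nicefrac{r}{2}$, so the cut-off construction \eqref{eq:rhs b} remains meaningful; Liouville's theorem and the smoothness of $b$ imply $X(\tau,\cdot)$ is a volume-preserving $\rC^2$-diffeomorphism, which in turn legitimizes the pull-back of the solution back to Theorem \ref{thm:local wp} on the moving domain. The main technical difficulty throughout is the careful bookkeeping of the transform-induced coefficient estimates together with the trace and compatibility conditions encoded in $\rX_\gamma$ so that the fixed point indeed lives in $\mathbb{E}(T')$.
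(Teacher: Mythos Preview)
Your overall strategy is right and matches the paper's: reference solution to absorb the initial data, maximal regularity of the linearized problem via the triangular structure (Neumann Laplacian for $d$, then the fluid--structure result of \cite[Theorem~4.1]{GGH:13}), and a contraction on a small ball for small $T'$ using quantitative smallness of the transform coefficients.

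There is, however, one genuine gap. You put the whole Ericksen stress term $\cB(d)d$ on the nonlinear side and claim it obeys an estimate of the form $C(R)\,T'^{\alpha}\|z\|_{\mathbb{E}(T')}$. This fails. Writing $d=\hd+d^{*}$ with $\hd$ in the ball of radius $R$ and $d^{*}$ the reference solution, the bilinear structure $[B(d)h]_{i}=\partial_{i}d_{l}\,\Delta h_{l}+\partial_{k}d_{l}\,\partial_{k}\partial_{i}h_{l}$ produces the cross term $B(d^{*})\hd$, for which
\[
\|B(d^{*})\hd\|_{\rL^{p}(0,T';\rL^{q})}\le C\,\|\nabla d^{*}\|_{\rL^{\infty}(0,T';\rL^{\infty})}\,\|\nabla^{2}\hd\|_{\rL^{p}(0,T';\rL^{q})}.
\]
The second factor is the top-order part of $\|\hd\|_{\E_{T'}^{d}}$ and carries no power of $T'$, while the first factor is bounded below by $\|\nabla d_{0}\|_{\rL^{\infty}}$ and hence does not vanish as $T'\to0$. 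Consequently both the self-map bound and the Lipschitz constant for this term are of size $\sim\Cmr\|\nabla d_{0}\|_{\rL^{\infty}}$, which need not be $<1$ for arbitrary initial data; you would only close the argument under an unwanted smallness assumption on $d_{0}$.

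The paper's remedy (\autoref{prop:max reg hom IVs} and the term $\cQ(d^{*},\hd)$ in \eqref{eq:nonlinear terms F_i}) is to move precisely this piece into the linear operator: the linearized fluid equation reads $\partial_{t}\hv-\Delta\hv+\nabla\hp+B(d^{*})\hd=F_{1}$, with $d^{*}$ frozen. Since the $d$-equation is still decoupled, one first solves it, then inserts the resulting $\td$ into $B(d^{*})\td$ and treats this as an additional datum for the fluid--structure block; the bound $\|B(d^{*})\td\|_{p,q}\le C(\|d^{*}\|_{\E_{T_{0}}^{d}}+\|d_{0}\|_{\rX_{\gamma}^{d}})\|f_{2}\|_{\F_{T}^{d}}$ is absorbed into a $T$-independent maximal regularity constant. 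What remains on the nonlinear side is $\cQ(d^{*},\hd)=B(d^{*})\hd-\cB(d^{*}+\hd)(d^{*}+\hd)$, whose pieces are either quadratic in $\hd$, multiplied by $C_{T'}^{*}\to0$, or carry a factor $T'$ from the transform estimates. With this adjustment your scheme goes through exactly as you describe.
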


Next, we rewrite the second main result, \autoref{thm:global wp close to equilibria}, on the fixed domain.

\begin{thm}\label{thm:global wp close to equilibria reformulated}
Let $p,q \in (1,\infty)$ be such that \eqref{eq:cond p and q} holds true, let $\cO \subset \R^3$ be a bounded domain of class~$\rC^3$, and consider the domains $\cS_0$ and $\cF_0$ occupied by the rigid body and the fluid at time zero.
Moreover, assume that $\dist(\cS_0,\partial \cO) > r$ for some $r > 0$, and let $d_* \in \R^3$ constant with $|d_*| = 1$.
Then there is $\eta_0 > 0$ such that for all $\eta \in (0,\eta_0)$, there are two constants $\delta_0$ and $C > 0$ such that for all $\delta \in (0,\delta_0)$ and for all $z_0 = (v_0,d_0,\ell_0,\omega_0) \in \rX_\gamma$, with $\rX_\gamma$ as in \eqref{eq:trace space}, $|d_0| = 1$ and $\| (v_0, d_0 - d_*, \ell_0,\omega_0) \|_{\rX_\gamma} \le \delta$, then $X \in \rC^1\left([0,\infty);\rC^2(\R^3)^3\right)$ is such that $X(\tau,\cdot) \colon \cF(\tau) \to \cF_0$ are $\rC^2$-diffeomorphisms for all $\tau \in [0,\infty)$, and the transformed interaction problem \eqref{eq:cv1}--\eqref{eq:cv3} admits a unique strong solution $(v,p,d,\ell,\omega)$ with
\begin{equation*}
    \begin{aligned}
        &\quad \left\| \mre^{\eta(\cdot)} v \right\|_{\rL^p(0,\infty;\rW^{2,q}(\cF_0))} + \left\| \mre^{\eta(\cdot)} \partial_t v \right\|_{\rL^p(0,\infty;\rL^q(\cF_0))} + \left\| \mre^{\eta(\cdot)} v \right\|_{\rL^\infty\left(0,\infty;\rB_{qp}^{2-\nicefrac{2}{p}}(\cF_0)\right)}\\
        & + \left\| \mre^{\eta(\cdot)} p \right\|_{\rL^p(0,\infty;\rW^{1,q}(\cF_0) \cap \rL_0^q(\cF_0))} + \left\| \mre^{\eta(\cdot)} \nabla d \right\|_{\rL^p(0,\infty;\rL^q(\cF_0))} + \left\| \mre^{\eta(\cdot)} \nabla^2 d \right\|_{\rL^p(0,\infty;\rL^q(\cF_0))}\\
        & + \left\| \mre^{\eta(\cdot)} \partial_t d \right\|_{\rL^p(0,\infty;\rL^q(\cF_0))} + \left\| d \right\|_{\rL^\infty\left(0,\infty;\rB_{qp}^{2-\nicefrac{2}{p}}(\cF_0)\right)} + \left\| \mre^{\eta(\cdot)} \ell \right\|_{\rW^{1,p}(0,\infty)} + \left\| \mre^{\eta(\cdot)} \omega \right\|_{\rW^{1,p}(0,\infty)} \le C \delta.
    \end{aligned}
\end{equation*}
\end{thm}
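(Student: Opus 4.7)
The plan is to obtain the solution as the fixed point of a contraction mapping on a small ball in the exponentially weighted solution space, reducing matters to the maximal $\mathrm{L}^p([0,\infty))$-regularity of the linearization around $(0,d_*,0,0)$. Rewriting the system \eqref{eq:cv1}--\eqref{eq:cv3} as a perturbation of its linearization, the linear principal part is the fluid-structure operator (whose maximal $\mathrm{L}^p([0,\infty))$-regularity with exponential decay will come from \cite{MT:18} combined with the shift already identified in Section~\ref{sec:lin theory}) coupled, via the upper triangular structure, to the Neumann heat equation for $d$. All remaining contributions from $\cM-\cL_1-\Delta$, $\cG-\nabla$, $\cB(d)d$, $\cN(v)$, $\nabla d\cdot\partial_tY+(v\cdot\nabla)d$, $(\cL_2-\Delta)d$, $|\nabla d|^2(d+d_*)$, and the rotational/translational coupling $\omega\times\ell$, $\omega\times(J_0\omega)$, together with the stress-tensor differences in the surface integrals, are placed on the right-hand side as $(F_1,F_2,F_3,F_4,F_5)$, and estimated in the weighted $\mathrm{L}^p$ norms.

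\textbf{Dealing with the kernel of the Neumann Laplacian.} The obstruction to naive maximal regularity with exponential decay is that the Neumann Laplacian has a one-dimensional kernel of constants, so no uniform decay for $d$ itself can be expected. Following the splitting strategy of Haak et al.\ \cite{HMTT:19}, I decompose
\[
d(t,y) = \tilde d(t,y) + \bar d(t), \qquad \bar d(t) \coloneqq \frac{1}{|\cF_0|}\int_{\cF_0} d(t,y)\,\rd y,
\]
so $\tilde d(t,\cdot)\in\mathrm{L}_0^q(\cF_0)$. On the mean-value-zero part, the Neumann Laplacian has a spectral gap, so the linearization admits maximal $\mathrm{L}^p([0,\infty);\mathrm{L}^q(\cF_0))$-regularity with exponential decay $\mre^{\eta(\cdot)}$ for $\eta$ smaller than this gap (and the fluid-structure decay rate). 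For the average $\bar d$, integrating the director equation against $|\cF_0|^{-1}$ yields an ordinary differential identity whose right-hand side is quadratic in the small unknowns; hence $\bar d(t)$ stays close to $\bar d(0)$, which is itself $\mathrm{O}(\delta)$-close to $d_*$ by the hypothesis $\|d_0-d_*\|_{\rB_{qp}^{2-\nicefrac{2}{p}}}\le\delta$. This explains why only $\|d-d_*\|_{\rL^\infty}\le C\delta$ appears in the conclusion, without exponential weight.

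\textbf{Fixed point.} Let $\mathbb{E}_\eta$ denote the weighted space appearing in the conclusion of the theorem, and let $\mathbb{B}_{C\delta}\subset\mathbb{E}_\eta$ be the closed ball of radius $C\delta$. Given $(v^*,p^*,d^*,\ell^*,\omega^*)\in\mathbb{B}_{C\delta}$, I construct the right-hand sides using the transforms $X$ and $Y$ generated by $\ell^*,\omega^*$ via \eqref{eq:IVP X}--\eqref{eq:rhs b}, and define the map $\Phi$ by solving the linear weighted problem. Self-mapping and contraction reduce to Lipschitz estimates of all the nonlinear terms in the weighted norms. The building blocks are, on the one hand, bounds for $X-\mathrm{id}$, $Y-\mathrm{id}$, $\rJ_X-\Id$, $\Gamma^i_{jk}$, $g^{ij}-\delta^{ij}$, and $\partial_tX$, all controllable in $\rW^{1,p}$ by the $\rW^{1,p}$-norm of $(\ell^*,\omega^*)$ which is $\mathrm{O}(\delta)$; and on the other hand, bilinear estimates in the mixed-derivative maximal regularity space, using $\rB_{qp}^{2-\nicefrac{2}{p}}\hookrightarrow\rC^1$ from \eqref{eq:cond p and q}. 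The quadratic nonlinearity $|\nabla d|^2(d+d_*)$ is placed in $\mre^{\eta(\cdot)}\rL^p(0,\infty;\rL^q(\cF_0))$ using the exponential decay of $\nabla d$ (doubling the weight on the quadratic factor yields a bound by $C\delta^2$), and analogously for $\cB(d)d$, the coupling through $\Sigma$ in the surface integrals, and the convective terms. The weighted ODE contributions $\omega\times\ell$ and $\omega\times(J_0\omega)$ are handled by the algebra property of $\mre^{\eta(\cdot)}\rW^{1,p}(0,\infty)$ in one variable.

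\textbf{No-collision and the main obstacle.} Smallness of $\|\ell\|_{\rW^{1,p}}+\|\omega\|_{\rW^{1,p}}$ with the exponential weight yields $\|h\|_{\rL^\infty(0,\infty)}\le C\delta$, so for $\delta_0$ small the distance $\dist(\cS(t),\partial\cO)$ stays above $\nicefrac r2$ and the transform of Section~\ref{sec:change of var} remains valid on $[0,\infty)$. The hardest step will be the simultaneous control of the director and fluid decay on the whole half-line: the stress coupling $\lambda\nabla d\,(\nabla d)^\top$ feeds a quadratic term into the fluid-structure equation, while the convective and transport terms $(v\cdot\nabla)d$ and $\nabla d\cdot\partial_tY$ feed bilinear terms into the director equation. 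Closing the contraction requires that the weighted estimates for both subsystems share a common admissible exponent $\eta$, which forces $\eta_0$ to be taken as the minimum of the fluid-structure spectral abscissa from \cite{MT:18} and the first non-zero Neumann eigenvalue of $-\Delta$ on $\cF_0$; once this is done, the usual Banach fixed point argument yields the unique solution and the asserted estimate.
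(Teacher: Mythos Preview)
Your proposal is correct and follows essentially the same approach as the paper: you identify the kernel of the Neumann Laplacian as the obstruction, perform the mean-zero/average splitting $d=\tilde d+\bar d$ (the paper's $d_m+d_{\mathrm{avg}}$) in the spirit of \cite{HMTT:19}, use maximal $\mathrm{L}^p([0,\infty))$-regularity with exponential weight on the mean-zero component coupled with the fluid--structure operator, control $\bar d$ via the averaged ODE, and close a contraction on the exponentially weighted ball using smallness estimates for the transform quantities $\rJ_X-\Id$, $g^{ij}-\delta_{ij}$, $\Gamma^i_{jk}$, $\partial_tY$ together with bilinear estimates exploiting $\rB_{qp}^{2-2/p}\hookrightarrow\rC^1$. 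The only detail worth making explicit is that the ODE for $\bar d$ requires the averaged right-hand side $G_{2,\mathrm{avg}}$ to lie in $\rL^1(0,\infty)$ (not just weighted $\rL^p$), which the paper verifies separately in its nonlinear estimates; your quadratic-smallness argument for $\bar d$ implicitly relies on this and should be stated.
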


\section{Linear theory}
\label{sec:lin theory}

In this section, we establish maximal regularity of the linearized interaction problem for the fixed point procedure of the local strong well-posedness.With regard to the global strong well-posedness, we show that the linearization at a constant equilibrium admits maximal $\rL^p([0,\infty))$-regularity up to a shift.

In the sequel, we introduce some further spaces to shorten notation.
For this purpose, let $0 < T \le \infty$ as well as $p$, $q \in (1,\infty)$.
We introduce the space for the terms on the right-hand side, and it is given by
\begin{equation}\label{eq:data space}
    \F_T \coloneqq \F_T^v \times \F_T^d \times \F_T^\ell \times \F_T^\omega \coloneqq \rL^p\left(0,T;\rL^q(\cF_0)^3\right) \times \rL^p\left(0,T;\rL^q(\cF_0)^3\right) \times \rL^p(0,T)^3 \times \rL^p(0,T)^3.
\end{equation}
Furthermore, for $\rL_\sigma^q(\cF_0)$ denoting the divergence free vector fields in $\rL^q(\cF_0)^3$ and $z = (v,d,\ell,\omega)$, we set
\begin{equation}\label{eq:ground and regularity space}
    \begin{aligned}
        \rX_0
        \coloneqq &\rL_\sigma^q(\cF_0) \times \rL^q(\cF_0)^3 \times \R^3 \times \R^3, \tand\\
        \rX_1
        \coloneqq &\bigl\{
        z \in \rW^{2,q}(\cF_0)^3 \cap \rL_\sigma^q(\cF_0) \times \rW^{2,q}(\cF_0)^3 \times \R^3 \times \R^3 : v = 0, \enspace \del_\nu d = 0, \ton \del \cO, \tand\\
        &\enspace v = \ell + \omega \times y, \enspace \del_\nu d = 0, \ton \del \cS_0\bigr\}.
    \end{aligned}
\end{equation}
Similarly as in \cite[Lemma~5.3]{BBH:23}, $\rX_\gamma$ can be obtained from $\rX_0$ and $\rX_1$ by real interpolation.
In order to simplify the notation, for $j \in \{v,d\}$, we will also use $\| \cdot \|_{\rX_0^j}$, $\| \cdot \|_{\rX_1^j}$ and $\| \cdot \|_{\rX_\gamma^j}$ to denote $\| \cdot \|_{\rL^q(\cF_0)}$, $\| \cdot \|_{\rW^{2,q}(\cF_0)}$ and $\| \cdot \|_{\rB_{qp}^{2-\nicefrac{2}{p}}(\cF_0)}$, respectively, where we remark that the norms are not affected by the boundary conditions, or by the consideration of closed subspaces.
Given $\rX_0$ and $\rX_1$, we further define the solution space $\E_T$ by
\begin{equation}\label{eq:sol space}
    \E_T \coloneqq \rW^{1,p}(0,T;\rX_0) \cap \rL^p(0,T;\rX_1).
\end{equation}
We employ the notation $\prescript{}{0}{\E_T}$ for the situation of homogeneous initial values.
As above, we use $\| \cdot \|_{\E_T^j}$ to denote $\| \cdot \|_{\rW^{1,p}(0,T;\rL^q(\cF_0)) \cap \rL^p(0,T;\rW^{2,q}(\cF_0))}$ for $j \in \{v,d\}$.
In order to account for the pressure, we set
\begin{equation}\label{eq:adjusted max reg space}
    \tE_T \coloneqq \E_T \times \E_T^p \coloneqq \E_T \times \rL^p(0,T;\rW^{1,q}(\cF_0) \cap \rL_0^q(\cF_0)),
\end{equation}
where the intersection with $\rL_0^q(\cF_0))$ is chosen to get uniqueness of the pressure.

Concerning the linearization, we do not include the part of the stress tensor linked to the director $d$, but we will treat this term as a nonlinear term on the right-hand side.
In that respect, we define
\begin{equation}\label{eq:lin stress tensor}
    \rT(v,p) \coloneqq 2 \D  v - p \Id. 
\end{equation}
For $d^* \in \BUC([0,T];\rX_\gamma^d)$, where $\BUC([0,T])$ denotes the bounded and uniformly continuous functions on $[0,T]$, determined precisely later and $B(d^*) d$ as in \eqref{eq:def B} as well as $d_* \in \R^3$ constant such that $|d_*| = 1$, the linearization reads as
\begin{equation}\label{eq:linearization loc}
\left\{
    \begin{aligned}
        \partial_t {v} -\Delta {v} +\nabla {p} + B(d^*) d &=f_1, \enspace \mdiv {v} = 0, \tand {\partial_t {d}}  - \Delta {d} = f_2, &&\tin (0,T) \times \cF_0,\\
        \mS({\ell})' + \int_{\del \cS_0} \rT(v,p) N \rd \Gamma &= f_3, &&\tin (0,T),\\ 
        J_0({\omega})' + \int_{\del \cS_0} y \times \rT(v,p) N \rd \Gamma &= f_4, &&\tin (0,T),\\
        {v}(t,y)&={\ell}(t)+{\omega}(t)\times y, &&\ton (0,T) \times \partial\cS_0,\\
        {v}(t,y)&=0, &&\ton (0,T) \times \partial \cO,\\
        \partial_\nu d &= 0, &&\ton (0,T) \times \partial \cF_0,\\
        v(0) = v_0, \enspace d(0) &= d_0 - d_*, \enspace \ell(0) = \ell_0, \tand \omega(0) = \omega_0, 
    \end{aligned}
\right.
\end{equation}
where $(f_1,f_2,f_3,f_4) \in \F_T$ and $(v_0,d_0,\ell_0,\omega_0) \in \rX_\gamma$.
First, we discuss the existence of a so-called reference solution to capture the initial values.

\begin{prop}\label{prop:reference sol}
Let $p,q \in (1,\infty)$ satisfy \eqref{eq:cond p and q}, and consider $(v_0,d_0,\ell_0,\omega_0) \in \rX_\gamma$, where $\rX_\gamma$ was introduced in \eqref{eq:trace space}. 
Moreover, consider \eqref{eq:linearization loc} with
\begin{equation*}
    \partial_t {v} -\Delta {v} +\nabla {p} + B(d^*) d =f_1 \enspace \text{replaced by} \enspace \partial_t {v} -\Delta {v} +\nabla {p} = f_1
\end{equation*}
as well as $(f_1,f_2,f_3,f_4) = (0,0,0,0)$.
Then there is a unique solution $(z^*,p^*) = (v^*,d^*,\ell^*,\omega^*,p^*) \in \tE_T$ to the resulting system.
\end{prop}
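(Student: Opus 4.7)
The plan is to exploit the fact that once the coupling term $B(d^*) d$ is dropped and the right-hand sides are set to zero, the linearization \eqref{eq:linearization loc} decouples completely into two independent subsystems. The block for $(v,p,\ell,\omega)$ is precisely the linearized fluid-rigid body problem
\begin{equation*}
    \partial_t v - \Delta v + \nabla p = 0, \enspace \mdiv v = 0 \tin (0,T) \times \cF_0,
\end{equation*}
together with Newton's equations on $\cS_0$, the interface condition $v = \ell + \omega \times y$ on $\partial \cS_0$, the no-slip condition on $\partial \cO$, and initial data $(v_0,\ell_0,\omega_0)$. The block for $d$ is just the heat equation with Neumann conditions,
\begin{equation*}
    \partial_t d - \Delta d = 0 \tin (0,T) \times \cF_0, \enspace \partial_\nu d = 0 \ton (0,T) \times \partial \cF_0, \enspace d(0) = d_0 - d_*.
\end{equation*}

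For the first subsystem, I would invoke the maximal $\rL^p$-regularity result of Geissert, G\"otze and Hieber \cite[Theorem~4.1]{GGH:13}, which handles exactly this linear fluid-structure interaction problem (with zero forcing) and yields a unique solution
\begin{equation*}
    (v^*,p^*,\ell^*,\omega^*) \in \E_T^v \times \E_T^p \times \rW^{1,p}(0,T)^3 \times \rW^{1,p}(0,T)^3,
\end{equation*}
provided the initial datum $(v_0,\ell_0,\omega_0)$ lies in the trace space of $\E_T^v \times \rW^{1,p}(0,T)^3 \times \rW^{1,p}(0,T)^3$. This is the $v$-$\ell$-$\omega$ part of $\rX_\gamma$ in \eqref{eq:trace space}; the compatibility conditions $v_0 = 0$ on $\partial \cO$ and $v_0 = \ell_0 + \omega_0 \times y$ on $\partial \cS_0$ are already encoded there. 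The restriction $p^* \in \rL^p(0,T; \rW^{1,q}(\cF_0) \cap \rL_0^q(\cF_0))$ fixes the pressure uniquely.

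For the heat equation, I would rely on the well-known maximal regularity of the Neumann Laplacian on $\rL^q(\cF_0)$; its generator $\Delta_N$ enjoys a bounded $\rH^\infty$-calculus on a domain of class $\rC^3$, so the initial value problem admits a unique solution $d^* \in \E_T^d$ whenever the datum $d_0 - d_*$ lies in the real interpolation space $(\rL^q(\cF_0)^3, \rD(\Delta_N))_{1-\nicefrac{1}{p},p} = \rB_{qp}^{2-\nicefrac{2}{p}}(\cF_0)^3$ with the compatibility condition $\partial_\nu d_0 = 0$ on $\partial \cO \cup \partial \cS_0$ when $2 - \nicefrac{2}{p} > 1 + \nicefrac{1}{q}$, which is guaranteed by \eqref{eq:cond p and q}. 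This is again precisely the $d$-part of $\rX_\gamma$. Setting $z^* = (v^*,d^*,\ell^*,\omega^*)$ and concatenating with $p^*$ produces the desired element of $\tE_T$; uniqueness follows from the uniqueness of each subproblem. There is no essential obstacle here since both building blocks are already available in the literature; the only point worth being careful about is to note that the definition of $\rX_\gamma$ in \eqref{eq:trace space} is tailored exactly so that both \cite[Theorem~4.1]{GGH:13} and the Neumann heat semigroup theory are directly applicable, and that the choice of the subspace $\rL_0^q(\cF_0)$ for the pressure guarantees uniqueness in the pressure component.
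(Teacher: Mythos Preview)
Your proposal is correct and follows essentially the same approach as the paper: you observe that dropping $B(d^*)d$ and the forcing decouples the system into the linearized fluid-rigid body problem (handled by \cite[Theorem~4.1]{GGH:13}) and the Neumann heat equation for $d$ (handled by maximal regularity of the Neumann Laplacian), with the compatibility conditions built into the definition of $\rX_\gamma$. The paper's proof is the same argument in condensed form.
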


\begin{proof}
Let us observe that the equation for $d$ is completely decoupled.
On the one hand, this allows us to use $\E_T^d$ to denote the $d$-part of the maximal regularity space.
On the other hand, we can then invoke the well known maximal regularity of the Neumann Laplacian to obtain $d^* \in \E_T^d$ satisfying the heat equation with homogeneous right-hand side and initial value $d_0 - d_* \in \rX_\gamma^d$.
For the remaining fluid-structure part, we refer for instance to \cite[Theorem~4.1]{GGH:13} and its analogue in the present setting, where we observe that $z_0 \in \rX_\gamma$ means precisely that the compatibility conditions $(CN)$ from \cite[p.~1398]{GGH:13} adjusted to our situation are satisfied.
\end{proof}

\begin{rem}\label{rem:norm of reference sol}
Let $(z^*,p^*) = (v^*,d^*,\ell^*,\omega^*,p^*) \in \tE_{T_0}$ be the reference solution on $(0,T_0)$ resulting from \autoref{prop:reference sol}.
The part $d^*$ is obtained by a convolution of the heat semigroup with the initial data $d_0 - d_*$, so its $\rW^{1,p}(0,T;\rL^q(\cF_0))\cap \rL^p(0,T;\rW^{2,q}(\cF_0))$-norm tends to zero as $T \to 0$, because the length of the time interval tends to zero.
For the fluid-structure part, it can also be shown that the norm of the reference solution shrinks to zero, see for example \cite[Lemma~6.6]{GGH:13}.
Hence, we get
\begin{equation*}
    C_T^* \coloneqq \| z^* \|_{\E_T} + \| p^* \|_{\E_T^{p}} \to 0 \tfor T \to 0.
\end{equation*}
\end{rem}

\begin{rem}
It would be interesting to find out whether the operator associated to the linearized fluid-structure problem admits a bounded $\mathcal{H}^\infty$-calculus.
For further information on the latter property, we also refer to \cite{CDIY:96}.
\end{rem}

The following well known result provides uniform-in-time estimates of the trace space norms of the principle variable and the $d$-part by the initial data and the norm of the solution, where the constant in the estimate is time-independent. 

\begin{lem}\label{lem:est of BUC trace by MR space and IVs}
Let $z \in \E_T$.
Then for a constant $C>0$ independent of $T$, it holds that
\begin{equation*}
    \sup_{t \in [0,T]} \| z(t) \|_{\rX_\gamma} \le C\left(\| z(0) \|_{\rX_\gamma} + \| z \|_{\E_T}\right), \tand \sup_{t \in [0,T]} \| d(t) \|_{\rX_\gamma^d} \le C\left(\| d(0) \|_{\rX_\gamma^d} + \| d \|_{\E_T^d}\right).
\end{equation*}
\end{lem}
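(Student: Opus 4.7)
The only nontrivial point is to secure a constant $C$ that does not depend on $T$. My plan is to extend $z$ from $[0,T]$ to a function $\tilde z$ defined on the whole half-line $[0,\infty)$ and belonging to $\E_\infty \coloneqq \rW^{1,p}(0,\infty;\rX_0) \cap \rL^p(0,\infty;\rX_1)$, with $\|\tilde z\|_{\E_\infty}$ controlled by $\|z\|_{\E_T} + \|z(0)\|_{\rX_\gamma}$, and then to invoke the classical trace embedding $\E_\infty \hookrightarrow \BUC([0,\infty);\rX_\gamma)$, whose embedding constant is intrinsic to $\rX_0$, $\rX_1$, $p$ and hence automatically $T$-independent.

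The first ingredient I would use is the identification of $\rX_\gamma$ with the real interpolation space $(\rX_0,\rX_1)_{1-1/p,p}$ available by the analogue of \cite[Lemma~5.3]{BBH:23} already invoked in the paper. Via this identification, the trace operator $\E_\infty \to \rX_\gamma$, $f \mapsto f(0)$, is surjective and admits a bounded right inverse, so there exists $w \in \E_\infty$ with $w(0) = z(0)$ and $\|w\|_{\E_\infty} \le C \|z(0)\|_{\rX_\gamma}$ for a universal constant $C$. This is the only step that relies on abstract interpolation theory; I expect it to be the main obstacle, in the sense that it is what dictates the particular form of the estimate.

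With $w$ in hand, I would define the extension by reflection and patching,
\begin{equation*}
    \tilde z(t) \coloneqq \begin{cases} z(t), & t \in [0,T], \\ z(2T - t) + w(t) - w(2T - t), & t \in [T, 2T], \\ w(t), & t \in [2T, \infty). \end{cases}
\end{equation*}
A direct check gives continuity at $t = T$, where both sides equal $z(T)$, and at $t = 2T$, where both sides equal $w(2T)$, so no singular part appears in the weak time derivative of $\tilde z$; hence $\tilde z \in \E_\infty$. Reflection preserves the $\rL^p$-in-time norms by substitution, and summing contributions over the three subintervals yields
\begin{equation*}
    \|\tilde z\|_{\E_\infty} \le C(\|z\|_{\E_T} + \|w\|_{\E_\infty}) \le C(\|z\|_{\E_T} + \|z(0)\|_{\rX_\gamma}).
\end{equation*}
Applying the trace embedding to $\tilde z$ and restricting to $[0,T]$, where $\tilde z = z$, then yields the first inequality. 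The second inequality follows verbatim by the same argument applied to the decoupled $d$-component alone, now with ground space $\rL^q(\cF_0)^3$, regularity space $\rW^{2,q}(\cF_0)^3$ incorporating the homogeneous Neumann boundary condition, and trace space $\rX_\gamma^d = \rB_{qp}^{2-\nicefrac{2}{p}}(\cF_0)^3$; no coupling to the other variables enters.
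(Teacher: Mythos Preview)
Your proposal is correct; the extension-by-reflection argument you give is a standard route to this type of uniform-in-time trace estimate, and all the continuity and norm checks you sketch go through as stated. The paper itself does not prove the lemma at all: it simply records the inequality as a ``well known result'' and moves on, so there is no proof in the paper to compare against. Your write-up therefore supplies more detail than the authors do.
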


Next, we prove the maximal regularity of the linearized problem \eqref{eq:linearization loc}.

\begin{prop}\label{prop:max reg hom IVs}
Let $0 < T_0 \le \infty$, $T \in (0,T_0)$, $p,q \in (1,\infty)$ such that \eqref{eq:cond p and q} holds, and consider the reference solution $d^*$ from \autoref{prop:reference sol}, $(f_1,f_2,f_3,f_4) \in \F_T$ and $(v_0,d_0 - d_*,\ell_0,\omega_0) = (0,0,0,0)$. 
Then there is a unique solution $(\tv,\td,\tell,\tomega,\tp) \in \prescript{}{0}{\tE_T}$ to \eqref{eq:linearization loc} such that for the maximal regularity constant $\Cmr = \Cmr(T_0) > 0$ independent of $T$, we have the estimate
\begin{equation*}
    \left\| (\tv,\td,\tell,\tomega,\tp) \right\|_{\tE_T} \le \Cmr \cdot \| (f_1,f_2,f_3,f_4) \|_{\F_T}.
\end{equation*}
\end{prop}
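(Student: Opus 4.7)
My plan is to exploit the upper triangular structure of \eqref{eq:linearization loc}: the equation $\partial_t d - \Delta d = f_2$ with homogeneous Neumann boundary condition and zero initial datum is fully decoupled from the fluid--structure block, while the block governing $(v,p,\ell,\omega)$ depends on $d$ only through the (now forcing-like) coupling term $B(d^*) d$, because $d^* \in \BUC([0,T_0];\rX_\gamma^d)$ is given. I will therefore resolve the director part first, then absorb $B(d^*)\td$ into the right-hand side of the fluid--structure system, and close with \cite[Theorem~4.1]{GGH:13}.

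First I would invoke the classical maximal $\rL^p$-regularity of the Neumann Laplacian on the bounded $\rC^3$-domain $\cF_0$ (up to a shift that is absorbed by $T_0$) to obtain a unique solution $\td \in \prescript{}{0}{\E_T^d}$ of
\begin{equation*}
    \partial_t d - \Delta d = f_2 \tin (0,T)\times\cF_0, \enspace \partial_\nu d = 0 \ton (0,T)\times\partial\cF_0, \enspace d(0) = 0,
\end{equation*}
satisfying $\|\td\|_{\E_T^d} \le C_1(T_0)\,\|f_2\|_{\F_T^d}$ with $C_1$ independent of $T\in(0,T_0)$. Next, since \eqref{eq:cond p and q} yields $\rX_\gamma^d \hookrightarrow \rC^1(\overline{\cF_0})$, the definition \eqref{eq:def B} of $B$, which couples first-order derivatives of $d^*$ with second-order derivatives of $\td$, gives
\begin{equation*}
    \| B(d^*)\td \|_{\F_T^v} \le C\, \|d^*\|_{\BUC([0,T_0];\rX_\gamma^d)}\,\|\td\|_{\E_T^d},
\end{equation*}
so $B(d^*)\td$ is a legitimate $\F_T^v$-datum. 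Setting $\tilde f_1 := f_1 - B(d^*)\td$, the remaining problem is precisely the linearized fluid--rigid body system with stress tensor $\rT(v,p)$ from \eqref{eq:lin stress tensor}, trivial initial data (so the compatibility conditions are met) and right-hand side $(\tilde f_1, f_3, f_4)$. Applying \cite[Theorem~4.1]{GGH:13} in the present geometry produces $(\tv,\tp,\tell,\tomega) \in \prescript{}{0}{\E_T^v} \times \E_T^p \times \rW^{1,p}(0,T)^3 \times \rW^{1,p}(0,T)^3$ with
\begin{equation*}
    \|\tv\|_{\E_T^v} + \|\tp\|_{\E_T^p} + \|\tell\|_{\rW^{1,p}(0,T)} + \|\tomega\|_{\rW^{1,p}(0,T)} \le C_2(T_0)\bigl(\|\tilde f_1\|_{\F_T^v}+\|f_3\|_{\F_T^\ell}+\|f_4\|_{\F_T^\omega}\bigr),
\end{equation*}
and concatenating with the estimate of Step~1 produces the asserted bound with $\Cmr = \Cmr(T_0,\|d^*\|_{\BUC([0,T_0];\rX_\gamma^d)})$. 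Uniqueness follows from uniqueness in each of the two subproblems applied to the difference of two solutions.

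The main obstacle is the $T$-uniformity of the two constants $C_1(T_0)$ and $C_2(T_0)$. For the Neumann Laplacian this is standard once one works with a shifted operator on the bounded domain, with the shift contributing only a $T_0$-dependent factor in the back-substitution. For the fluid--structure block, uniformity in $T\in(0,T_0)$ is built into \cite[Theorem~4.1]{GGH:13} because the linearization is autonomous on a fixed reference domain; the only delicate point is to verify that the extension/restriction from the full interval $(0,T_0)$ to $(0,T)$ preserves the norms without introducing $T$-dependent constants, which is done using the standard extension operator on $\prescript{}{0}{\E_T}$ that has operator norm bounded uniformly in $T\in(0,T_0)$. The coupling step itself is then a routine perturbation: because $B(d^*)$ is strictly lower order relative to $\Delta$ on $\td$ in $\rL^p(\rL^q)$, no smallness of $d^*$ is required, and one could equivalently set up Step~3 as a perturbation of the purely decoupled problem by a bounded operator from $\E_T^d$ into $\F_T^v$.
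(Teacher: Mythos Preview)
Your proposal is correct and follows essentially the same route as the paper: exploit the upper triangular structure, solve the decoupled Neumann heat equation for $\td$ first, absorb $B(d^*)\td$ into the fluid forcing, and then invoke \cite[Theorem~4.1]{GGH:13} for the fluid--structure block. The only cosmetic difference is that the paper bounds $\|d^*\|_{\BUC([0,T];\rX_\gamma^d)}$ via \autoref{lem:est of BUC trace by MR space and IVs} by $\|d^*\|_{\E_{T_0}^d}+\|d_0\|_{\rX_\gamma^d}$ (so that the $T$-independence is manifest), whereas you use the $\BUC([0,T_0];\rX_\gamma^d)$-norm directly; and the paper obtains $T$-uniformity of the constants simply from the homogeneous initial values rather than through a shift-and-back-substitute argument, but both justifications are standard and equivalent here.
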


\begin{proof}
The proof relies on the upper triangular structure of the linearized interaction problem and the maximal regularity of the fluid-structure system as established in \cite[Theorem~4.1]{GGH:13} in conjunction with the maximal regularity of the Neumann Laplacian operator.
More precisely, thanks to $f_2 \in \F_T^d$ and $d(0) = 0$, the maximal regularity of the Neumann Laplacian operator first yields $\td \in \E_T^d$ such that $\| \td \|_{\E_T} \le C \| f_2 \|_{\F_T}$, and the constant $C > 0$ is $T$-independent in view of the homogeneous initial values.
Next, we can insert the resulting $\td$ into the fluid equation and define $\Tilde{f_1} \coloneqq f_1 - B(d^*) \td \in \F_T^{v}$.
It then remains to investigate the system given by
\begin{equation*}
\left\{
    \begin{aligned}
        \partial_t {v} -\Delta {v} +\nabla {p} &=\Tilde{f_1}, \tand \mdiv {v} = 0, &&\tin (0,T) \times \cF_0,\\
        \mS({\ell})' + \int_{\del \cS_0} \rT(v,p) N \rd \Gamma &= f_3, \tand J_0({\omega})' + \int_{\del \cS_0} y \times \rT(v,p) N \rd \Gamma = f_4, &&\tin (0,T),\\ 
        {v}&={\ell}+{\omega}\times y, &&\ton (0,T) \times \partial\cS_0,\\
        {v}&=0, &&\ton (0,T) \times \partial \cO,\\
        v(0) = 0, \enspace \ell(0) &= 0, \tand \omega(0) = 0.
    \end{aligned}
\right.
\end{equation*}
From \cite[Theorem~4.1]{GGH:13}, it then follows that this system admits a unique solution $(\tv,\tp,\tell,\tomega)$ such that
\begin{equation}\label{eq:max reg est fluid-structure part}
    \left\| \tv \right\|_{\E_T^v} + \left\| \tp \right\|_{\rL^p(0,T;\rW^{1,q}(\cF_0) \cap \rL_0^q(\cF_0))} + \left\|(\tell,\tomega) \right\|_{\rW^{1,p}(0,T)} \le C\left(\left\| \Tilde{f_1} \right\|_{\F_T^v} + \| f_3 \|_{\F_T^\ell} + \| f_4 \|_{\F_T^\omega}\right),
\end{equation}
where $C > 0$ is independent of $T$ thanks to the homogeneous initial values.
It remains to estimate $\Tilde{f_1}$ in $\F_T^v = \rL^p(0,T;\rL^q(\cF_0)^3)$.
From \autoref{lem:est of BUC trace by MR space and IVs}, the above maximal regularity estimate of the heat equation with zero initial data and the existence of the reference solution $d^*$ up to time $T_0$, we deduce the estimate
\begin{equation*}
    \begin{aligned}
        \| B(d^*) \td \|_{\F_T^{v}}
        &\le C \| d^* \|_{\BUC([0,T];\rX_\gamma^{d})} \| \td \|_{\E_T^{d}}\le C \left(\| d^* \|_{\E_T^{d}} + \| d_0 \|_{\rX_\gamma^{d}}\right) \| f_2 \|_{\F_T^{d}}\le C \left(\| d^* \|_{\E_{T_0}^{d}} + \| d_0 \|_{\rX_\gamma^{d}}\right) \| f_2 \|_{\F_T^{d}},
    \end{aligned}
\end{equation*}
and we observe that $\| d^* \|_{\E_{T_0}^{d}}$ does not depend on $T$.
The latter estimate and~\eqref{eq:max reg est fluid-structure part} as well as the above $\td \in \E_T^d$ finally yield the desired solution $(\tv,\td,\tell,\tomega,\tp) \in \prescript{}{0}{\tE_T}$ to \eqref{eq:linearization loc} with homogeneous initial values.
The maximal regularity estimate is valid with constant $\Cmr > 0$ independent of $T$ in view of the homogeneous initial values and the preceding estimate of $\| B(d^*) \td \|_{\F_T^{v}}$.
\end{proof}

After establishing the maximal regularity results with regard to the local well-posedness, we now concentrate on the situation of the global well-posedness.
As the latter also relies on a fixed point argument, we aim for maximal $\rL^p([0,\infty))$-regularity of the linearization around constant equilibria.
The latter notion of maximal $\rL^p([0,\infty))$-regularity is generally a strong property that requires the underlying operator to be invertible, see e.g.\ \cite[Proposition~3.5.2]{PS:16}, which can often only be guaranteed upon introducing a shift.
Therefore, for $\mu \ge 0$, we consider the linearization
\begin{equation}\label{eq:linearization global}
\left\{
    \begin{aligned}
        \partial_t {v} +(\mu-\Delta) {v} +\nabla {p} =g_1, \enspace \mdiv {v} = 0, \tand {\partial_t {d}}  + (\mu-\Delta) {d} &= g_2, &&\tin (0,\infty) \times \cF_0,\\
        \mS({\ell})' + \mu \ell + \int_{\del \cS_0} \rT(v,p) N \rd \Gamma &= g_3, &&\tin (0,\infty),\\ 
        J_0({\omega})' + \mu \omega + \int_{\del \cS_0} y \times \rT(v,p) N \rd \Gamma &= g_4, &&\tin (0,\infty),\\
        {v}&=\ell + \omega \times y, &&\ton (0,\infty) \times \partial\cS_0,\\
        {v}&=0, &&\ton (0,\infty) \times \partial \cO,\\
        \partial_\nu d &= 0, &&\ton (0,\infty) \times \partial \cF_0,\\
        v(0) = v_0, \enspace d(0) = d_0 - d_*, \enspace \ell(0) = \ell_0, \tand \omega(0) &= \omega_0.
    \end{aligned}
\right.
\end{equation}

Maity and Tucsnak studied in \cite{MT:18} the interaction problem of a rigid body immersed in a viscous incompressible fluid.
Their handling of the linearized problem relies on a so-called ``monolithic approach'', i.e., the fluid and rigid body equations are considered as one complete system, and an equivalent formulation in terms of the {\em fluid-structure operator} and the pressure is provided.
The investigation of the {\em fluid-structure semigroup} relies on a decoupling method.
For a decoupling based on viewing linearized fluid-structure problems as boundary controlled fluid systems with dynamic boundary feedback, we also refer to \cite{MT:17}.
Another way prove the following result would be to use the decoupling approach from \cite{BBH:23} based on a different lifting method and the observation that maximal regularity is preserved under similarity transforms.

Combining the considerations in \cite[Sections~3--5]{MT:18}, and employing the maximal $\rL^p([0,\infty))$-regularity of the Neumann Laplacian up to a shift together with the fact that the $d$-equation is decoupled in \eqref{eq:linearization global}, we get the following result.

\begin{prop}\label{prop:max zero infty reg up to shift}
Let $p,q \in (1,\infty)$ such that \eqref{eq:cond p and q} holds true, and consider $(g_1,g_2,g_3,g_4) \in \F_\infty$, $d_* \in \R^3$ constant with $|d_*| = 1$ as well as $(v_0,d_0,\ell_0,\omega_0) \in \rX_\gamma$.
Then there exists $\mu_0 \ge 0$ such that for all $\mu > \mu_0$, there exists a unique solution $(v,d,\ell,\omega,p) \in \tE_\infty$ to \eqref{eq:linearization global}.
Moreover, for $C > 0$, we get the estimate
\begin{equation*}
    \left\| (v,d,\ell,\omega,p) \right\|_{\tE_\infty} \le C \left(\| (g_1,g_2,g_3,g_4) \|_{\F_\infty} + \| (v_0,d_0 - d_*,\ell_0,\omega_0) \|_{\rX_\gamma}\right).
\end{equation*}
\end{prop}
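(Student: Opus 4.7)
The plan is to exploit the upper triangular structure of \eqref{eq:linearization global}: the director equation is completely decoupled from the fluid-structure part, so we can treat the two blocks independently and then combine the estimates. Since we seek maximal $\rL^p([0,\infty))$-regularity rather than merely maximal $\rL^p([0,T])$-regularity, the key extra ingredient relative to \autoref{prop:max reg hom IVs} is invertibility of the underlying generators, which the shift parameter $\mu$ will provide.

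\textbf{Step 1: the director block.} For $\mu > 0$, the shifted Neumann Laplacian $\mu - \Delta_N$ on $\rL^q(\cF_0)^3$ generates an exponentially stable analytic semigroup; since it is moreover invertible and has the bounded $\mathrm{H}^\infty$-calculus on the sector of its range, a standard Dore-type theorem (cf.~\cite[Chapter~3]{PS:16}) yields maximal $\rL^p([0,\infty))$-regularity. Thus for any $g_2 \in \F_\infty^d$ and $d_0 - d_* \in \rX_\gamma^d$ satisfying $\partial_\nu(d_0-d_*)=0$ on $\partial \cF_0$ (which is guaranteed by $z_0 \in \rX_\gamma$), there is a unique $d \in \E_\infty^d$ with
\begin{equation*}
    \| d \|_{\E_\infty^d} \le C \bigl(\| g_2 \|_{\F_\infty^d} + \| d_0 - d_* \|_{\rX_\gamma^d}\bigr).
\end{equation*}

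\textbf{Step 2: the fluid-structure block.} With $d$ now fixed, the remaining unknowns $(v,p,\ell,\omega)$ solve a shifted linearized fluid-structure problem with data $(g_1, g_3, g_4)$ and initial values $(v_0,\ell_0,\omega_0)$. Following the monolithic approach of Maity and Tucsnak \cite{MT:18}, we rewrite this system as an abstract Cauchy problem in $\rL_\sigma^q(\cF_0) \times \R^3 \times \R^3$ governed by the fluid-structure operator $A_{\mathrm{FS}}$, with the pressure reconstructed from a Neumann problem once $v$ is known. By \cite[Sections~3--5]{MT:18}, $A_{\mathrm{FS}}$ generates an analytic semigroup on this product space with the compatibility-driven domain encoded precisely by the trace conditions appearing in $\rX_\gamma$. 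For $\mu > \mu_0$ sufficiently large, $\mu + A_{\mathrm{FS}}$ is invertible with uniformly sectorial resolvent in a right half-plane, hence admits maximal $\rL^p([0,\infty))$-regularity by \cite[Proposition~3.5.2]{PS:16} together with the $\mathcal{R}$-boundedness inherent in the Maity-Tucsnak setting. This gives a unique $(v,\ell,\omega) \in \E_\infty$ and a pressure $p \in \E_\infty^p$, with the uniqueness of $p$ enforced by the intersection with $\rL_0^q(\cF_0)$ in \eqref{eq:adjusted max reg space}, and the estimate
\begin{equation*}
    \|(v,\ell,\omega)\|_{\E_\infty} + \|p\|_{\E_\infty^p} \le C\bigl(\|g_1\|_{\F_\infty^v}+\|g_3\|_{\F_\infty^\ell}+\|g_4\|_{\F_\infty^\omega}+\|(v_0,\ell_0,\omega_0)\|_{\rX_\gamma}\bigr).
\end{equation*}

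\textbf{Step 3: combination.} Adding the two estimates produces the desired bound for $(v,d,\ell,\omega,p) \in \tE_\infty$ in terms of $\|(g_1,g_2,g_3,g_4)\|_{\F_\infty}$ and $\|(v_0,d_0-d_*,\ell_0,\omega_0)\|_{\rX_\gamma}$. Uniqueness follows from uniqueness in each block: any solution in $\tE_\infty$ has a $d$-component solving the shifted heat equation with homogeneous data (hence zero by Step~1), and then $(v,\ell,\omega,p)$ solves the homogeneous fluid-structure system, which has only the trivial solution by Step~2.

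\textbf{Expected main obstacle.} The technical heart is Step~2: verifying that the fluid-structure operator, after shift, actually admits maximal $\rL^p([0,\infty))$-regularity, not merely on bounded intervals. This requires combining analyticity (from \cite{MT:18}) with $\mathcal{R}$-sectoriality of $A_{\mathrm{FS}}$ on a half-plane and then invoking invertibility via the shift. One must also check that the trace space $\rX_\gamma$ in \eqref{eq:trace space}, which incorporates the boundary matching $v_0=\ell_0+\omega_0\times y$ on $\partial \cS_0$, coincides with the real interpolation space $(\rX_0,\rX_1)_{1-1/p,p}$ associated with the fluid-structure generator, so that the initial value $(v_0,\ell_0,\omega_0)$ is admissible and contributes the correct trace norm on the right-hand side. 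Once these structural points are in place, the remaining estimates are routine semigroup bookkeeping.
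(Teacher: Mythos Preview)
Your proposal is correct and matches the paper's approach: the paper does not give a detailed proof but simply notes, in the sentence preceding the proposition, that the result follows by combining the considerations in \cite[Sections~3--5]{MT:18} for the fluid-structure block with the maximal $\rL^p([0,\infty))$-regularity of the shifted Neumann Laplacian for the decoupled $d$-equation. One minor remark: in \eqref{eq:linearization global} the coupling term $B(d^*)d$ present in \eqref{eq:linearization loc} has been dropped, so the system is actually block-diagonal rather than merely upper triangular, and your Step~2 does not need $d$ at all.
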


\section{Proof of the local strong well-posedness}\label{sec:proof local well-posedness}

In this section, we prove the local strong well-posedness of the interaction problem with the rigid body.
We describe the underlying fixed point argument in more details in \autoref{ssec:fixed point arg}.
In \autoref{ssec:estimates nonlinear terms}, we first discuss the procedure to determine the diffeomorphisms $X$ and $Y$ and then show estimates of the related terms in order to establish estimates of the nonlinear terms in a second step.
Finally, \autoref{ssec:proof of local wp} is dedicated to concluding \autoref{thm:local wp reformulated}, and to deducing \autoref{thm:local wp} therefrom.

\subsection{The fixed point argument}\label{ssec:fixed point arg}
\

As already done previously, we denote the principle variable by $z = (v,d,\ell,\omega)$ and also employ the notation $\tz$, $z^*$ or $z_i$.
In the sequel, we will reformulate the question of finding a unique strong solution $(z,p)$ to the transformed system of equations \eqref{eq:cv1}--\eqref{eq:cv3} as a fixed point problem.
To this end, we recall the reference solution $(z^*,p^*)$ from \autoref{prop:reference sol}.
For a solution $(z,p)$ to \eqref{eq:cv1}--\eqref{eq:cv3}, we set
\begin{equation*}
    \hv \coloneqq v-v^{*}, \enspace \hd \coloneqq d-d^{*}, \enspace \hell \coloneqq \ell-\ell^{*}, \enspace \homega \coloneqq \omega-\omega^{*}, \tand \hp:= p-p^{*}.
\end{equation*}
Hence, $(\hz,\hp) = (\hv,\hd,\hell,\homega,\hp)$ is a solution to the system of equations
\begin{equation}\label{eq:syst for fixed point}
\left\{
    \begin{aligned}
        \partial_t {\hv} -\Delta {\hv} +\nabla {\widehat{p}} + B(d^{*}){\hd}&=F_1, \enspace \mdiv {\hv}=0, \tand {\partial_t {\hd}}  - \Delta {\hd} = F_2, &&\tin (0,T) \times \cF_0,\\
        \mS({\hell})' + \int_{\del \cS_0} \rT(\hv,\hp) N \rd \Gamma &= F_3, &&\tin (0,T),\\ 
        J_0({\homega})' + \int_{\del \cS_0} y \times \rT(\hv,\hp) N \rd \Gamma &=F_4, &&\tin (0,T),\\
        {\hv}&={\hell}+{\homega}\times y, &&\ton (0,T) \times \partial\cS_0,\\
        {\hv}&=0, &&\ton (0,T) \times \partial \cO,\\
        \partial_\nu \hd &= 0, &&\ton (0,T) \times \partial \cF_0,\\
        \hv(0) = 0, \enspace \hd(0) &= 0, \enspace \hell(0) = 0, \tand \homega(0) = 0.
    \end{aligned}
\right.
\end{equation}
Furthermore, for $B(d^*) \hd$ as in \eqref{eq:def B}, $\cB$ from \autoref{sec:change of var} and $\rT(v,p)$ as in \eqref{eq:lin stress tensor}, we define
\begin{equation*}
    \cQ(d^{*},\hd) \coloneqq B(d^{*})\hd-\cB(d^{*}+\hd)(d^{*}+\hd), \tand \cT(v,p) \coloneqq Q(t)^\top \rT(Q(t) v(t,y),p(t,y)) Q(t).
\end{equation*}
Additionally recalling the transformed terms from \autoref{sec:change of var}, we find that $F_1$, $F_2$, $F_3$ and $F_4$ are given by
\begin{equation}\label{eq:nonlinear terms F_i}
    \begin{aligned}
        F_1(\hz,\hp) &\coloneqq (\cL_1-\Delta)(\hv+v^{*}) -\cM(\hv+v^{*})-\cN(\hv+v^{*})- (\cG -\nabla)(\hp+p^{*}) + \cQ(d^{*},\hd),\\
        F_2(\hz) &\coloneqq (\cL_2-\Delta)(\hd+d^{*}) - (\nabla (\hd+d^{*})\cdot \partial_t Y + (v\cdot \nabla) (\hd+d^{*})) + |\nabla (\hd+d^{*})|^2 (\hd+d^{*}+d_*),\\
        F_3(\hz,\hp) &\coloneqq -\mS((\homega+\omega^{*})\times(\hell+\ell^{*}))\\
        &\quad +\int\limits_{\partial \cS_0} \left((\rT - \cT)(\hv,\hp) + Q(t)^\top \nabla \left(\hd+d^{*}\right) \left(\nabla\left( \hd+d^{*}\right)\right)^\top Q(t) \right) N\rd \Gamma, \tand\\
        F_4(\hz,\hp) &\coloneqq (\homega+\omega^{*})\times (J_0(\homega+\omega^{*}))\\
        &\quad + \int\limits_{\partial \cS_0} y \times \left((\rT - \cT)(\hv,\hp) + Q(t)^\top \nabla \left(\hd+d^{*}\right) \left(\nabla \left(\hd+d^{*}\right)\right)^\top Q(t)\right) N\rd \Gamma.
    \end{aligned}
\end{equation}

In the following, we fix $T_0$ and $R_0 > 0$, and we consider $0 < T \le T_0$ as well as $0 < R \le R_0$.
For $p, q \in (1,\infty)$ such that \eqref{eq:cond p and q} holds, and recalling $\E_T$ as well as $\E_T^p$ and $\tE_T$ from \eqref{eq:sol space} and \eqref{eq:adjusted max reg space}, we define
\begin{equation}\label{eq:space fixed point arg and solution map}
    \cK_T^R \coloneqq \left\{(\tz,\tp) \in \prescript{}{0}{\E_T} \times \E_T^{p} : \| (\tz,\tp) \|_{\tE_T} \le R\right\}, \tand \Phi_T^R \colon \cK_T^R \to \prescript{}{0}{\E_T} \times \E_T^{p}, \twith \Phi_T^R(\tz,\tp) \coloneqq (\hz,\hp),
\end{equation}
where $(\hz,\hp)$ denotes the solution to \eqref{eq:syst for fixed point} with right-hand sides $F_1(\tz,\tp)$, $F_2(\tz)$, $F_3(\tz,\tp)$ and $F_4(\tz,\tp)$, and $(\tz,\tp) \in \cK_T^R$. 
It follows from \autoref{prop:max reg hom IVs} that $\Phi_T^R$ is well-defined provided $(F_1,F_2,F_3,F_4) \in \F_T$, where $\F_T$ is defined in \eqref{eq:data space}.
With regard to \eqref{eq:syst for fixed point}, the existence and uniqueness of a solution to \eqref{eq:cv1}--\eqref{eq:cv3} is equivalent to $\Phi_T^R$ admitting a unique fixed point upon adding the reference solution $(z^*,p^*)$.

\subsection{Estimates of the nonlinear terms}\label{ssec:estimates nonlinear terms}
\ 

Throughout this section, for $T_0 > 0$ and $R_0 > 0$ fixed, let $T \in (0,T_0]$ and $R \in (0,R_0]$.
We consider $(\tz,\tp)$, $(\tz_1,\tp_1)$, $(\tz_2,\tp_2) \in \cK_T^R$, and for the reference solution $(z^*,p^*)$ resulting from \autoref{prop:reference sol}, we define $(z,p) \coloneqq (\tv + v^*,\td + d^*,\tell + \ell^*,\tomega + \omega^*,\tp + p^*)$ and set $(z_i,p_i)$, $i=1,2$, likewise.
It then follows that
\begin{equation}\label{eq:est sol}
    \| (z,p) \|_{\tE_T} \le R + C_T^*,
\end{equation}
where $C_T^*$ from \autoref{rem:norm of reference sol} is the norm of $(z^*,p^*)$.
We also denote by $C_0 \coloneqq \| z_0 \|_{\rX_\gamma}$ the norm of the initial values.
Next, we comment on the procedure to deduce $X$ and $Y$ from \autoref{sec:change of var}.

\begin{rem}\label{rem:procedure to determine diffeos}
Let $\ell$, $\omega \in \rW^{1,p}(0,T)^3$ be given.
\begin{enumerate}[(i)]
    \item First, recover $Q \in \rW^{2,p}(0,T)^{3 \times 3}$ from solving the initial value problem
    \begin{equation}\label{eq:deduction Q}
        \Dot{Q}^{\top}(t) = M(t) Q^{\top}(t), \enspace Q^{\top}(0) = \Id,
    \end{equation}
    where the matrix-valued function $M(t)$ fulfills $M(t) x = \omega(t) \times x$ for all $t \in (0,T)$ and $x \in \R^3$.
    \item Equipped with $Q$, we can deduce the original body velocities from
    \begin{equation}\label{eq:deduction h' and Omega}
        h'(t) = Q^{\top}(t) \ell(t) \tand \Omega(t) = Q^{\top}(t) \omega(t).
    \end{equation}
    As we assume that the center of gravity $h$ is located at the origin at time zero, i.e., $h(0) = 0$, it can be derived from $h(t) = \int_0^t h'(s) \rd s$.
    \item It is then possible to insert $b$ as defined in \eqref{eq:rhs b} into \eqref{eq:IVP X}, so we obtain $X$.
    As in \eqref{eq:rhs bY}, we set $b^{(Y)} \coloneqq \rJ_X^{-1} b(\cdot,X)$ and then solve \eqref{eq:IVP Y} to get the inverse diffeomorphism $Y$ of $X$.
\end{enumerate} 
\end{rem}

Concerning the notation, given $(\ell_1,\omega_1)$, $(\ell_2,\omega_2) \in \rW^{1,p}(0,T)^6$, the subscript $i \in \{1,2\}$ indicates that the respective objects correspond to $(\ell_i,\omega_i)$ and are deduced therefrom by the procedure described in \autoref{rem:procedure to determine diffeos}.
This is especially valid for the diffeomorphisms $X_i$ and $Y_i$ related to $\ell_i$ and $\omega_i$.
Moreover, we use $\| \cdot \|_\infty$ and $\| \cdot \|_{\infty,\infty}$ to denote the $\rL^\infty$-norm in time and in time and space, respectively, and we will also write $\| \cdot \|_{p,q}$ instead of $\| \cdot \|_{\rL^p(0,T;\rL^q(\cF_0))}$.
For a proof of the following lemma, we refer to \cite[Section~6.1]{GGH:13}.

\begin{lem}\label{lem:props of the transform}
Let $(\ell_1,\omega_1)$, $(\ell_2,\omega_2) \in \rW^{1,p}(0,T)^6$.
\begin{enumerate}[(a)]
    \item For $i=1,2$, we have $X_i$, $Y_i \in \rC^1\left(0,T;\rC^\infty(\R^3)^3\right)$ as well as the estimates
    \begin{equation*}
        \begin{aligned}
            \| \partial^{\alpha} X_i \|_{\infty,\infty} + \| \partial^{\alpha} Y_i \|_{\infty,\infty} &\le C, \tand\\
            \| \partial^{\beta}(X_1 - X_2) \|_{\infty,\infty} + \| \partial^{\beta}(Y_1 - Y_2) \|_{\infty,\infty} &\le CT (\| \ell_1 - \ell_2 \|_{\infty} + \| \omega_1 - \omega_2 \|_{\infty})
        \end{aligned}
    \end{equation*}
    for all multi-indices $\alpha$ and $\beta$ with $1 \le |\alpha| \le 3$ and $0 \le |\beta| \le 3$.
    The constants only depend on $K_i \coloneqq \| \ell_i \|_{\infty} + \| \omega_i \|_{\infty}$ and not directly on $\ell_i$ or $\omega_i$.
    In particular, for $i,k,m \in \{1,2,3\}$, we have
    \begin{equation*}
        \| \partial_k \partial_i Y_m \|_{\infty,\infty} \le CT(R + C_T^*).
    \end{equation*}
    \item For $(h'_i,\Omega_i)$ deduced from $(\ell_i,\omega_i)$ as described in \eqref{eq:deduction h' and Omega}, the estimates
    \begin{equation*}
        \| h'_1 - h'_2 \|_{\infty} \le C (\| \ell_1 - \ell_2 \|_\infty + \| \omega_1 - \omega_2 \|_\infty), \tand \| \Omega_1 - \Omega_2 \|_\infty \le C \| \omega_1 - \omega_2 \|_\infty
    \end{equation*}
    are valid.
    In particular, for $K_i$ as above the matrix $Q_i$ from \eqref{eq:deduction Q} satisfies the estimates
    \begin{equation*}
        \| Q_1 - Q_2 \|_\infty \le C T \| M_1 - M_2 \|_\infty \le C T \| \omega_1 - \omega_2 \|_\infty, \tand \| Q_i \|_\infty + \| Q_i^\top \|_\infty \le C(1 + T K_i \mre^{T K_i}).
    \end{equation*}
    \item For all multi-indices $\beta$ with $0 \le |\beta| \le 3$, $b_i$ defined in \eqref{eq:rhs b} and associated to $h'_i$ and $\Omega_i$ fulfills
    \begin{equation*}
        \| \partial^\beta b_i \|_{\infty,\infty} \le C, \tand \| \partial^\beta (b_1 - b_2) \|_{\infty,\infty} \le C(\| h'_1 - h'_2 \|_\infty + \| \Omega_1 - \Omega_2 \|_\infty).
    \end{equation*}
    \item For all multi-indices $\beta$ with $0 \le |\beta| \le 3$, $b^{(Y_i)}$ defined in \eqref{eq:rhs bY} and associated to $h'_i$ and $\Omega_i$ satisfies
    \begin{equation*}
        \left\| \partial^\beta (b^{(Y_1)} - b^{(Y_2)}) \right\|_{\rL^\infty(0,T;\rC(\R^3))} \le C \| \partial^\beta(b_1 - b_2) \|_{\rL^\infty(0,T;\rC^1(\R^3))}.
    \end{equation*}
\end{enumerate}
\end{lem}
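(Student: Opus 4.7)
The natural order is (b), then (c), then (a), then (d): the matrix $Q$ governs the rigid motion, which determines the vector field $b$ that in turn drives the flow $X$, and finally $Y$ is obtained from $X$ and $b$. All four parts ultimately reduce to linear ODE estimates of Gronwall type combined with bounded-operator estimates for $\chi$-multiplication and the Bogovski\u{\i} operator $B_{\cF_0}$, whose boundedness from $\rC_c^\infty(\cF_0)$ into itself with divergence-free image is a standing property. The main technical obstacle is keeping track of the explicit dependence on $T$ in the difference estimates; this is the mechanism that produces the prefactor $T$ in the Lipschitz bounds of (a) and (b), and it must be traced carefully through each composition.

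\textbf{Part (b).} I would solve the matrix ODE $\dot Q_i^{\top}=M_i Q_i^{\top}$ with $Q_i^{\top}(0)=\Id$ by Picard iteration; since $\|M_i(t)\|\le \|\omega_i\|_\infty$, Gronwall gives $\|Q_i\|_\infty+\|Q_i^\top\|_\infty\le C(1+TK_i\mre^{TK_i})$ as claimed. For the difference, subtracting the two equations yields
\begin{equation*}
  \partial_t(Q_1^\top-Q_2^\top)=M_1(Q_1^\top-Q_2^\top)+(M_1-M_2)Q_2^\top,
\end{equation*}
so Gronwall applied to $\|(Q_1-Q_2)(t)\|$ (which vanishes at $t=0$) produces the bound $CT\|M_1-M_2\|_\infty\le CT\|\omega_1-\omega_2\|_\infty$. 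The estimates for $h'_i=Q_i^\top\ell_i$ and $\Omega_i=Q_i^\top\omega_i$ follow by the triangle inequality and the $Q_i$ bounds just derived.

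\textbf{Parts (c) and (a).} The explicit formula \eqref{eq:rhs b} writes $b_i$ as $\chi\cdot[m_i(\cdot-h_i)+h'_i]$ minus the Bogovski\u{\i} correction of its divergence. Since $\chi\in\rC^\infty$ with compact support away from $\partial\cO$, and since $B_{\cF_0}$ is bounded into any $\rC^k$ space (the input is smooth with compact support and mean zero), each spatial derivative $\partial^\beta b_i$ is controlled pointwise by polynomials in $\|h'_i\|_\infty,\|\Omega_i\|_\infty$ and $|h_i(t)|\le T\|h'_i\|_\infty$; this gives the uniform bound. The Lipschitz estimate follows from linearity in $(h'_i,h_i,m_i)$, the bounds $\|h_1-h_2\|_\infty\le T\|h'_1-h'_2\|_\infty$, and part (b). For (a), integrate the flow equation $\partial_t X_i=b_i(t,X_i)$ and differentiate in $y$ to obtain linear ODEs for $\partial^\alpha X_i$ with right-hand sides polynomial in lower-order derivatives of $X_i$ and in $\partial^{\alpha'}b_i$; Gronwall together with (c) yields the uniform bounds. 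For the Lipschitz part, the system for $X_1-X_2$ is
\begin{equation*}
  \partial_t(X_1-X_2)=b_1(t,X_1)-b_1(t,X_2)+(b_1-b_2)(t,X_2),
\end{equation*}
so Gronwall combined with $\|b_1-b_2\|_{\infty,\infty}\le C(\|\ell_1-\ell_2\|_\infty+\|\omega_1-\omega_2\|_\infty)$ and $(X_1-X_2)(0)=0$ produces the factor $T$. Differentiation in $y$ and repeating the argument up to $|\beta|=3$ gives the higher-order statements; the specific estimate $\|\partial_k\partial_i Y_m\|_{\infty,\infty}\le CT(R+C_T^*)$ then follows by inverting $\rJ_X$ (whose determinant is $1$) and using $\|\ell\|_\infty+\|\omega\|_\infty\lesssim R+C_T^*$ via the embedding $\rW^{1,p}(0,T)\hookrightarrow\rL^\infty(0,T)$ and \autoref{lem:est of BUC trace by MR space and IVs}.

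\textbf{Part (d).} From \eqref{eq:rhs bY} one has $b^{(Y_i)}=-\rJ_{X_i}^{-1}b_i(\cdot,X_i)$. Write
\begin{equation*}
  b^{(Y_1)}-b^{(Y_2)}=-(\rJ_{X_1}^{-1}-\rJ_{X_2}^{-1})b_1(\cdot,X_1)-\rJ_{X_2}^{-1}\bigl(b_1(\cdot,X_1)-b_2(\cdot,X_2)\bigr),
\end{equation*}
and split the second term further into $b_1(\cdot,X_1)-b_1(\cdot,X_2)$, controlled by $\|\nabla b_1\|_{\infty,\infty}\|X_1-X_2\|_{\infty,\infty}$, and $(b_1-b_2)(\cdot,X_2)$, controlled by (c). Since $\rJ_{X_i}$ has determinant $1$ and its entries are uniformly bounded in $\rC^k$ by (a), the inverse and its derivatives enjoy the same bounds, and differences $\rJ_{X_1}^{-1}-\rJ_{X_2}^{-1}=\rJ_{X_1}^{-1}(\rJ_{X_2}-\rJ_{X_1})\rJ_{X_2}^{-1}$ are controlled by (a). Applying the product rule to $\partial^\beta(b^{(Y_1)}-b^{(Y_2)})$ and collecting terms yields the asserted estimate, where the $\rC^1$ norm on the right appears because one derivative has to be spent composing with $X_i$ when differentiating $b_i(\cdot,X_i)$ in $y$.
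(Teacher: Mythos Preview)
Your sketch is correct and follows the standard route; in fact the paper does not give its own proof of this lemma but simply refers to \cite[Section~6.1]{GGH:13}, and what you outline is precisely the argument carried out there: first the linear matrix ODE for $Q_i$ via Gronwall, then the explicit structure of $b_i$ together with boundedness of the Bogovski\u{\i} operator, then Gronwall for the flow $X_i$ and its spatial derivatives, and finally the algebraic splitting for $b^{(Y_i)}$. One minor remark: the ``in particular'' estimate $\|\partial_k\partial_i Y_m\|_{\infty,\infty}\le CT(R+C_T^*)$ is obtained most directly by observing that for the identity transform (corresponding to $\ell=\omega=0$) the second derivatives of $Y$ vanish, so one simply applies the Lipschitz bound in (a) with $(\ell_2,\omega_2)=(0,0)$ together with the embedding $\rW^{1,p}(0,T)\hookrightarrow\rL^\infty(0,T)$; your route via the inverse Jacobian is equivalent but slightly less direct.
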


Next, we provide estimates of the covariant and contravariant metric tensors $g^{ij}$ and $g_{ij}$ and of the Christoffel symbol $\Gamma_{jk}^i$ defined in \autoref{sec:change of var}.
For $\ell$, $\ell_1$, $\ell_2$ and $\omega$, $\omega_1$, $\omega_2$ resulting from $\tell$, $\tell_1$, $\tell_2$, $\ell^*$ and $\tomega$, $\tomega_1$, $\tomega_2$, $\omega^*$ as described at the beginning of the section, we recover the diffeomorphisms $X$, $X_1$, $X_2$ and $Y$, $Y_1$, $Y_2$ as made precise in \autoref{rem:procedure to determine diffeos}.
The proof of the following lemma can be found in \cite[p.~1417]{GGH:13}.

\begin{lem}\label{lem:ests covariant contrvariant christoffel}
For all multi-indices $0 \le |\alpha| \le 1$, the covariant and contravariant tensor as well as the Christoffel symbol associated to $X$, $X_1$, $X_2$ and $Y$, $Y_1$, $Y_2$ satisfy the estimates
\begin{equation*}
    \| \partial^\alpha g^{ij} \|_{\infty,\infty} + \| \partial^\alpha g_{ij} \|_{\infty,\infty} + \| \partial^\alpha \Gamma_{jk}^i \|_{\infty,\infty} \le C, \tand
\end{equation*}
\begin{equation*}
    \| \partial^\alpha ((g_1)^{ij} - (g_2)^{ij}) \|_{\infty,\infty} + \| \partial^\alpha ((g_1)_{ij} - (g_2)_{ij}) \|_{\infty,\infty} + \| \partial^\alpha ((\Gamma_1)_{jk}^i - (\Gamma_2)_{jk}^i) \|_{\infty,\infty} \le C T \| (\tell_1 - \tell_2,\tomega_1 - \tomega_2) \|_\infty.
\end{equation*}   
\end{lem}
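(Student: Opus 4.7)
The plan is to reduce all estimates to the derivative bounds on $X$, $Y$ established in \autoref{lem:props of the transform}(a), since the quantities $g^{ij}$, $g_{ij}$, and $\Gamma^i_{jk}$ are simply polynomial expressions in the first and second derivatives of $X$ and $Y$. No inversion of matrices is involved by virtue of the paper's definitions, so all terms are algebraic.

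For the uniform bound, I would first observe that $g^{ij}$ is quadratic in $\nabla Y$ and $g_{ij}$ is quadratic in $\nabla X$. Applying $\partial^\alpha$ with $|\alpha|\le 1$ by the product rule therefore produces sums of terms involving derivatives of $X$ and $Y$ of order at most $2$. For $\Gamma^i_{jk}$, the definition already contains one derivative of $g_{ij}$ (hence second derivatives of $X$) multiplied by $g^{jk}$, so $\partial^\alpha \Gamma^i_{jk}$ involves at most third order derivatives of $X$ and second order derivatives of $Y$. All such derivatives are uniformly bounded in $\rL^\infty((0,T)\times\R^3)$ by \autoref{lem:props of the transform}(a), which gives the first estimate with a constant $C$ depending only on $K_i = \|\ell_i\|_\infty + \|\omega_i\|_\infty$.

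For the Lipschitz-type estimate, I would use the telescoping identity
\begin{equation*}
a_1 b_1 - a_2 b_2 = (a_1 - a_2) b_1 + a_2 (b_1 - b_2)
\end{equation*}
(iterated for triple products in the case of the Christoffel symbol) applied to the explicit formulas for $g^{ij}$, $g_{ij}$, and $\Gamma^i_{jk}$. After $\partial^\alpha$ is distributed by the product rule, each resulting term is a product in which one factor is a difference $\partial^\beta(X_1 - X_2)$ or $\partial^\beta(Y_1 - Y_2)$ with $0 \le |\beta| \le 3$, and the remaining factors are derivatives of $X_i$ or $Y_i$ of order between $1$ and $3$. The former are estimated by $CT(\|\tell_1 - \tell_2\|_\infty + \|\tomega_1 - \tomega_2\|_\infty)$ via the second part of \autoref{lem:props of the transform}(a), while the latter are uniformly bounded by the same lemma, yielding the claimed factor $CT\|(\tell_1 - \tell_2,\tomega_1 - \tomega_2)\|_\infty$.

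There is no genuine analytical obstacle here, since the inequalities follow mechanically from the product rule combined with the preceding lemma; the main care required is bookkeeping of the derivative orders to ensure that $|\beta| \le 3$ is never exceeded when $|\alpha| \le 1$, which holds precisely because $\Gamma^i_{jk}$ already contains second derivatives of $X$ and differentiating once more produces at worst third-order terms --- exactly the range in which \autoref{lem:props of the transform}(a) applies.
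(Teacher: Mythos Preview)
Your proposal is correct and is exactly the argument the paper has in mind: the paper does not spell out a proof but simply refers to \cite[p.~1417]{GGH:13}, where precisely this bookkeeping---writing $g^{ij}$, $g_{ij}$, $\Gamma^i_{jk}$ as polynomials in $\partial^\alpha X$, $\partial^\alpha Y$ with $1\le|\alpha|\le 3$, applying the product rule, telescoping differences, and invoking the bounds of \autoref{lem:props of the transform}(a)---is carried out. There is nothing to add.
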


In the following lemma, we discuss further estimates in order to handle the nonlinear terms.

\begin{lem}\label{lem:further aux ests}
Let $p,q \in (1,\infty)$ satisfy \eqref{eq:cond p and q}, and for the reference solution $(z^*,p^*)$ from \autoref{prop:reference sol} and $(\tz,\tp)$, $(\tz_i,\tp_i) \in \cK_T^R$, let $(z,p) = (\tz + z^*,\tp + p^*)$ and $(z_i,p_i) = (\tz_i + z^*,\tp_i + p^*)$, $i=1,2$.
Then
\begin{equation*}
    \| g^{jk} - \delta_{jk} \|_{\infty,\infty} \le C T(R + C_T^*), \tand \| \partial_j Y_k - \delta_{jk} \|_{\infty,\infty} \le C T (R + C_T^*).
\end{equation*}
\end{lem}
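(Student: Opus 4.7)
The plan is to exploit the initial condition $Y(0,y) = y$ together with the ODE \eqref{eq:IVP Y} in order to extract the factor $T$ via a simple time integration, with the remaining factor $R + C_T^*$ coming from the size of the body velocities. I would treat the estimate for $\partial_j Y_k - \delta_{jk}$ first, and then derive the bound on $g^{jk} - \delta_{jk}$ from it by a direct algebraic manipulation.

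Differentiating $\partial_t Y(t,x) = b^{(Y)}(t,Y(t,x))$ in $y_j$ yields
\begin{equation*}
    \partial_t (\partial_j Y_k)(t,y) = \sum_{m=1}^{3} (\partial_m b^{(Y)}_k)(t,Y(t,y)) \, \partial_j Y_m(t,y).
\end{equation*}
The uniform bound $\|\partial_j Y_m\|_{\infty,\infty} \le C$ from \autoref{lem:props of the transform}(a), combined with parts (c)--(d) of that lemma applied with the second pair $(\ell_2,\omega_2)$ set to zero (so that $b_2 = 0$ and $b^{(Y_2)} = 0$), gives
\begin{equation*}
    \|\partial_t (\partial_j Y_k)\|_{\infty,\infty} \le C \|b^{(Y)}\|_{\rL^\infty(0,T;\rC^1(\R^3))} \le C \|b\|_{\rL^\infty(0,T;\rC^2(\R^3))} \le C(\|h'\|_\infty + \|\Omega\|_\infty).
\end{equation*}
Using the relations $h' = Q^\top \ell$, $\Omega = Q^\top \omega$ and the bound $\|Q\|_\infty \le C$ from \autoref{lem:props of the transform}(b), together with the Sobolev embedding $\rW^{1,p}(0,T) \hookrightarrow \rL^\infty(0,T)$ (whose norm is uniform for $T \le T_0$, since $\tell(0) = \tomega(0) = 0$ and the initial values of $\ell^*,\omega^*$ are controlled by $\|z_0\|_{\rX_\gamma}$), I obtain $\|h'\|_\infty + \|\Omega\|_\infty \le C(R + C_T^*)$, where the constant may also depend on $T_0$ and on $\|z_0\|_{\rX_\gamma}$. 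Since $\partial_j Y_k(0,y) = \delta_{jk}$, an integration in time yields $\|\partial_j Y_k - \delta_{jk}\|_{\infty,\infty} \le C T (R + C_T^*)$.

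For the first estimate, I would simply write
\begin{equation*}
    g^{jk} - \delta_{jk} = \sum_{l=1}^{3} \bigl(\partial_l Y_j \, \partial_l Y_k - \delta_{lj}\delta_{lk}\bigr) = \sum_{l=1}^{3} \bigl[(\partial_l Y_j - \delta_{lj}) \partial_l Y_k + \delta_{lj} (\partial_l Y_k - \delta_{lk})\bigr],
\end{equation*}
and then invoke the uniform bound $\|\partial_l Y\|_{\infty,\infty} \le C$ together with the estimate just obtained to conclude the claim. The main obstacle is purely bookkeeping, namely to verify that the constant in the embedding $\rW^{1,p}(0,T) \hookrightarrow \rL^\infty(0,T)$ is genuinely uniform in $T \le T_0$; this follows in the standard way, using that $\tell,\tomega$ vanish at $t = 0$ (so $\|\tell\|_\infty \le T^{1/p'} R$) and that $\ell^*,\omega^*$ have initial values controlled by $\|z_0\|_{\rX_\gamma}$, allowing one to bound their sup-norms by $C(\|z_0\|_{\rX_\gamma} + C_T^*)$ with $C$ independent of $T \le T_0$.
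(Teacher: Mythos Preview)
Your argument is correct. The paper's proof is shorter and organized differently: it observes that the identity transform $X(t,y)=y$ corresponds to the body at rest ($\ell=\omega=0$), and then simply invokes the Lipschitz difference estimate of \autoref{lem:ests covariant contrvariant christoffel} (respectively \autoref{lem:props of the transform}(a)) with the second pair $(\ell_2,\omega_2)=(0,0)$, giving
\[
\|g^{jk}-\delta_{jk}\|_{\infty,\infty}\le CT\|(\ell,\omega)\|_\infty\le CT\|(\ell,\omega)\|_{\rW^{1,p}(0,T)}\le CT(R+C_T^*),
\]
and says the estimate for $\partial_j Y_k-\delta_{jk}$ follows likewise. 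You reach the same conclusion by integrating the ODE for $\partial_j Y_k$ from its initial value $\delta_{jk}$ and then reducing $g^{jk}-\delta_{jk}$ algebraically to that bound; this is a more hands-on route that essentially re-derives the special case of the Lipschitz estimate rather than quoting it. Both approaches hinge on the same observation (zero body velocity gives the identity) and both require the $T$-uniform control of $\|(\ell,\omega)\|_\infty$ that you discuss at the end; the paper's version is just a one-line application of the already established difference lemmas.
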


\begin{proof}
The identity transform $X(t,y) = y$ for all $t>0$ and $y \in \R^3$ corresponds to the situation of the body at rest, i.e., $\ell=\omega = 0$.
Therefore, \autoref{lem:ests covariant contrvariant christoffel}, $\rW^{1,p}(0,T) \hookrightarrow \rL^\infty(0,T)$ and \eqref{eq:est sol} yield
\begin{equation*}
    \| g^{jk} - \delta_{jk} \|_{\infty,\infty} \le CT \| (\ell,\omega) \|_{\infty} \le CT \| (\ell,\omega) \|_{\rW^{1,p}(0,T)} \le CT (R+C_T^*),
\end{equation*}
and the estimate of $\| \partial_j Y_k - \delta_{jk} \|_{\infty,\infty}$ follows likewise.
\end{proof}

The embeddings in the lemma below will be used frequently in the estimates of the nonlinear terms.

\begin{lem}\label{lem:embedding of max reg space}
Let $p, q \in (1,\infty)$ satisfy \eqref{eq:cond p and q}, and set $\rY_0 \coloneqq \rL^q_\sigma(\cF_0) \times \rL^q(\cF_0)^3 \times \R^3 \times \R^3$ as well as $\rY_1 \coloneqq \rW^{2,q}(\cF_0)^3 \cap \rL^q_\sigma(\cF_0) \times \rW^{2,q}(\cF_0)^3 \times \R^3 \times \R^3$.
\begin{enumerate}[(a)]
    \item For all $\beta \in (0,1)$, denoting by $\rY_\beta \coloneqq (\rY_0,\rY_1)_\beta$ the complex interpolation space, the embedding
    \begin{equation*}
        \E_T \hookrightarrow \rW^{1,p}(0,T;\rY_0) \cap \rL^p(0,T;\rY_1) \hookrightarrow \rH^{\beta,p}(0,T;\rY_{1-\beta}) \hookrightarrow \rH^{\beta,p}\left(0,T;\rH^{2(1-\beta),q}(\cF_0)^6 \times \R^6\right)
    \end{equation*}
    is valid.
    In particular, we have $\E_T^{d} \hookrightarrow \rL^\infty\left(0,T;\rW^{1,2q}(\cF_0)^3\right) \hookrightarrow \rL^\infty\left(0,T;\rW^{1,q}(\cF_0)^3\right)$.
    \item For $\rY_\gamma = (\rY_0,\rY_1)_{1-\nicefrac{1}{p},p}$, we have
    \begin{equation*}
        \E_T \hookrightarrow \rW^{1,p}(0,T;\rY_0) \cap \rL^p(0,T;\rY_1) \hookrightarrow \BUC([0,T];\rY_\gamma) \hookrightarrow \BUC\left([0,T];\rB_{qp}^{2-\nicefrac{2}{p}}(\cF_0)^6 \times \R^6\right).
    \end{equation*}
    In particular, $\E_T^d \hookrightarrow \BUC\left([0,T];\rC^1(\overline{\cF_0})^3\right) \hookrightarrow \rL^\infty\left(0,T;\rW^{1,\infty}(\cF_0)^3\right) \hookrightarrow \rL^\infty\left(0,T;\rL^\infty(\cF_0)^3\right)$.
\end{enumerate}
In general, only the embedding constants of the embeddings into $\rH^{\beta,p}(0,T;\rY_\beta)$ and $\BUC([0,T];\rY_\gamma)$ depend on time.
They can be chosen time-independent if zero initial values are considered.
\end{lem}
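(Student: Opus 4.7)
The plan is to derive both embeddings from standard interpolation theory applied to the pair $(\rY_0,\rY_1)$, and then specialize to the examples via Sobolev-type embeddings on $\cF_0$. First I would recall that $\E_T = \rW^{1,p}(0,T;\rY_0) \cap \rL^p(0,T;\rY_1)$ carries the structure of a Hilbert-scale-type intersection, so by the mixed derivative theorem (equivalently, a consequence of complex interpolation between $\rL^p(0,T;\rY_1)$ and $\rW^{1,p}(0,T;\rY_0)$ on a cylinder) one obtains for every $\beta \in (0,1)$ the continuous embedding
\begin{equation*}
    \rW^{1,p}(0,T;\rY_0) \cap \rL^p(0,T;\rY_1) \hookrightarrow \rH^{\beta,p}(0,T;\rY_{1-\beta}),
\end{equation*}
with $\rY_{1-\beta} = [\rY_0,\rY_1]_{1-\beta}$. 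For part (b), the trace method of real interpolation gives the well known continuous embedding
\begin{equation*}
    \rW^{1,p}(0,T;\rY_0) \cap \rL^p(0,T;\rY_1) \hookrightarrow \BUC([0,T];\rY_\gamma), \twith \rY_\gamma = (\rY_0,\rY_1)_{1-\nicefrac{1}{p},p}.
\end{equation*}

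Next I would identify the interpolation spaces componentwise. Since the structural constraints (solenoidality on the fluid component, the $\R^3$ factors) are preserved under both interpolation functors, and since the divergence-free condition is a closed subspace condition invariant under the Helmholtz projection, the spatial norms reduce to the standard scalar identifications $[\rL^q(\cF_0),\rW^{2,q}(\cF_0)]_{1-\beta} = \rH^{2(1-\beta),q}(\cF_0)$ and $(\rL^q(\cF_0),\rW^{2,q}(\cF_0))_{1-\nicefrac{1}{p},p} = \rB_{qp}^{2-\nicefrac{2}{p}}(\cF_0)$, where the latter identification is classical for smooth bounded domains. This proves the two displayed embedding chains.

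The individual examples then follow by Sobolev embedding on $\cF_0 \subset \R^3$. For part (a), I would choose $\beta \in (\nicefrac{1}{p},1)$ small enough that $\rH^{\beta,p}(0,T) \hookrightarrow \rL^\infty(0,T)$ and $\rH^{2(1-\beta),q}(\cF_0) \hookrightarrow \rW^{1,2q}(\cF_0)$; the latter requires $2(1-\beta) - \nicefrac{3}{q} \ge 1 - \nicefrac{3}{2q}$, i.e.\ $\beta \le \nicefrac{1}{2} - \nicefrac{3}{(4q)}$, which is compatible with $\beta > \nicefrac{1}{p}$ because \eqref{eq:cond p and q} implies $\nicefrac{1}{p} < \nicefrac{1}{2} - \nicefrac{3}{(4q)}$. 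For part (b), the embedding $\rB_{qp}^{2-\nicefrac{2}{p}}(\cF_0) \hookrightarrow \rC^1(\overline{\cF_0})$ follows directly from \eqref{eq:cond p and q} via \cite[Theorem~4.6.1]{Tri:78} (as already noted in the paper right after \eqref{eq:cond p and q}), and this yields the remaining chain.

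For the final assertion concerning time-independence of the embedding constants under zero initial values, the idea is that only the constants from the trace embedding into $\BUC([0,T];\rY_\gamma)$ and from the complex interpolation into $\rH^{\beta,p}(0,T;\rY_{1-\beta})$ carry genuine dependence on $T$ (through the absence of a control on the value at $t=0$). The standard way to remove this dependence for elements of $\prescript{}{0}{\E_T}$ is to extend by zero to $(0,\infty)$: zero initial values ensure that this extension belongs to $\rW^{1,p}(0,\infty;\rY_0) \cap \rL^p(0,\infty;\rY_1)$ with the same norm, and then the $T$-independent constant on the half-line gives the desired uniform constant. The main obstacle, though essentially technical, is checking that the interpolation identifications above for closed subspaces of $\rL^q_\sigma$ type and the divergence-free $\rW^{2,q}$ subspace really reduce to the scalar identifications when passing to norms; this is settled by noting that the Helmholtz projection commutes with both interpolation functors and that the constraint $v|_{\partial \cO} = 0$ and $\partial_\nu d|_{\partial \cO} = 0$ defining $\rX_1$ only intervenes at the level of the boundary trace, which is absorbed into the Besov characterization.
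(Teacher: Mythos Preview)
Your proposal is correct and follows essentially the same route as the paper: the mixed derivative theorem for the chain in~(a), the trace embedding into $\BUC([0,T];\rY_\gamma)$ for~(b), Sobolev embeddings for the particular $\E_T^d$-embeddings, and \cite[Theorem~4.6.1]{Tri:78} for $\rB_{qp}^{2-\nicefrac{2}{p}}(\cF_0) \hookrightarrow \rC^1(\overline{\cF_0})$. Your argument is in fact more detailed than the paper's (which simply cites \cite[Corollary~4.5.10]{PS:16} and \cite[Theorem~III.4.10.2]{Ama:95} and does not spell out the time-independence claim at all); your numerical constraint $\beta \le \nicefrac{1}{2} - \nicefrac{3}{4q}$ is the sharp one, whereas the paper uses the more restrictive but still sufficient choice $\beta < \nicefrac{1}{2} - \nicefrac{3}{2q}$ coming directly from \eqref{eq:cond p and q}.
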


\begin{proof}
The embeddings in (a) follow from the mixed derivative theorem, see e.g.\ \cite[Corollary~4.5.10]{PS:16}, while the second embedding in (b) is a result of \cite[Theorem~III.4.10.2]{Ama:95}.
Concerning the embeddings of $\E_T^d$ in (a), there is $\beta \in (0,1)$ with $\nicefrac{1}{p} < \beta < \nicefrac{1}{2} - \nicefrac{3}{2q}$ in view of \eqref{eq:cond p and q}, yielding the assertion by Sobolev embeddings.
In (b), we use \eqref{eq:cond p and q} and \cite[Theorem~4.6.1]{Tri:78} to deduce $\rB_{qp}^{2-\nicefrac{2}{p}}(\cF_0) \hookrightarrow \rC^1(\overline{\cF_0})$.
\end{proof}

We start with the estimates of the nonlinear term $F_2$ as defined in \eqref{eq:nonlinear terms F_i}.

\begin{prop}\label{prop:nonlinear ests F1}
Let $p,q \in (1,\infty)$ satisfy \eqref{eq:cond p and q}, and for the reference solution $(z^*,p^*)$ from \autoref{prop:reference sol} and $(\tz,\tp)$, $(\tz_i,\tp_i) \in \cK_T^R$, let $(z,p) = (\tz + z^*,\tp + p^*)$ and $(z_i,p_i) = (\tz_i + z^*,\tp_i + p^*)$, $i=1,2$.
Recall $\Cmr > 0$ from \autoref{prop:max reg hom IVs}.
Then there are $C_2(R,T) > 0$ with  $C_2(R,T) < \nicefrac{R}{8 \Cmr}$ for $T>0$ sufficiently small and $L_2(R,T) > 0$ with $L_2(R,T) \to 0$ as $R \to 0$ and $T \to 0$ such that
\begin{equation*}
    \| F_2(\tz) \|_{\F_T^d} \le C_2(R,T), \tand \| F_2(\tz_1) - F_2(\tz_2) \|_{\F_T^d} \le L_2(R,T) \| \tz_1 - \tz_2 \|_{\E_T}.
\end{equation*}
\end{prop}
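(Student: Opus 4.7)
The plan is to decompose $F_2(\hz)$ into the three structurally distinct pieces
\begin{equation*}
    F_2(\hz) = \underbrace{(\cL_2-\Delta)d}_{=: \rI} \;-\; \underbrace{\bigl(\nabla d \cdot \partial_t Y + (v \cdot \nabla) d\bigr)}_{=: \rII} \;+\; \underbrace{|\nabla d|^2 (d + d_*)}_{=: \rIII},
\end{equation*}
where we write $v = \tv + v^{*}$ and $d = \hd + d^{*}$, and then estimate each piece separately in $\F_T^d = \rL^p(0,T;\rL^q(\cF_0)^3)$. For every piece I aim to factor out either a power of $T$ coming from H\"older in time, or a coefficient of the form $T(R+C_T^{*})$ coming from \autoref{lem:further aux ests} and the small second-order-derivative estimate in \autoref{lem:props of the transform}(a), leaving behind a quantity polynomial in $R+C_T^{*}+1$. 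This way one obtains $C_2(R,T)$ vanishing as $T\to 0$ (for $R,R_0$ fixed), which, combined with $C_T^{*}\to 0$ from \autoref{rem:norm of reference sol}, yields $C_2(R,T) < R/(8\Cmr)$ for $T$ small. The Lipschitz bound $L_2(R,T)$ is obtained by the same scheme applied to the differences, rewriting bilinear and trilinear expressions telescopically (e.g.\ $|\nabla d_1|^2 d_1 - |\nabla d_2|^2 d_2 = (\nabla d_1+\nabla d_2)\cdot\nabla(d_1-d_2) \, d_1 + |\nabla d_2|^2(d_1-d_2)$) and invoking the Lipschitz parts of the transform estimates.

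For piece $\rI$, write $(\cL_2 - \Delta) d = \sum_{j,k}(g^{jk}-\delta_{jk})\partial_j \partial_k d + \sum_{k}(\Delta Y_k)\,\partial_k d$. The first summand is handled by $\|g^{jk}-\delta_{jk}\|_{\infty,\infty} \le CT(R+C_T^{*})$ from \autoref{lem:further aux ests} and $\|d\|_{\rL^p(0,T;\rW^{2,q})}\le R+C_T^{*}$; the second summand uses the bound $\|\partial_k\partial_i Y_m\|_{\infty,\infty}\le CT(R+C_T^{*})$ from \autoref{lem:props of the transform}(a). For piece $\rII$, the term $\nabla d \cdot \partial_t Y$ is controlled using that $\partial_t Y = b^{(Y)}\circ Y$ vanishes identically when $(\ell,\omega)=0$, so an inspection of \eqref{eq:rhs b}--\eqref{eq:rhs bY} yields $\|\partial_t Y\|_{\infty,\infty}\le C(\|\ell\|_\infty + \|\omega\|_\infty)\le C(R+C_T^{*})$; the convective term $(v\cdot\nabla)d$ is estimated via H\"older in time, using $v\in \rL^\infty(0,T;\rL^\infty(\cF_0))$ from \autoref{lem:embedding of max reg space}(b) and $\|\nabla d\|_{\rL^p(0,T;\rL^q)}\le T^{1/p}\|\nabla d\|_{\infty,q}$. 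For piece $\rIII$, H\"older gives $\||\nabla d|^2 (d+d_{*})\|_{p,q} \le \|d+d_{*}\|_{\infty,\infty}\|\nabla d\|_{2p,2q}^{2}$, and I bound $\|\nabla d\|_{2p,2q}\le T^{1/(2p)}\|\nabla d\|_{\infty,2q}$, with $\|\nabla d\|_{\infty,2q}\le C(R+C_T^{*})$ by the embedding $\E_T^d\hookrightarrow \rL^\infty(0,T;\rW^{1,2q})$ from \autoref{lem:embedding of max reg space}(a), while $\|d+d_{*}\|_{\infty,\infty}\le C(1+R+C_T^{*})$.

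The main obstacle is the cubic term $\rIII$: unlike pieces $\rI$ and $\rII$, it does not carry a coefficient $T(R+C_T^{*})$ from the transform. The key trick is that because $\hd$ has zero initial datum, the embeddings $\prescript{}{0}{\E_T^d}\hookrightarrow \rL^\infty(0,T;\rB_{qp}^{2-\nicefrac{2}{p}})$ and $\prescript{}{0}{\E_T^d}\hookrightarrow\rL^\infty(0,T;\rW^{1,2q})$ hold with time-independent constant (see \autoref{lem:embedding of max reg space}); this allows me to exchange two of the three factors of $\nabla d$ for the time-small factor $T^{1/(2p)}$ via H\"older, so that $\rIII$ contributes $C T^{1/p}(1+R+C_T^{*})(R+C_T^{*})^2$. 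Putting all three contributions together produces a bound of the schematic form
\begin{equation*}
    \|F_2(\tz)\|_{\F_T^d} \;\le\; C\bigl[\,T(R+C_T^{*})+T^{1/p}\bigr]\,P(R+C_T^{*}),
\end{equation*}
with $P$ a polynomial, which is strictly less than $R/(8\Cmr)$ once $T$ is small enough, since $C_T^{*}\to 0$. The same decomposition, now applied term-by-term to $F_2(\tz_1)-F_2(\tz_2)$ using the Lipschitz estimates of $g^{jk}$, $\partial^\beta Y$, $Q$ and $\partial_t Y$ in Lemmas \autoref{lem:props of the transform} and \autoref{lem:ests covariant contrvariant christoffel} together with the telescoping identities above, delivers the Lipschitz bound with $L_2(R,T)\to 0$ as $R,T\to 0$.
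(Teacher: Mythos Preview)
Your decomposition into $\rI$, $\rII$, $\rIII$ and the overall scheme are exactly what the paper does, and your treatment of $\rI$, of $(v\cdot\nabla)d$, and of $\rIII$ is correct and essentially the same (the paper takes $\|d\|_{\infty,\infty}\|\nabla d\|_{\infty,\infty}\|\nabla d\|_{p,q}$ rather than your $\|d+d_*\|_{\infty,\infty}\|\nabla d\|_{2p,2q}^2$, but both produce the needed $T^{1/p}$ factor).

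There is, however, one genuine slip in your handling of $\nabla d\cdot\partial_t Y$. Your estimate $\|\partial_t Y\|_{\infty,\infty}\le C(\|\ell\|_\infty+\|\omega\|_\infty)\le C(R+C_T^{*})$ is correct, but pairing it with $\|\nabla d\|_{p,q}\le R+C_T^{*}$ gives only $C(R+C_T^{*})^2$, which does \emph{not} carry a factor $T$ or $T^{1/p}$ and therefore does not fit your own schematic bound $C[T(R+C_T^{*})+T^{1/p}]P(R+C_T^{*})$; in particular it does not tend to $0$ as $T\to 0$ at fixed $R$, so the conclusion $C_2(R,T)<R/(8\Cmr)$ ``for $T$ sufficiently small'' would fail for this term. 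The fix is immediate and is exactly what the paper does: use only the \emph{boundedness} of $\partial_t Y$ from \autoref{lem:props of the transform}(a), then split $d=\td+d^*$ and apply to $\nabla\td$ the same H\"older-in-time trick you already use for the convective term, obtaining $\|\nabla d\cdot\partial_t Y\|_{\F_T^d}\le C(T^{1/p}R+C_T^{*})$. With this correction your argument goes through.

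A minor bookkeeping point: when you bound quantities like $\|\nabla d\|_{\infty,2q}$, $\|v\|_{\infty,\infty}$ or $\|d+d_*\|_{\infty,\infty}$, the $d^{*}$- and $v^{*}$-parts bring in the initial-data norm $C_0=\|z_0\|_{\rX_\gamma}$ via \autoref{lem:est of BUC trace by MR space and IVs}, so your polynomial $P$ should depend on $R+C_T^{*}+C_0$ rather than $R+C_T^{*}$ alone. This does not affect the conclusion.
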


\begin{proof}
Throughout this proof, we heavily rely on $(\tz,\tp) \in \cK_T^R$, implying that the embedding constants are time-independent in view of \autoref{lem:embedding of max reg space}.
Hence, we also often split $v$ and $d$ into their respective parts with homogeneous initial values $\tv$ and $\td$ as well as the reference solution parts $v^*$ and $d^*$.
Let us also invoke some useful estimate at this stage.
For $\td \in \prescript{}{0}{\E_T^d}$, we deduce from H\"older's inequality as well as \autoref{lem:embedding of max reg space}(a) the existence of a $T$-independent constant $C > 0$ such that
\begin{equation}\label{eq:est of td in Lp W1q}
    \| \td \|_{\rL^p(0,T;\rW^{1,q}(\cF_0))} \le T^{\nicefrac{1}{p}} \| \td \|_{\rL^\infty(0,T;\rW^{1,q}(\cF_0))} \le C T^{\nicefrac{1}{p}} \| \td \|_{\E_T^d}.
\end{equation}

First, we employ \autoref{lem:props of the transform}(a) to estimate $\| \Delta Y_k \|_{\infty,\infty}$, \autoref{lem:further aux ests} to handle $\| g^{jk} - \delta_{jk} \|_{\infty,\infty}$ as well as \eqref{eq:est sol} in order to bound $\| d \|_{\E_T^d}$, so we get
\begin{equation}\label{eq:est of cL_2}
    \begin{aligned}
        \| (\cL_2 - \Delta)d \|_{\F_T^d}
        &\le C \sup_{i,j,k}(\| g^{jk} - \delta_{jk} \|_{\infty,\infty} \| \partial_k \partial_j d_i \|_{p,q} + \| \Delta Y_k \|_{\infty,\infty} \| \partial_k d_i \|_{p,q})\\
        &\le C T (R + C_T^*) \| d \|_{\E_T^d}\\
        &\le C T (R + C_T^*)^2.
    \end{aligned}
\end{equation}

\autoref{lem:embedding of max reg space}(a) allows us to estimate $\| \tv \|_{\infty,\infty}$ by $\| \tz \|_{\E_T}$.
Thanks to \autoref{lem:est of BUC trace by MR space and IVs}, we obtain an estimate of $\| z^* \|_{\BUC([0,T];\rX_\gamma)}$ by the initial values and the reference solution.
In conjunction with \eqref{eq:est of td in Lp W1q}, we obtain
\begin{equation*}
    \begin{aligned}
        \| (v \cdot \nabla) d \|_{\F_T^d}
        &\le C(\| \tv \|_{\infty,\infty} + \| v^* \|_{\infty,\infty}) (\| \td \|_{\rL^p(0,T;\rW^{1,q}(\cF_0))} + \| d^* \|_{\rL^p(0,T;\rW^{1,q}(\cF_0))})\\
        &\le C(\| \tz \|_{\E_T} + \| z^* \|_{\BUC([0,T];\rX_\gamma)}) (T^{\nicefrac{1}{p}} \| \td \|_{\E_T^d} +  \| d^* \|_{\E_T^d})\\
        &\le C(R + C_0 + C_T^*) (T^{\nicefrac{1}{p}} R + C_T^*).
    \end{aligned}
\end{equation*}

By virtue of \autoref{lem:props of the transform}(a), which yields the boundedness of $\partial_t Y$, and \eqref{eq:est of td in Lp W1q}, it follows that
\begin{equation*}
    \| \nabla d \cdot \partial_t Y \|_{\F_T^d} \le C \| d \|_{\rL^p(0,T;\rW^{1,q}(\cF_0))} \le C (T^{\nicefrac{1}{p}} \| \td \|_{\E_T^{d}} + \| d^* \|_{\E_T^{d}}) \le C (T^{\nicefrac{1}{p}} R + C_T^*).
\end{equation*}

Exploiting \autoref{lem:embedding of max reg space}(b) for estimating $\| \td \|_{\rL^\infty(0,T;\rW^{1,\infty}(\cF_0))}$ by $\| \td \|_{\E_T^d}$ as well as $\| d^* \|_{\rL^\infty(0,T;\rW^{1,\infty}(\cF_0))}$ by $\| d^* \|_{\BUC([0,T];\rX_\gamma^{d})}$, and using \autoref{lem:est of BUC trace by MR space and IVs} with regard to $\| d^* \|_{\BUC([0,T];\rX_\gamma^{d})}$, we infer that
\begin{equation}\label{eq:est of the trilin d term}
    \begin{aligned}
        \| |\nabla d|^2 d \|_{\F_T^d}
        &\le C \| d \|_{\infty,\infty} \| \nabla d \|_{\infty,\infty} \| \nabla d \|_{p,q}\\
        &\le C(\| \td \|_{\E_T^{d}}^2 + \| d^* \|_{\BUC([0,T];\rX_\gamma^{d})}^2)(T^{\nicefrac{1}{p}} \| \td \|_{\rL^\infty(0,T;\rW^{1,q}(\cF_0))} + \| d^* \|_{\rL^p(0,T;\rW^{1,q}(\cF_0))})\\
        &\le C(R^2 + (C_T^*)^2 + C_0^2)(T^{\nicefrac{1}{p}}R + C_T^*),
    \end{aligned}
\end{equation}
and $\| |\nabla d|^2 d_* \|_{\F_T^d}$ can be estimated similarly, so the first part of the assertion follows for some $C_2(R,T) > 0$.

As in \eqref{eq:est of cL_2}, also making use of \autoref{lem:ests covariant contrvariant christoffel} and \autoref{lem:props of the transform}(a) to control the differences $(g_1)^{jk} - (g_2)^{jk}$ and $\Delta Y_{1,k} - \Delta Y_{2,k}$ resulting from the shape of $\cL_2$ as introduced in \autoref{sec:change of var}, we find the estimates
\begin{equation*}
    \| (\cL_2^{(1)} - \Delta)(\td_1 - \td_2) \|_{\F_T^d} \le C(T(R+C_T^*)) \| \td_1 - \td_2 \|_{\E_T^{d}}, \tand \| (\cL_2^{(1)} - \cL_2^{(2)})(\td_2 + d^*) \|_{\F_T^d} \le C T \| \tz_1 - \tz_2 \|_{\E_T}.
\end{equation*}
In total, we thus get
\begin{equation*}
    \| (\cL_2^{(1)} - \Delta)(\td_1 + d^*) - (\cL_2^{(2)} - \Delta)(\td_2 + d^*) \|_{\F_T^d} \le C(T(1 + R + C_T^*)) \| \tz_1 - \tz_2 \|_{\E_T}.
\end{equation*}
Similar arguments as in the estimate \eqref{eq:est of the trilin d term} and the addition and subtraction of suitable terms lead to
\begin{equation*}
    \| (\tv_1 + v^*) \cdot \nabla (\td_1 + d^*) - (\tv_2 + v^*) \cdot \nabla (\td_2 + d^*) \|_{\F_T^d} \le (T^{\nicefrac{1}{p}}(R+C_0) + C_T^*) \| \tz_1 - \tz_2 \|_{\E_T}.
\end{equation*}
Next, additionally exploiting \autoref{lem:props of the transform}(a) to estimate $\partial_t Y_1 - \partial_t Y_2$, and also relying on \eqref{eq:est of td in Lp W1q}, we conclude
\begin{equation*}
    \begin{aligned}
        &\quad \| \nabla (\td_1 + d^*) \partial_t Y_1 - \nabla (\td_2 + d^*) \partial_t Y_2 \|_{\F_T^d}\\
        &\le \| \nabla (\td_1 + d^*) (\partial_t Y_1 - \partial_t Y_2) \|_{p,q} + \| \nabla (\td_1 - \td_2) \partial_t Y_2 \|_{p,q}\\
        &\le C(T^{\nicefrac{1}{p}}R + C_T^*) T \| \tz_1 - \tz_2 \|_{\E_T} + C T^{\nicefrac{1}{p}} \| \td_1 - \td_2 \|_{\E_T^{d}}.
    \end{aligned}
\end{equation*}
Proceeding analogously as in \eqref{eq:est of the trilin d term} upon adding and subtracting terms, we get
\begin{equation*}
    \| |\nabla (\td_1 + d^*)|^2 (\td_1 + d^*) - |\nabla (\td_2 + d^*)|^2 (\td_2 + d^*) \|_{\F_T^d} \le C(T^{\nicefrac{1}{p}}(R^2 + (C_T^*)^2 + C_0^2) +C_0 C_T^*) \| \td_1 - \td_2 \|_{\E_T^{d}}.
\end{equation*}
Again, let us observe that the term involving $d_*$ can be dealt with likewise.
\end{proof}

For the treatment of $F_1$, it is crucial to provide estimates of the term $\cQ$.

\begin{lem}\label{lem:nonlinear ests Q}
Let $p,q \in (1,\infty)$ satisfy \eqref{eq:cond p and q}, and for the reference solution $(z^*,p^*)$ from \autoref{prop:reference sol} and $(\tz,\tp)$, $(\tz_i,\tp_i) \in \cK_T^R$, let $(z,p) = (\tz + z^*,\tp + p^*)$ and $(z_i,p_i) = (\tz_i + z^*,\tp_i + p^*)$, $i=1,2$.
Recall $\Cmr > 0$ from \autoref{prop:max reg hom IVs}.
Then there are $C_{\cQ}(R,T) > 0$ with  $C_{\cQ}(R,T) < \nicefrac{R}{16 \Cmr}$ for $T>0$ sufficiently small and $L_{\cQ}(R,T) > 0$ with $L_{\cQ}(R,T) \to 0$ as $R \to 0$ and $T \to 0$ such that
\begin{equation*}
    \| \cQ(\td,d^*) \|_{\F_T^v} \le C_{\cQ}(R,T), \tand \| \cQ^{(1)}(\td_1,d^*) - \cQ^{(2)}(\td_2,d^*) \|_{\F_T^v} \le L_{\cQ}(R,T) \| \tz_1 - \tz_2 \|_{\E_T}.
\end{equation*}
\end{lem}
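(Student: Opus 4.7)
The plan is to exploit two structural features of $\cQ(d^*, \td) = B(d^*)\td - \cB(d^* + \td)(d^* + \td)$: the bilinearity of both $B$ and $\cB$ in their two arguments, and the fact that $\cB$ reduces to $B$ when the change of variables is trivial, so that the difference $\cB - B$ is controlled by transform remainders of the form $g^{jk} - \delta_{jk}$, $\partial_j Y_k - \delta_{jk}$, $\partial_k \partial_i Y_m$ and $\Delta Y_k$, each of size at most $CT(R + C_T^*)$ by \autoref{lem:props of the transform}(a) and \autoref{lem:further aux ests}. Expanding $\cB(d^* + \td)(d^* + \td)$ by bilinearity and grouping the cancellation between the explicit $B(d^*)\td$ and the $B$-part of $\cB(d^*)(\td)$, I would start from the decomposition
\begin{equation*}
    \cQ(d^*, \td) = -\cB(d^*)(d^*) - (\cB - B)(d^*)(\td) - \cB(\td)(d^*) - \cB(\td)(\td),
\end{equation*}
and further split $\cB = B + (\cB - B)$ in each of the last two summands to isolate the ``flat'' contributions from the transform remainders.

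I would then estimate each piece in $\F_T^v = \rL^p(0,T;\rL^q(\cF_0)^3)$ using the pointwise inequality $|\cB(d)(h)|, |B(d)(h)| \lesssim (\text{transform factors})\cdot |\nabla d| \cdot |\nabla^2 h|$ and splitting the product according to $\rL^\infty(\rL^\infty)$ for the lower-order factor and $\rL^p(\rL^q)$ for the higher-order factor. The embedding $\rX_\gamma^d \hookrightarrow \rC^1(\overline{\cF_0})$ guaranteed by \eqref{eq:cond p and q} (cf.\ \autoref{lem:embedding of max reg space}(b)), combined with \autoref{lem:est of BUC trace by MR space and IVs} and the bounds on the transform factors from \autoref{lem:props of the transform}, then yields the schematic estimates
\begin{equation*}
    \| \cB(d^*)(d^*) \|_{\F_T^v} \le C(C_0 + C_T^*) C_T^*, \quad \| (\cB - B)(d^*)(\td) \|_{\F_T^v} \le C T(R + C_T^*)(C_0 + C_T^*) R,
\end{equation*}
\begin{equation*}
    \| B(\td)(d^*) \|_{\F_T^v} \le C R C_T^*, \quad \| B(\td)(\td) \|_{\F_T^v} \le C R^2,
\end{equation*}
together with the corresponding $(\cB - B)$-pieces from the last two summands, which each carry an additional factor $T(R + C_T^*)$. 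Defining $C_\cQ(R,T)$ as the sum, the only contribution without intrinsic $T$-smallness is $CR^2$; a first restriction $R \le R_1$ makes $CR^2 < R/(32\Cmr)$, and \autoref{rem:norm of reference sol} then lets me choose $T$ so small that $C_T^*$ forces the remaining terms to be jointly below $R/(32\Cmr)$, yielding $C_\cQ(R,T) < R/(16\Cmr)$.

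For the Lipschitz part I would apply the same algebraic decomposition to $\cQ^{(1)}(\td_1, d^*) - \cQ^{(2)}(\td_2, d^*)$ and expand each difference by telescoping, $A_1 B_1 - A_2 B_2 = (A_1 - A_2)B_1 + A_2(B_1 - B_2)$, tracking separately (i) differences in the transform-dependent factors, controlled by $CT\|(\tell_1 - \tell_2, \tomega_1 - \tomega_2)\|_\infty \le CT \|\tz_1 - \tz_2\|_{\E_T}$ via \autoref{lem:ests covariant contrvariant christoffel} and \autoref{lem:props of the transform}(a)--(b), and (ii) differences in $\td$, absorbed via the same $\rL^\infty(\rL^\infty)$--$\rL^p(\rL^q)$ splitting as before. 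The resulting $L_\cQ(R,T)$ is a finite sum of terms of the form $T^\alpha(R + C_T^*)^\beta$ or $(C_0 + C_T^*)^\gamma C_T^*$ with $\alpha, \beta, \gamma > 0$, and hence vanishes as $R, T \to 0$.

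The main obstacle I anticipate is the norm book-keeping: the two $d^*$-heavy terms $\cB(d^*)(d^*)$ and $\cB(\td)(d^*)$ (and their differences for the Lipschitz part) can only be made small through the $T$-dependence of the reference solution, so one \emph{must} place the $d^*$-factor carrying the highest-order derivative into $\rL^p(0,T;\rW^{2,q})$, whose norm equals $C_T^*$ and vanishes as $T\to 0$, rather than into the uniform-in-time norm $\BUC([0,T];\rX_\gamma^d)$, which is only bounded by $C_0 + C_T^*$ and does \emph{not} shrink with $T$. A careless assignment of factors would leave an irreducible $C_0$-contribution and wreck the fixed-point argument.
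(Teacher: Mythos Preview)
Your approach is correct and runs parallel to the paper's, though with a different grouping. The paper decomposes $\cQ$ as $\cQ_\rI + \cQ_{\rII} + \cQ_{\rIII}$ with $\cQ_\rI = -B(\td)d$, $\cQ_{\rII} = -B(d^*)d^*$, and $\cQ_{\rIII} = (B - \cB)(d)(d)$, and then carries out an explicit index-level expansion of $\cQ_{\rIII}$ (nine terms labeled $\rI$--$\rIX$, with the ``flat'' parts $\rII$ and $\rIX$ cancelling) to isolate the transform remainders by hand. Your bilinear expansion is algebraically the same once you also split $\cB(d^*)d^* = B(d^*)d^* + (\cB-B)(d^*)d^*$: the four $(\cB-B)$ pieces then sum exactly to the paper's $\cQ_{\rIII}$, and your $-B(d^*)d^*$, $-B(\td)d^*$, $-B(\td)\td$ are the paper's $\cQ_{\rII}$ and the two halves of $\cQ_\rI$. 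What you gain is conceptual economy---the smallness of $\cB - B$ follows directly from the observation that $\cB$ collapses to $B$ at $Y = \Id$, so every term of the difference carries at least one factor from \autoref{lem:further aux ests} or \autoref{lem:props of the transform}(a)---whereas the paper verifies this cancellation term by term. Your diagnosis of the ``norm book-keeping'' obstacle is exactly right and matches the paper's placement of factors.

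Two minor points. First, your pointwise bound $|\cB(d)h| \lesssim |\nabla d|\,|\nabla^2 h|$ omits the lower-order piece $(\partial_m h_l)(\partial_k \partial_i Y_m)$ from the definition of $\cB$; this is harmless since it already carries the small factor $\partial_k\partial_i Y_m$, but it should appear in the bookkeeping. Second, your description of $L_\cQ(R,T)$ as a sum of terms $T^\alpha(R+C_T^*)^\beta$ with $\alpha > 0$ is not quite accurate: telescoping $B(\td_1)\td_1 - B(\td_2)\td_2$ produces a bare $CR$ contribution (no power of $T$), so the correct statement is that every term carries at least one factor of $T$, $R$, or $C_T^*$. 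This does not affect the conclusion, since $R \to 0$ suffices for that term.
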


\begin{proof}
First, we expand $\cQ(\td,d^*) = B(d^*) \td - \cB(d)d = B(d^*) \td - \cB(\td + d^*)(\td + d^*)$ as follows:
\begin{equation}\label{eq:shapes Q_I Q_II Q_III}
    \begin{aligned}
        \cQ(\td,d^*)
        &= B(d^*) \td - \cB(\td + d^*)(\td + d^*)\\
        &= (B(d^*) - B(\td + d^*))(\td + d^*) - B(d^*) d^* + (B(\td + d^*) - \cB(\td + d^*))(\td + d^*)\\
        &=: \cQ_\rI + \cQ_{\rII} + \cQ_{\rIII}.
    \end{aligned}
\end{equation}
Regarding $\cQ_\rI$, using the Einstein sum convention, we obtain
\begin{equation*}
    \cQ_\rI = (B(d^*) - B(\td + d^*))(\td + d^*) = - \partial_i \td_l\Delta(\td + d^*)_l - \partial_k \td_l \partial_k \partial_i (\td + d^*)_l.
\end{equation*}
An application of \autoref{lem:embedding of max reg space}(b) providing an estimate of $\| \partial_i \td_l \|_{\infty,\infty}$ by $\| \td \|_{\E_T^d}$ and the observation that both terms can be handled similarly lead to
\begin{equation}\label{eq:est of Q_I}
    \| \partial_i \td_l\Delta(\td + d^*)_l \|_{p,q} \le C \| \td \|_{\E_T^{d}} \| d \|_{\E_T^{d}} \le C R \| d \|_{\E_T^{d}}, \tand \| \cQ_\rI \|_{p,q} \le C R \| d \|_{\E_T^{d}}.
\end{equation}

\autoref{lem:embedding of max reg space}(b) enables us to estimate $\| \partial_i d_l^* \|_{\infty,\infty}$ by $\| d^* \|_{\BUC([0,T];\rX_\gamma^{d})}$, and in view of \autoref{lem:est of BUC trace by MR space and IVs}, we can estimate the resulting norm by norm of the initial values as well as the reference solution, so
\begin{equation}\label{eq:est of Q_II}
    \| \cQ_{\rII} \|_{p,q} \le C \| \partial_i d_l^* \|_{\infty,\infty} \| d^* \|_{\E_T^{d}} \le C C_T^* \| d^* \|_{\BUC([0,T];\rX_\gamma^{d})} \le C C_T^*(C_T^* + C_0).
\end{equation}

Concerning the estimate of $\cQ_{\rIII}$, we again often split $z$ into $\tz$ with homogeneous initial values and the reference solution $z^*$ as seen in the proof of \autoref{prop:nonlinear ests F1}.
Besides,  we further expand $\cQ_{\rIII}$ as follows:
\begin{equation}\label{eq:def I II III IV}
    \begin{aligned}
        \cQ_{\rIII}
        &= (B(\td + d^*) - \cB(\td + d^*))(\td + d^*)\\
        &= \bigl[\partial_i(\td + d^*)_l \Delta(\td + d^*)_l\bigr] + \bigl[\partial_k (\td + d^*)_l \partial_k \partial_i (\td + d^*)_l\bigr] - \bigl[(\cL_2(\td + d^*)_l \partial_m (\td + d^*)_l \partial_i Y_m)\bigr]\\
        &\quad - \bigl[(\partial_m (\td + d^*)_l \partial_k Y_m)(\partial_j \partial_m (\td + d^*)_l (\partial_k Y_j \partial_i Y_m) + 2 (\td + d^*)_l \partial_k \partial_i Y_m)\bigr]\\
        &=: \rI + \rII + \rIII + \rIV.
    \end{aligned}
\end{equation}
Recalling $d = \td + d^*$, we calculate
\begin{equation*}
    \rI + \rIII = \partial_i d_l \Delta d_l - \cL_2 d_l (\partial_m d_l \partial_i Y_m) = \partial_i d_l (\Delta - \cL_2) d_l - \cL_2 d_l \partial_m d_l (\partial_i Y_m - \delta_{im}).
\end{equation*}
The next step now is to estimate $\| \cL_2 d_l \|_{p,q}$.
In fact, making use of \autoref{lem:ests covariant contrvariant christoffel} to treat $\| g^{ij} \|_{\infty,\infty}$, \autoref{lem:props of the transform} to bound $\| \Delta Y_k \|_{\infty,\infty}$ as well as \eqref{eq:est sol} to estimate the resulting norm $\| d \|_{\E_T^d}$, we find that
\begin{equation}\label{eq:est of L2}
    \| \cL_2 d_l \|_{p,q} \le C\left(\| g^{ij} \|_{\infty,\infty} \| \partial_k \partial_j d \|_{p,q} + \| \Delta Y_k \|_{\infty,\infty} \| \partial_k d_i \|_{p,q}\right) \le C \| d \|_{\E_T^d} \le C (R + C_T^*).
\end{equation}
Furthermore, \autoref{lem:embedding of max reg space}(b) in the first step and \autoref{lem:est of BUC trace by MR space and IVs} in the second estimate result in
\begin{equation}\label{eq:est of del, dl in infty infty}
    \| \partial_m d_l \|_{\infty,\infty} \le C\left(\| d^* \|_{\BUC([0,T];\rX_\gamma^{d})} + \| \td \|_{\E_T^d}\right) \le C (C_T^* + C_0 + R).
\end{equation}
Invoking \autoref{lem:further aux ests} to estimate $\| \partial_i Y_m - \delta_{im} \|_{\infty,\infty}$, we derive from the two preceding estimates that
\begin{equation}\label{eq:est of L2 term}
    \| \cL_2 d_l \|_{p,q} \| \partial_m d_l \|_{\infty,\infty} \| \partial_i Y_m - \delta_{im} \|_{\infty,\infty} 
    \le C(C_T^* + C_0 + R) T(R + C_T^*)^2.
\end{equation}
Recalling \eqref{eq:est of del, dl in infty infty} and \eqref{eq:est of cL_2} for the first term, and plugging in the estimate from \eqref{eq:est of L2 term}, we infer that
\begin{equation}\label{eq:est of I+III}
    \begin{aligned}
        \| \rI + \rIII \|_{p,q}
        &\le C(\| \partial_i d_l \|_{\infty,\infty} \| (\Delta - \cL_2) d_l \|_{p,q} + \| \cL_2 d_l \|_{p,q} \| \partial_m d_l \|_{\infty,\infty} \| \partial_i Y_m - \delta_{im} \|_{\infty,\infty})\\
        &\le C(C_T^* + C_0 + R) T (R + C_T^*)^2.
    \end{aligned}
\end{equation}

We further split $\rIV$ into $\rIV = (\partial_m d_l \partial_k Y_m)(\partial_i \partial_m d_l (\partial_k Y_j \partial_i Y_m) + \partial_m d_l \partial_k \partial_i Y_m) =: \rV + \rVI$ and expand
\begin{equation*}
    \begin{aligned}
        \rV
        &= (\partial_m d_l \partial_k Y_m)(\partial_j \partial_m d_l)(\partial_k Y_j \partial_i Y_m)\\
        &= (\partial_m d_l \partial_k Y_m)(\partial_j \partial_m d_l \delta_{im} \delta_{jk} + \partial_j \partial_m d_l(\partial_j Y_k - \delta_{jk}) \partial_i Y_m + \partial_j \partial_m d_l(\partial_i Y_m - \delta_{im}) \delta_{jk})\\
        &=: \rV(1) + \rVII.
    \end{aligned}
\end{equation*}
Besides, we expand
\begin{equation*}
    \rV(1) = (\partial_m d_l (\partial_k Y_m - \delta_{km}) + \partial_m d_l \delta_{km}) \partial_j \partial_m d_l \delta_{im} \delta_{jk} \eqqcolon \rVIII + \rIX.
\end{equation*}
Observing that $\rIX = \partial_m d_l \delta_{km} \partial_j \partial_m d_l \delta_{im} \delta_{jk} = \partial_k d_l \partial_k \partial_i d_l$, and recalling the shape of $\rII$ from \eqref{eq:def I II III IV}, we find that $\rII$ and $\rIX$ cancel out.
As a result, it remains to treat the terms $\rVIII$, $\rVII$ and $\rVI$ to complete the estimates of $\cQ_{\rIII}$.
Similarly as in \eqref{eq:est of L2 term}, we first get
\begin{equation}\label{eq:est of V(1)_1}
    \| \rVIII \|_{p,q} \le C \| \partial_k d_l \|_{\infty,\infty} \| \partial_k Y_m - \delta_{km} \|_{\infty,\infty} \| \partial_j \partial_m d_l \|_{p,q} \le C(C_T^* + C_0 + R) T (R + C_T^*)^2.
\end{equation}
Likewise, also using \autoref{lem:props of the transform}(a) to bound $\| \partial_i Y_m \|_{\infty,\infty}$, we argue that
\begin{equation}\label{eq:est of V(2)}
    \begin{aligned}
        \| \rVII \|_{p,q}
        &\le C(\| \partial_j \partial_m d_l \|_{p,q} \| \partial_j Y_k - \delta_{jk} \|_{\infty,\infty} \| \partial_i Y_m \|_{\infty,\infty} + \| \partial_j \partial_m d_l \|_{p,q} \| \partial_j Y_k - \delta_{im} \|_{\infty,\infty})\\
        &\le C T(R + C_T^*)^2.
    \end{aligned}
\end{equation}
By virtue of \autoref{lem:props of the transform}(a), yielding the desired estimate of $\| \partial_k \partial_i Y_m \|_{\infty,\infty}$, we finally obtain
\begin{equation}\label{eq:est of VI}
    \| \rVI \|_{p,q} \le C \| \partial_k \partial_i Y_m \|_{\infty,\infty} \| \partial_m d_l \|_{p,q} \le C T(R + C_T^*)^2.
\end{equation}
In summary, recalling the shape of $\cQ_{\rIII}$ from \eqref{eq:def I II III IV} and concatenating \eqref{eq:est of I+III}, \eqref{eq:est of V(1)_1}, \eqref{eq:est of V(2)} as well as \eqref{eq:est of VI}, we find that $\cQ_{\rIII}$ satisfies an estimate of the desired form.
Together with \eqref{eq:est of Q_I} and \eqref{eq:est of Q_II}, this implies the existence of $C_{\cQ}(R,T) > 0$ such that the first part of the assertion follows.

For the Lipschitz estimate, we start by adding and subtracting terms in order to get a more advantageous representation.
More precisely, we write
\begin{equation*}
    \begin{aligned}
        \cQ^{(1)}(\td_1,d^*) - \cQ^{(2)}(\td_2,d^*) 
        &= [B(d^*) - B(\td_1 + d^*)](\td_1 - \td_2) + [B(\td_1 + d^*)\\
        &\quad - \cB^{(1)}(\td_1 + d^*)](\td_1 - \td_2) + [\cB^{(2)}(\td_2 + d^*) - \cB^{(1)}(\td_1 + d^*)](\td_2 + d^*)\\
        &=: \cR_{\rI} + \cR_{\rII} + \cR_{\rIII}.
    \end{aligned}
\end{equation*}
The terms $\cR_{\rI}$ and $\cR_{\rII}$ are similar to $\cQ_\rI$ and $\cQ_{\rIII}$ from \eqref{eq:shapes Q_I Q_II Q_III}, but the operator $B$ and its transformed version $\cB$ are applied to $\td_1 - \td_2$ which has zero initial values.
As in \eqref{eq:est of Q_I}, we obtain
\begin{equation*}
    \| \cR_{\rI} \|_{p,q} \le C R \| \td_1 - \td_2 \|_{\E_T^{d}}.
\end{equation*}
Concerning $\cR_{\rII}$, we use the same splitting in $\rI$, $\rII$, $\rIII$ and $\rIV$ as seen for $\cQ_{\rIII}$ in \eqref{eq:def I II III IV}, denoted by $\mathrm{i}$, $\mathrm{ii}$, $\mathrm{iii}$ and $\mathrm{iv}$ here for distinction.
Making use of \eqref{eq:est of cL_2} and \eqref{eq:est of del, dl in infty infty} for the first part, and recalling \eqref{eq:est of L2} as well as \eqref{eq:est of del, dl in infty infty}, and \autoref{lem:further aux ests} to estimate $\| \partial_i Y_m - \delta_{im} \|_{\infty,\infty}$, we derive that
\begin{equation*}
    \begin{aligned}
        &\quad \| \mathrm{i} + \mathrm{iii} \|_{p,q}\\
        &\le C(\| \partial_i (d_1)_l \|_{\infty,\infty} \| (\Delta - \cL_2^{(1)})(\td_1 - \td_2)_l \|_{p,q} + \| \cL_2^{(1)}(\td_1 - \td_2)_l \|_{p,q} \| \partial_m (d_1)_l \|_{\infty,\infty} \| (\partial_i (Y_1)_m - \delta_{im} \|_{\infty,\infty})\\
        &\le C(C_T^* + C_0 + R) T(R + C_T^*) \| \td_1 - \td_2 \|_{\E_T^{d}}.
    \end{aligned}
\end{equation*}
Also expanding $\mathrm{iv} = \mathrm{v} + \mathrm{vi}$, $\mathrm{v} = \mathrm{v}(1) + \mathrm{vii}$ as well as $\mathrm{v}(1) = \mathrm{viii} + \mathrm{ix}$ and using that $\mathrm{ii}$ and $\mathrm{ix}$ cancel out, it remains to estimate $\mathrm{viii}$, $\mathrm{vii}$ and $\mathrm{vi}$ given by
\begin{equation*}
    \begin{aligned}
        \mathrm{viii}
        &= \partial_m (d_1)_l (\partial_k (Y_1)_m - \delta_{km}) \partial_j (d_1)_m (\td_1 - \td_2)_l \delta_{im} \delta_{jk},\\
        \mathrm{vii}
        &= (\partial_m (d_1)_l \partial_k (Y_1)_m) (\partial_j \partial_m (\td_1 - \td_2)_l (\partial_j (Y_1)_k - \delta_{jk}) \partial_i (Y_1)_m\\
        &\quad + \partial_j \partial_m (\td_1 - \td_2)_l (\partial_i (Y_1)_m - \delta_{im}) \delta_{jk}), \enspace \text{and}\\
        \mathrm{vi}
        &= (\partial_m (d_1)_l \partial_k (Y_1)_m \partial_m (d_1)_l \partial_k \partial_i ((Y_1)_m - (Y_2)_m)).
    \end{aligned}
\end{equation*}
Arguing as in \eqref{eq:est of V(1)_1}, with $d$ replaced by $\td_1 - \td_2$, we get 
\begin{equation*}
    \| \mathrm{viii} \|_{p,q} \le C(C_T^* + C_0 + R)^2 T(R + C_T^*) \| \td_1 - \td_2 \|_{\E_T^d}.
\end{equation*}
Mimicking the procedure to obtain \eqref{eq:est of V(2)}, we infer that 
\begin{equation*}
    \| \mathrm{vii} \|_{p,q} \le C (C_T^* + C_0 + R + 1) T (R + C_T^*) \| \td_1 - \td_1 \|_{\E_T^d}.
\end{equation*}
Finally, using similar arguments as for \eqref{eq:est of VI}, and invoking \eqref{eq:est of del, dl in infty infty}, we conclude the estimate
\begin{equation*}
    \begin{aligned}
        \| \mathrm{vi} \|_{p,q}
        &\le C \| \partial_m d_1 \|_{\infty,\infty}^2 \| \partial_k \partial_i ((Y_1)_m - (Y_2)_m) \|_{p,q}\\
        &\le C(C_T^* + C_0 + R)^2 T \| (\tell_1 - \tell_2,\tomega_1 - \tomega_2) \|_{\infty}\\
        &\le C(C_T^* + C_0 + R)^2 T \| \tz_1 - \tz_2 \|_{\E_T}.
    \end{aligned}
\end{equation*}
A concatenation of the previous estimates yields an estimate of $\| \cR_{\rII} \|_{p,q}$ of the desired form.
With respect to $\cQ^{(1)}(\td_1,d^*) - \cQ^{(2)}(\td_2,d^*)$, it remains to estimate $\cR_{\rIII}$.
To this end, we first consider the difference in the first addend.
Recalling that $d_i = \td_i + d^*$ and using the Einstein sum convention, we get
\begin{equation*}
    \begin{aligned}
        &\quad \cL_2^{(2)} (d_2)_l \partial_m (d_2)_l \partial_i (Y_2)_m - \cL_2^{(1)} (d_2)_l \partial_m (d_1)_l \partial_i (Y_1)_m\\
        &= \cL_2^{(2)} (d_2)_l \partial_m (d_2)_l (\partial_i (Y_2)_m - \partial_i (Y_1)_m) + \cL_2^{(2)} (d_2)_l \partial_m (d_2 - d_1)_l \partial_i (Y_1)_m\\
        &\quad + (\cL_2^{(2)} - \cL_2^{(1)}) (d_2)_l \partial_m (d_1)_l \partial_i (Y_1)_m.
    \end{aligned}
\end{equation*}
Exploiting \eqref{eq:est of L2} for the estimate of $\| \cL_2^{(2)} (d_2)_l \|_{p,q}$ and \eqref{eq:est of del, dl in infty infty} to control $\| \partial_m (d_2)_l \|_{\infty,\infty}$, and estimating the difference $(\partial_i (Y_2)_m - \partial_i (Y_1)_m)$ by \autoref{lem:props of the transform}(a), we deduce that
\begin{equation*}
    \| \cL_2^{(2)} (d_2)_l \partial_m (d_2)_l (\partial_i (Y_2)_m - \partial_i (Y_1)_m) \|_{p,q} \le C (C_T^* + C_0 + R) T (R + C_T^*) \| \tz_1 - \tz_2 \|_{\E_T}.
\end{equation*}
Thanks to \autoref{lem:embedding of max reg space}(b), we further obtain
\begin{equation*}
    \| \cL_2^{(2)} (d_2)_l \partial_m (d_2 - d_1)_l \partial_i (Y_1)_m \|_{p,q} \le C(R + C_T^*) T^{\nicefrac{1}{p}} \| \td_1 - \td_2 \|_{\E_T^{d}}.
\end{equation*}
The last term can be estimated in a similar way.
\end{proof}

The result below on estimates of $F_1$ as introduced in \eqref{eq:nonlinear terms F_i} is a consequence of \autoref{lem:nonlinear ests Q} and the estimates of the other transformed terms from \autoref{sec:change of var} which can be obtained in a similar way as the estimates in \autoref{prop:nonlinear ests F1} or \autoref{lem:nonlinear ests Q}.
We also refer to \cite[Lemmas~6.6 and 6.8]{GGH:13}.

\begin{prop}\label{prop:nonlinear ests F0}
Let $p,q \in (1,\infty)$ satisfy \eqref{eq:cond p and q}, and for the reference solution $(z^*,p^*)$ from \autoref{prop:reference sol} and $(\tz,\tp)$, $(\tz_i,\tp_i) \in \cK_T^R$, let $(z,p) = (\tz + z^*,\tp + p^*)$ and $(z_i,p_i) = (\tz_i + z^*,\tp_i + p^*)$, $i=1,2$.
Recall $\Cmr > 0$ from \autoref{prop:max reg hom IVs}.
Then there are $C_1(R,T) > 0$ with  $C_1(R,T) < \nicefrac{R}{8 \Cmr}$ for $T>0$ sufficiently small and $L_1(R,T) > 0$ with $L_1(R,T) \to 0$ as $R \to 0$ and $T \to 0$ such that
\begin{equation*}
    \| F_1(\tz,\tp) \|_{\F_T^v} \le C_1(R,T), \tand \| F_1(\tz_1,\tp_1) - F_1(\tz_2,\tp_2) \|_{\F_T^v} \le L_1(R,T) \| (\tz_1,\tp_1) - (\tz_2,\tp_2) \|_{\tE_T}.
\end{equation*}
\end{prop}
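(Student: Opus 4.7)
The plan is to split
\begin{equation*}
F_1(\hz,\hp) = (\cL_1-\Delta)v - \cM v - \cN(v) - (\cG-\nabla)p + \cQ(d^*,\hd),
\end{equation*}
where $v = \hv + v^*$ and $p = \hp + p^*$, and to estimate each summand separately in $\F_T^v$. The term $\cQ(d^*,\hd)$ is already controlled by \autoref{lem:nonlinear ests Q}, and its Lipschitz bound follows directly from the Lipschitz estimate proved there, so only the four transform-induced terms require attention.

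For $(\cL_1-\Delta)v$, the explicit form of $\cL_1$ given in \autoref{sec:change of var} shows that every contribution contains either the factor $g^{jk}-\delta_{jk}$ or a single/higher derivative of $Y$ (which vanishes at the identity). Combining \autoref{lem:further aux ests} with the bounds on Christoffel symbols from \autoref{lem:ests covariant contrvariant christoffel} and the derivative bounds of \autoref{lem:props of the transform}(a), each such factor is $O(T(R+C_T^*))$ in $\rL^\infty(0,T;\rL^\infty)$, and it multiplies at most two derivatives of $v$, which are controlled by $\|v\|_{\E_T^v}\le R+C_T^*$. This yields
\begin{equation*}
\|(\cL_1-\Delta)v\|_{\F_T^v} \le CT(R+C_T^*)^2,
\end{equation*}
exactly as in \eqref{eq:est of cL_2}. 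The term $(\cG-\nabla)p$ is handled analogously via $\|g^{ij}-\delta_{ij}\|_{\infty,\infty}\le CT(R+C_T^*)$, producing $CT(R+C_T^*)\|p\|_{\E_T^{p}}$. For $\cM v$, the factors $\partial_t Y_j$, $\partial_t X_k$, together with the Christoffel symbols, are bounded in $\rL^\infty(0,T;\rL^\infty)$ by \autoref{lem:props of the transform} and \autoref{lem:ests covariant contrvariant christoffel}, yielding $\|\cM v\|_{\F_T^v}\le C T^{\nicefrac{1}{p}}(R+C_T^*)$ after a H\"older step in time. For the convective nonlinearity $\cN(v)$, which is bilinear in $v$ and $\nabla v$ up to Christoffel terms, \autoref{lem:embedding of max reg space}(b) gives $\|v\|_{\infty,\infty}\le C(\|\tv\|_{\E_T}+\|v^*\|_{\BUC([0,T];\rX_\gamma)})$, while a H\"older argument as in \eqref{eq:est of td in Lp W1q} provides $\|\nabla v\|_{p,q}\le CT^{\nicefrac{1}{p}}\|\tv\|_{\E_T^v}+\|\nabla v^*\|_{p,q}$; this yields $\|\cN(v)\|_{\F_T^v}\le C(R+C_0+C_T^*)(T^{\nicefrac{1}{p}}R+C_T^*)$. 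Summing these bounds together with $\|\cQ(d^*,\hd)\|_{\F_T^v}\le C_{\cQ}(R,T)$ and noting that every contribution can be made strictly less than $\nicefrac{R}{(8\Cmr)}$ by first shrinking $T$ (this absorbs the $C_T^*$-factors coming from $d^*$ and $v^*$ via \autoref{rem:norm of reference sol}) produces the required $C_1(R,T)$.

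For the Lipschitz estimates, the standard trick is to write each quantity applied at index $1$ minus the quantity at index $2$ as a telescoping sum so that exactly one factor carries a difference. For coefficients (metric tensors, Christoffel symbols, $\partial_t Y$, $Y$-derivatives, $Q$) the differences are $O(T\|\tz_1-\tz_2\|_{\E_T})$ by \autoref{lem:props of the transform}(a)--(d) and \autoref{lem:ests covariant contrvariant christoffel}; for $v$-factors the differences are estimated by $\|\tv_1-\tv_2\|_{\E_T^v}$ together with the embeddings of \autoref{lem:embedding of max reg space}, and the pressure difference enters linearly through $(\cG-\nabla)$. The $\cQ$-contribution is Lipschitz by \autoref{lem:nonlinear ests Q}. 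Adding the resulting bounds yields a Lipschitz constant $L_1(R,T)$ of the form $C(1+R+C_0+C_T^*)(T^{\nicefrac{1}{p}}+C_T^*)$, which tends to zero as $R,T\to0$ because $C_T^*\to0$ by \autoref{rem:norm of reference sol}.

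The main obstacle is the convective term $\cN(v)$, since unlike the other contributions it is genuinely bilinear in the solution and does not carry an intrinsic smallness factor such as $g^{ij}-\delta_{ij}$. Consequently it must be controlled purely by H\"older's inequality in time together with the trace embedding of \autoref{lem:embedding of max reg space}(b), and one has to verify that the $C_T^*$-contributions coming from $v^*$ (which do not vanish with $R$) still tend to zero as $T\to 0$; this is exactly what \autoref{rem:norm of reference sol} provides, and it is the reason why the smallness of both $R$ and $T$ is needed to close the estimate.
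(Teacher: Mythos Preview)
Your proposal is correct and follows essentially the same approach as the paper: the paper's proof is only a one-line reference stating that the estimates for $(\cL_1-\Delta)v$, $\cM v$, $\cN(v)$ and $(\cG-\nabla)p$ are obtained ``in a similar way as the estimates in \autoref{prop:nonlinear ests F1} or \autoref{lem:nonlinear ests Q}'' together with a citation of \cite[Lemmas~6.6 and~6.8]{GGH:13}, and your sketch simply spells out these details using the same tools (\autoref{lem:props of the transform}, \autoref{lem:ests covariant contrvariant christoffel}, \autoref{lem:further aux ests}, \autoref{lem:embedding of max reg space}) and the same splitting $v=\tv+v^*$ as in the proof of \autoref{prop:nonlinear ests F1}. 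One small clarification: the smallness of the Christoffel symbols you invoke for $(\cL_1-\Delta)v$ is not stated verbatim in \autoref{lem:ests covariant contrvariant christoffel} but follows from its difference estimate by comparing with the identity transform, exactly as in the proof of \autoref{lem:further aux ests}.
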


Finally, we prove estimates of the terms $F_3$ and $F_4$ appearing in the equations of the rigid body and defined in \eqref{eq:nonlinear terms F_i}.

\begin{prop}\label{prop:nonlinear ests F2 and F3}
Let $p,q \in (1,\infty)$ satisfy \eqref{eq:cond p and q}, and for the reference solution $(z^*,p^*)$ from \autoref{prop:reference sol} and $(\tz,\tp)$, $(\tz_i,\tp_i) \in \cK_T^R$, let $(z,p) = (\tz + z^*,\tp + p^*)$ and $(z_i,p_i) = (\tz_i + z^*,\tp_i + p^*)$, $i=1,2$.
Recall $\Cmr > 0$ from \autoref{prop:max reg hom IVs}.
Then there are $C_3(R,T)$, $C_4(R,T) > 0$ with  $C_3(R,T)$, $C_4(R,T) < \nicefrac{R}{8 \Cmr}$ for $T>0$ sufficiently small and $L_3(R,T)$, $L_4(R,T) > 0$ with $L_3(R,T)$, $L_4(R,T) \to 0$ as $R \to 0$ and $T \to 0$ such that
\begin{equation*}
    \begin{aligned}
        \| F_3(\tz,\tp) \|_{\F_T^\ell} 
        &\le C_3(R,T), \tand \| F_3(\tz_1,\tp_1) - F_3(\tz_2,\tp_2) \|_{\F_T^\ell} \le L_3(R,T) \| (\tz_1,\tp_1) - (\tz_2,\tp_2) \|_{\tE_T}, \taswellas\\
        \| F_4(\tz,\tp) \|_{\F_T^\ell} 
        &\le C_4(R,T), \tand \| F_4(\tz_1,\tp_1) - F_4(\tz_2,\tp_2) \|_{\F_T^\ell} \le L_4(R,T) \| (\tz_1,\tp_1) - (\tz_2,\tp_2) \|_{\tE_T}.
    \end{aligned}
\end{equation*}
\end{prop}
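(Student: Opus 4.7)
The plan is to decompose $F_3$ (and analogously $F_4$) into three structurally distinct pieces: a purely bilinear rigid-body contribution coming from $-\mS(\homega+\omega^{*})\times(\hell+\ell^{*})$ (respectively $(\homega+\omega^{*})\times J_0(\homega+\omega^{*})$), a surface integral of the stress tensor difference $(\rT-\cT)(\hv,\hp)$, and a surface integral of the director elastic stress $Q^{\top}\nabla(\hd+d^{*})(\nabla(\hd+d^{*}))^{\top}Q$. Each piece admits a smallness factor (a power of $T$ or a power of $R$), and this structure also transfers to the Lipschitz estimates by adding and subtracting in each bilinear/quadratic form.

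For the bilinear rigid-body term I would apply H\"older in time, the embedding $\rW^{1,p}(0,T) \hookrightarrow \rL^{\infty}(0,T)$ with $T$-independent constant for the zero-initial-value components $\hell,\homega$, and \autoref{lem:est of BUC trace by MR space and IVs} for the reference parts $\ell^{*},\omega^{*}$, producing a bound of the form $CT^{\nicefrac{1}{p}}(R+C_0+C_T^{*})^2$. For the stress tensor difference, the key observation is that the pressure terms cancel (since $Q\in\SO(3)$) and the remaining velocity part can be written as $(\delta_{ij}-Q_{ji})\partial_{j}\hv_{m}$-type expressions; using \autoref{lem:props of the transform}(b) I get $\|Q-\Id\|_{\infty}\le CT(R+C_0+C_T^{*})$, while the trace theorem controls the surface integral of $\nabla\hv$ by $C\|\hv\|_{\E_T^v}\le CR$, yielding a bound with a gain of a factor $T$. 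For the director stress piece, I use \autoref{lem:embedding of max reg space}(b) to obtain $\|\nabla(\hd+d^{*})\|_{\infty,\infty}\le C(R+C_0+C_T^{*})$ (via $\rB_{qp}^{2-\nicefrac{2}{p}}(\cF_0)\hookrightarrow \rC^1(\overline{\cF_0})$ from \eqref{eq:cond p and q}), trace on $\partial\cS_0$, and finally $\rL^{\infty}(0,T)\hookrightarrow \rL^p(0,T)$ to extract a factor $T^{\nicefrac{1}{p}}$. Combining and using $C_T^{*}\to 0$ as $T\to 0$ from \autoref{rem:norm of reference sol}, each contribution is $<\nicefrac{R}{8\Cmr}$ for $T$ sufficiently small, giving $C_3(R,T)$ and $C_4(R,T)$ with the required bound.

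The Lipschitz estimates follow by writing each quadratic form as a bilinear expansion, e.g.\ $\omega_1\times\ell_1-\omega_2\times\ell_2=(\omega_1-\omega_2)\times\ell_1+\omega_2\times(\ell_1-\ell_2)$ and $\nabla d_1(\nabla d_1)^{\top}-\nabla d_2(\nabla d_2)^{\top}=\nabla(d_1-d_2)(\nabla d_1)^{\top}+\nabla d_2(\nabla(d_1-d_2))^{\top}$, then applying the same embeddings and trace estimates so that one factor carries the smallness $\|\tz_1-\tz_2\|_{\E_T}$ or $\|(\tz_1,\tp_1)-(\tz_2,\tp_2)\|_{\tE_T}$ and the other is bounded by $C(R+C_0+C_T^{*})$. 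The stress difference $(\rT-\cT)(\hv_1-\hv_2,\hp_1-\hp_2)$ retains the factor $\|Q-\Id\|_{\infty}\le CT(R+C_T^{*})$, the director difference picks up $T^{\nicefrac{1}{p}}$ from the $\rL^{\infty}\hookrightarrow \rL^p$ embedding, and the bilinear rigid-body term again gains $T^{\nicefrac{1}{p}}$ from H\"older. The $F_4$ case is identical modulo the bounded multiplicative factor $y$ restricted to $\partial\cS_0$ (which is bounded since $\cS_0$ is bounded) and the cross-product-inertia structure handled analogously.

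The main obstacle is ensuring that all constants are $T$-independent where needed. This is delicate because generic embeddings of $\E_T$ into $\BUC$-spaces have time-dependent constants; the resolution is to systematically split $z=\tz+z^{*}$ so that the $\tz$-part has zero initial values (giving uniform embedding constants from \autoref{lem:embedding of max reg space}) and the reference $z^{*}$-part is controlled by \autoref{rem:norm of reference sol} (with $C_T^{*}\to 0$) and by the trace estimate \autoref{lem:est of BUC trace by MR space and IVs} (with $T$-independent constant). Once this bookkeeping is in place, a careful application of the transform estimates of \autoref{lem:props of the transform} and \autoref{lem:ests covariant contrvariant christoffel} yields $L_3(R,T), L_4(R,T)\to 0$ as $R,T\to 0$, completing the proof.
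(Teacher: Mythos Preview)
Your proposal is correct and follows essentially the same strategy as the paper: decompose $F_3$ and $F_4$ into the rigid-body bilinear term, the $(\rT-\cT)$ surface integral (for which the paper simply cites \cite{GGH:13}, relying on the same $\|Q-\Id\|_\infty\le CT(R+C_T^*)$ mechanism you describe), and the director elastic surface integral, extracting smallness either from a power of $T$ or from $C_T^*\to 0$. The only technical variation is that for the director surface integral the paper routes the self-map estimate through $\rH^{\nicefrac{3}{2}}(\cF_0)$ via the mixed derivative theorem (obtaining a bound $C(R+C_T^*+C_0)(T^{\nicefrac{1}{p}}R+C_T^*)$), while you go directly through $\rC^1(\overline{\cF_0})$ and pull out a uniform factor $T^{\nicefrac{1}{p}}$; for the Lipschitz part the paper uses an asymmetric $\rC^1\times\rW^{2,q}$ pairing on $\partial\cS_0$, which avoids needing the $T^{\nicefrac{1}{p}}$ factor and instead relies on $\|d_2\|_{\E_T^d}\le R+C_T^*$ for smallness. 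Both variants are valid under \eqref{eq:cond p and q}. One small omission in your Lipschitz discussion: besides $(\rT-\cT_1)(\hv_1-\hv_2,\hp_1-\hp_2)$ you must also handle $(\cT_1-\cT_2)(\hv_2,\hp_2)$, which comes from the dependence of $Q$ on $\omega$ and is controlled by $\|Q_1-Q_2\|_\infty\le CT\|\tomega_1-\tomega_2\|_\infty$ from \autoref{lem:props of the transform}(b); this is implicit in your final reference to the transform estimates but should be made explicit.
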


\begin{proof}
The estimates of $\omega \times \ell$ and $\omega \times (J_0 \omega)$ readily follow, so it remains to investigate the surface integrals.
For this, we introduce $\cJ \colon \rW^{\eps + \nicefrac{1}{q},q}(\cF_0)^{3 \times 3} \to \R^6$ defined by 
\begin{equation}\label{eq:joint est surface ints}
    \cJ(h) := \begin{pmatrix}
        \int_{\partial \cS_0} h N \rd \Gamma\\
        \int_{\partial \cS_0} y \times h N \rd \Gamma
    \end{pmatrix}, \twith | \cJ(h) | \le C \| h \|_{\rW^{\eps + \nicefrac{1}{q},q}(\cF_0)} \tfor h \in \rW^{\eps + \nicefrac{1}{q},q}(\cF_0)^{3 \times 3}.
\end{equation}
With regard to the surface integrals in $F_3$ and $F_4$, we observe that they consist of two parts, namely the part associated to the difference in the linearized stress tensor, $\rT - \cT$, and the nonlinear term resulting from the $d$-part in the stress tensor.
Concerning the first part, we observe that the estimates have already been established in \cite[Lemmas~6.6, 6.7 and 6.8]{GGH:13} by invoking \eqref{eq:joint est surface ints}.
Therefore, we can restrict ourselves to the respective estimates for the remaining $d$-part in the surface integrals.
Here, we use a direct approach to estimate the terms instead of \eqref{eq:joint est surface ints}.
From the continuity of the trace, we conclude
\begin{equation}\label{eq:spatial est d part surface int}
    \begin{aligned}
        \left| \int_{\del\cS_0} Q^\top (\nabla(\td + d^*)) ((\nabla (\td + d^*)))^\top Q N \rd \Gamma\right| &\le C |Q|^2 \| \td + d^* \|_{\rH^{\nicefrac{3}{2}}(\cF_0)}^2.
    \end{aligned}
\end{equation}
It further follows from the mixed derivative theorem and the condition $\nicefrac{2}{p} + \nicefrac{3}{q} < 1$ that
\begin{equation}\label{eq:emb max reg space into Linfty H32}
    \E_T^{d} \hookrightarrow \rH^{\theta,p}(0,T;\rH^{2(1-\theta),q}(\cF_0)^3) \hookrightarrow \rL^\infty(0,T;\rH^{\nicefrac{3}{2}}(\cF_0)^3), \tand \rX_\gamma^{d} \hookrightarrow \rH^{\nicefrac{3}{2}}(\cF_0)^3
\end{equation}
thanks to \cite[Theorem~4.6.1]{Tri:78}.
Applying the above equation \eqref{eq:spatial est d part surface int}, \autoref{lem:props of the transform}(b) to bound $\| Q \|_\infty$, the preceding embedding \eqref{eq:emb max reg space into Linfty H32} and \autoref{lem:est of BUC trace by MR space and IVs} for the estimate of $ \| d^* \|_{\BUC([0,T];\rX_\gamma^{d})}$ by the norm of the initial data and the reference solution, we derive that
\begin{equation}\label{eq:est d part surface ints}
    \begin{aligned}
        &\quad\left \| \int_{\del\cS_0} Q^\top (\nabla(\td + d^*)) ((\nabla (\td + d^*)))^\top Q N \rd \Gamma \right\|_p\\
        &\le C \| \td + d^* \|_{\rL^\infty(0,T;\rH^{\nicefrac{3}{2}}(\cF_0))} \| \td + d^* \|_{\rL^p(0,T;\rH^{\nicefrac{3}{2}}(\cF_0))}\\
        &\le C\left(\| \td \|_{\E_T^{d}} + \| d^* \|_{\BUC([0,T];\rX_\gamma^{d})}\right) \left(T^{\nicefrac{1}{p}} \| \td \|_{\rL^\infty(0,T;\rH^{\nicefrac{3}{2}}(\cF_0))} + \| d^* \|_{\E_T^{d}}\right)\\
        &\le C(R + C_T^* + C_0) (T^{\nicefrac{1}{p}}R + C_T^*).
    \end{aligned}
\end{equation}
Let us remark that the other surface integral involving the third addend of the stress tensor can be dealt with analogously.
The first part of the assertion then follows.

With respect to the Lipschitz estimate, we can expand the terms suitably to get a difference in each component.
For the differences in $Q$, we can use a similar estimate as \eqref{eq:est d part surface ints} together with \autoref{lem:props of the transform}(b) in order to get an estimate by the difference.
Concerning differences in $d$, a slight modification of the estimate is necessary.
In fact, as a preparation, we consider $d_1$, $d_2 \in \E_T$, where $d_1$ additionally satisfies $d_1(0) = 0$.
Let us observe that this corresponds to the situation of the difference being in $d_1$, i.e., $d_1 = \td_1 - \td_2$.
The other term can be handled completely analogously.
By \autoref{lem:embedding of max reg space}(b), we obtain in particular that $d_1$, $d_2 \in \BUC([0,T];\rC^1(\overline{\cF_0})^3)$.
Denoting by $q' \in (1,\infty)$ the H\"older conjugate of $q$, and using H\"older's inequality as well as continuity of the trace in addition to the aforementioned embedding, we get
\begin{equation*}
    \begin{aligned}
        \left|\int_{\del\cS_0} \nabla d_1 (\nabla d_2)^\top N \rd \Gamma \right|
        &\le C \| \nabla d_1 \|_{\rL^{q'}(\del\cS_0)} \| \nabla d_2 \|_{\rL^q(\del\cS_0)}\le C \| d_1 \|_{\rC^1(\overline{\cF_0})} \| d_2 \|_{\rW^{1+\nicefrac{1}{q} + \eps,q}(\cF_0)}\\
        &\le C \| d_1 \|_{\rC^1(\overline{\cF_0})} \| d_2 \|_{\rW^{2,q}(\cF_0)}.
    \end{aligned}
\end{equation*}
Therefore, employing H\"older's inequality in time in conjunction with \autoref{lem:props of the transform}(b) to control $\| Q \|_\infty$ and \autoref{lem:embedding of max reg space}(b) for an embedding, we infer that
\begin{equation*}
    \left\| \int_{\del\cS_0} Q^\top \nabla d_1 (\nabla d_2)^\top Q N \right\|_p \le C \| d_1 \|_{\rL^\infty(0,T;\rC^1(\overline{\cF_0}))} \| d_2 \|_{\rL^p(0,T;\rW^{2,q}(\cF_0))} \le C \| d_1 \|_{\E_T^d} \| d_2 \|_{\E_T^d}.
\end{equation*}
The embedding constant resulting from \autoref{lem:embedding of max reg space}(b) is $T$-independent thanks to the homogeneous initial values.
This shows how to estimate the terms with differences in the $d$-terms and then completes the proof as the terms corresponding to the second surface integral can be treated by analogy.
\end{proof}

\subsection{Proof of \autoref{thm:local wp}}\label{ssec:proof of local wp}
\

We first show \autoref{thm:local wp reformulated} and then deduce \autoref{thm:local wp} by performing the backward change of variables.

\begin{proof}[Proof of \autoref{thm:local wp reformulated}]
For $(\tz,\tp) \in \cK_T^R$, in view of \eqref{eq:syst for fixed point}, it follows from \autoref{prop:max reg hom IVs}, \autoref{prop:nonlinear ests F1}, \autoref{prop:nonlinear ests F0} and \autoref{prop:nonlinear ests F2 and F3} that
\begin{equation*}
    \| \Phi_T^R(\tz,\tp) \|_{\tE_T} \le \Cmr \| (F_1(\tz,\tp), F_2(\tz),F_3(\tz,\tp),F_4(\tz,\tp)) \|_{\F_T} \le \Cmr C(R,T),
\end{equation*}
where given $R>0$, we have $C(R,T) < \nicefrac{R}{2 \Cmr}$ for $T > 0$ sufficiently small.
On the other hand, the same arguments imply that for $(\tz_1,\tp_1)$, $(\tz_2,\tp_2) \in \cK_T^R$, we get
\begin{equation*}
    \| \Phi_T^R(\tz_1,\tp_1) - \Phi_T^R(\tz_2,\tp_2) \|_{\tE_T} \le \Cmr L(R,T) \| (\tz_1,\tp_1) - (\tz_2,\tp_2) \|_{\tE_T}.
\end{equation*}
Hence, by the time-independence of $\Cmr > 0$, we choose $R>0$ and $T>0$ sufficiently small such that $\Cmr C(R,T) \le \nicefrac{R}{2}$ and $\Cmr L(R,T) \le \nicefrac{1}{2}$, so $\Phi_T^R$ is a self map and contraction.
The contraction mapping principle yields the existence of a unique fixed point $(\hz,\hp) \in \prescript{}{0}{\E_T} \times \E_T^p$.
Adding $(z^*,p^*)$ from \autoref{prop:reference sol}, we get the unique solution $(z,p) = (\hz + z^*,\hp + p^*) \in \tE_T$ to \eqref{eq:cv1}--\eqref{eq:cv3}.
Let us observe that $(z,p) \in \tE_T$ corresponds to $(v,d,\ell,\omega,p)$ being in the regularity class as asserted in \autoref{thm:local wp reformulated}.
\end{proof}

\begin{proof}[Proof of \autoref{thm:local wp}]
From $(\ell,\omega) \in \rW^{1,p}(0,T)^6$ emerging from \autoref{thm:local wp reformulated}, we derive $h'$, $\Omega$ as well as the diffeomorphism $X$ as described in \autoref{rem:procedure to determine diffeos}.
Performing the backward change of variables and coordinates as pointed out in \autoref{sec:change of var}, we conclude the assertion of \autoref{thm:local wp} from \autoref{thm:local wp reformulated} upon adding $d_*$ in the $d$-component.
The uniqueness of the solution is a consequence of the uniqueness of the diffeomorphism.
Moreover, the average zero condition of the pressure is preserved in view of the definition of the transformed pressure $p$ on the fixed domain, see \autoref{sec:change of var}.
\end{proof}

\section{Proof of the global strong well-posedness close to equilibria}\label{sec:proof global strong wp}

In this section, we prove the second main result of this paper on the global strong well-posedness of the interaction problem \eqref{eq:LC-fluid equationsiso}--\eqref{eq:initial} for initial data close to constant equilibria.
To this end, we establish a maximal $\rL^p([0,\infty))$-regularity type result in  \autoref{ssec:analysis of lin probl and max reg} by using a splitting into the mean value-zero part and the average in the $d$-component in order to get exponential decay.
\autoref{ssec:ests of the nonlinear terms} is dedicated to showing estimates of the nonlinear terms tailored to the adjusted setting.
These estimates lead to the proof of the second main result in \autoref{ssec:proof of global wp close to equilibria} by means of a fixed point argument.

\subsection{Analysis of the linearized system}\label{ssec:analysis of lin probl and max reg}
\

Let us recall from \autoref{prop:max zero infty reg up to shift} that the maximal regularity of \eqref{eq:linearization global} is only obtained up to a shift.
It can be shown that we obtain maximal $\rL^p([0,\infty))$-regularity of the linearized system provided for the associated resolvent problem, we can establish that it admits a unique solution for all $\lambda \in \C_- \cup \{0\}$.
The resolvent problem takes the shape
\begin{equation}\label{eq:resolvent problem}
\left\{
    \begin{aligned}
        \lambda v -\Delta {v} +\nabla {p} &=g_1, \enspace \mdiv {v} = 0, \tand \lambda d - \Delta d = g_2, &&\tin \cF_0,\\
        \lambda \mS {\ell} + \int_{\del \cS_0} \rT(v,p) N \rd \Gamma &= g_3, \tand \lambda J_0 {\omega} + \int_{\del \cS_0} y \times \rT(v,p) N \rd \Gamma = g_4, &&\tin \R^3,\\ 
        {v}&=\ell + \omega \times y, \ton \del \cS_0, \enspace v = 0, \ton \del \cO, \tand \partial_\nu d = 0, &&\ton \partial \cF_0.
    \end{aligned}
\right.
\end{equation}
When employing the operator theoretic formulation as in \cite[Section~3]{MT:18} together with the Neumann Laplacian operator for the director part, it readily follows that the resulting operator has a compact resolvent, so the spectrum only consists of eigenvalues, and it suffices to consider the $\rL^2$-case.
Hence, considering $(v,d,\ell,\omega,p)$ solving \eqref{eq:resolvent problem} with $(g_1,g_2,g_3,g_4) = (0,0,0,0)$, we test the resulting fluid equation by $v$, integrate it over $\cF_0$, and employ $\mdiv v = 0$ as well as the boundary conditions $v = 0$ on $\partial \cO$ and $v = \ell + \omega \times y$.
Moreover, observing that $-\Delta v + \nabla p = -\mdiv (2 \D v - p \Id)$, we deduce from integration by parts, from $a \cdot (b \times c) = b \cdot (c \times a)$ and from $\ell$ and $\omega$ being constant in space that
\begin{equation}\label{eq:testing the fluid equation}
    \begin{aligned}
        0 
        &= - \int_{\cF_0} \mdiv  (2 \D v - p \Id) \cdot v \rd y = \int_{\cF_0} |\nabla v|^2 \rd y - \int_{\partial \cS_0} (2 \D v - p \Id) \cdot v N \rd \Gamma\\
        &= \int_{\cF_0} |\nabla v|^2 \rd y - \ell \cdot \int_{\partial \cS_0} (2 \D v - p \Id) N \rd \Gamma - \omega \cdot \int_{\partial \cS_0} y \times (2 \D v - p \Id) N \rd \Gamma.
    \end{aligned}
\end{equation}
Therefore, testing the resolvent problem with right-hand sides equal to zero by $(v,d,\ell,\omega)$, integrating by vector parts, invoking \eqref{eq:testing the fluid equation} and additionally using that $\partial_\nu d = 0$ on $\partial \cF_0$, we obtain 
\begin{equation}\label{eq:testing the eigenvalue equation}
    0 = \lambda\left(\| v \|_{\rL^2(\cF_0)}^2 + \| d \|_{\rL^2(\cF_0)}^2 + |\ell|^2 + |\omega|^2\right) + \| \nabla v \|_{\rL^2(\cF_0)}^2 + \| \nabla d \|_{\rL^2(\cF_0)}^2.
\end{equation}
As $\| \nabla v \|_{\rL^2(\cF_0)}^2$ and $\| \nabla d \|_{\rL^2(\cF_0)}^2$ are real and non-negative, it follows that $\lambda \in \R$ and $\lambda \le 0$.
In the case $\lambda = 0$, we derive from \eqref{eq:testing the eigenvalue equation} that $0 = \| \nabla v \|_{\rL^2(\cF_0)}^2 + \| \nabla d \|_{\rL^2(\cF_0)}^2$, i.e., $v$ and $d$ are constant on $\cF_0$.
In view of $v = 0$ on $\partial \cO$, we conclude that $v = 0$.
As the velocity of the fluid and the rigid body coincide on their interface, we deduce therefrom that $\ell + \omega \times y = 0$ on $\partial \cS_0$.
Similarly as in \autoref{ssec:energy}, using the 3D analogue of \cite[Lemma~2.1]{RT:19}, this implies that $\ell = \omega = 0$.
For the pressure $p$ with average zero, it also follows that $p = 0$.
On the other hand, in order to remedy the fact that $d$ can be constant, it is natural to invoke the space of average zero functions $\rL_0^q(\cF_0)$.
It readily follows that $\rL_0^q(\cF_0)$ is invariant under the semigroup $\mre^{\Delta_{\rN}t}$ and that the restriction of $\Delta_{\rN}$ to $\rL_0^q(\cF_0)$ is invertible.
This yields that when invoking $\rL_0^q(\cF_0)^3$ for the $d$-equation, the resolvent problem admits a unique solution for all $\lambda \in \C_+ \cup \{0\}$.

In order to account for the average zero condition in the $d$-part, we introduce some further notation.
In fact, for $\rX_0$, $\rX_1$ and $\rX_\gamma$ as made precise in \eqref{eq:ground and regularity space} and \eqref{eq:trace space}, respectively, and for $i \in \{0,1,\gamma\}$, we define
\begin{equation*}
    \rX_i^m \coloneqq \left\{z = (v,d,\ell,\omega) \in \rX_i : d \in \rL_0^q(\cF_0)^3\right\}.
\end{equation*}
Accordingly, for $\F_T^v$, $\F_T^\ell$ and $\F_T^\omega$ as made precise in \eqref{eq:data space}, we set
\begin{equation}\label{eq:data and sol space average zero}
    \begin{aligned}
        \F_T^m 
        &\coloneqq \F_T^v \times \rL^p\left(0,T;\rL_0^q(\cF_0)^3\right) \times \F_T^\ell \times \F_T^\omega,\\
        \E_T^m 
        &\coloneqq \rW^{1,p}(0,T;\rX_0^m) \cap \rL^p(0,T;\rX_1^m), \tand \tE_T^m \coloneqq \E_T^m \times \E_T^p.
    \end{aligned}
\end{equation}

We summarize the preceding discussion in the following proposition.

\begin{prop}\label{prop:max reg on average zero space}
Let $p,q \in (1,\infty)$ satisfy \eqref{eq:cond p and q}, and consider $d_* \in \R^3$ constant such that $|d_*| = 1$.
Then for all $\mu \ge 0$, $(g_1,g_2,g_3,g_4) \in \F_\infty^m$ and $(v_0,d_0 - d_*,\ell_0,\omega_0) \in \rX_\gamma^m$, there exists a unique solution $(v,d,\ell,\omega,p) \in \tE_\infty^m$ to \eqref{eq:linearization global}.
Besides, for $C > 0$, we obtain the estimate
\begin{equation*}
    \| (v,d,\ell,\omega,p) \|_{\tE_\infty^m} \le C \left(\| (g_1,g_2,g_3,g_4) \|_{\F_\infty^m} + \| (v_0,d_0 - d_*,\ell_0,\omega_0) \|_{\rX_\gamma^m}\right).
\end{equation*}
\end{prop}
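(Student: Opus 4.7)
The plan is to upgrade the shifted maximal $\rL^p([0,\infty))$-regularity from \autoref{prop:max zero infty reg up to shift} to the unshifted case $\mu = 0$ by showing that the generator of the linearized semigroup associated to \eqref{eq:linearization global} is invertible once the director component is restricted to the average-zero subspace $\rL_0^q(\cF_0)$. A standard semigroup perturbation argument then converts the already established invertibility of $A + \mu$ for $\mu > \mu_0$ into maximal regularity for $A$ itself on $\rX_0^m$.

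Since the $d$-equation in \eqref{eq:linearization global} is completely decoupled from the fluid-structure block, I would first handle it separately. The Neumann Laplacian on $\rL^q(\cF_0)$ leaves $\rL_0^q(\cF_0)$ invariant, and its restriction to this subspace has $0$ in its resolvent set (the would-be kernel consists of constants, and the only average-zero constant is $0$); consequently it admits maximal $\rL^p([0,\infty))$-regularity without a shift for initial data in the corresponding trace space, with exponential decay at rate up to the spectral gap. For the fluid-structure subsystem I would use the operator-theoretic formulation of \cite{MT:18}: since $\cF_0$ is bounded, the associated generator has compact resolvent, so its spectrum consists exclusively of isolated eigenvalues. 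Testing a homogeneous-resolvent eigenfunction $(v,d,\ell,\omega)$ against itself in $\rL^2$ — using $\mdiv v = 0$, $v = 0$ on $\partial\cO$, and the kinematic condition $v = \ell + \omega\times y$ on $\partial\cS_0$ exactly as displayed in \eqref{eq:testing the fluid equation}--\eqref{eq:testing the eigenvalue equation} — gives
\begin{equation*}
  \lambda\bigl(\| v\|_{\rL^2(\cF_0)}^2 + |\ell|^2 + |\omega|^2\bigr) + \|\nabla v\|_{\rL^2(\cF_0)}^2 = 0,
\end{equation*}
forcing $\lambda \in [0,\infty)$; for $\lambda = 0$ the Dirichlet condition yields $v \equiv 0$, and applying the 3D analogue of \cite[Lemma~2.1]{RT:19} to $\ell + \omega\times y \equiv 0$ on $\partial\cS_0$ yields $\ell = \omega = 0$. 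The pressure is then recovered uniquely in $\rL_0^q(\cF_0)$ from classical Stokes theory.

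Assembling these two pieces, the full generator is invertible on $\rX_0^m$, so shifting back by $-\mu$ in \autoref{prop:max zero infty reg up to shift} and reabsorbing the resulting zero-order term via a Neumann series in the resolvent produces the claimed maximal $\rL^p([0,\infty))$-regularity with $\mu = 0$, together with the estimate in $\tE_\infty^m$. The main obstacle is that the energy computation is naturally carried out in $\rL^2$ while the statement lives in $\rL^q$. I would bridge this by observing that eigenfunctions of the compact-resolvent generator are $\rC^\infty$-smooth by elliptic regularity (interior regularity for the Stokes block on the fluid domain, smoothness of $\partial\cO$ and $\partial\cS_0$ up to the boundary, and finite-dimensionality of the structure component), so they belong to every $\rL^q(\cF_0)$; hence the absence of eigenvalues in $\overline{\C_+}$ transfers unchanged from $q = 2$ to general $q \in (1,\infty)$, and combined with the decoupled $d$-estimate this gives the uniform-in-$q$ conclusion.
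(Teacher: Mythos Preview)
Your proposal is correct and follows essentially the same route as the paper's own argument (which appears as the discussion immediately preceding the proposition rather than as a separate proof block): decouple the $d$-equation and use invertibility of $-\Delta_{\rN}$ on $\rL_0^q(\cF_0)$, then exploit compactness of the resolvent to reduce the fluid--structure spectrum to eigenvalues, test in $\rL^2$ via \eqref{eq:testing the fluid equation}--\eqref{eq:testing the eigenvalue equation}, and eliminate the $\lambda=0$ case through the Dirichlet condition together with the 3D analogue of \cite[Lemma~2.1]{RT:19}. Your added remark on transferring the eigenvalue exclusion from $q=2$ to general $q$ via elliptic regularity of eigenfunctions is a welcome clarification that the paper leaves implicit.
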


As the resolvent set is open, we even obtain the existence of $\eta_0$ such that the semigroup associated to the operator $A_0^m$ corresponding to the linearized problem is exponentially stable, i.e., for $z = (v,d,\ell,\omega) \in \rX_0^m$, we have
\begin{equation}\label{eq:exp stability restricted semigroup}
    \| \mre^{-A_0^m t} (v,d,\ell,\omega) \|_{\rX_0^m} \le C \mre^{-\eta_0 t} \| (v,d,\ell,\omega) \|_{\rX_0^m}.
\end{equation}

The average zero condition is generally not preserved by the equation of the director $d$ in the interaction problem, so we decompose the equation into two parts.
More precisely, for a function $f \in \rL^1(\cF_0)$, we introduce the splitting 
\begin{equation}\label{eq:splitting in mean zero and avg}
    f = f_m + f_\avg, \enspace \text{where} \enspace f_\avg \coloneqq \frac{1}{|\cF_0|} \int_{\cF_0} f(y) \rd y \enspace \text{and} \enspace f_m \coloneqq f - f_\avg,
\end{equation}
resulting in $f_m \in \rL_0^1(\cF_0)$.
For $\F_T$ and $\tE_T^m$ as defined in \eqref{eq:data space} and \eqref{eq:data and sol space average zero} as well as $z = (v,d,\ell,\omega)$, we set
\begin{equation*}
    \mre^{-\eta(\cdot)} \F_\infty \coloneqq \left\{z \in \F_\infty : \mre^{\eta(\cdot)} z \in \F_\infty\right\}, \tand \mre^{-\eta(\cdot)} \tE_\infty^m \coloneqq \left\{(z,p) \in \tE_\infty^m : \mre^{\eta(\cdot)} (z,p) \in \tE_\infty^m\right\}.
\end{equation*}
The maximal regularity type result concerning the linearized problem \eqref{eq:linearization global} then reads as follows.
Let us observe that we may consider \eqref{eq:linearization global} with $\mu = 0$ by virtue of \autoref{prop:max reg on average zero space}.

\begin{prop}\label{prop:max reg type result global}
Let $p,q \in (1,\infty)$ satisfy \eqref{eq:cond p and q}, let $d_* \in \R^3$ be constant such that $|d_*| = 1$, and consider $\eta \in (0,\eta_0)$, where $\eta_0$ is the constant from \eqref{eq:exp stability restricted semigroup}.
Then for all $(v_0,d_0-d_*,\ell_0,\omega_0) \in \rX_\gamma^m$ and for all $(g_1,g_2,g_3,g_4) \in \mre^{-\eta(\cdot)} \F_\infty$ with $g_{2,\avg} \in \rL^1(0,\infty)$, \eqref{eq:linearization global} with $\mu = 0$ has a unique solution $(v,d,\ell,\omega,p)$ such that
\begin{equation*}
    (v,d_m,\ell,\omega,p) \in \mre^{-\eta(\cdot)} \tE_\infty^m, \tand d_\avg \in \rL^\infty(0,\infty),
\end{equation*}
and there exists $\Cexp = \Cexp(p,q,\eta) > 0$ such that
\begin{equation*}
    \begin{aligned}
        & \quad \left\| \mre^{\eta(\cdot)}(v,d_m,\ell,\omega,p) \right\|_{\tE_\infty^m} + \left\| d_\avg \right\|_{\rL^\infty(0,\infty)} + \left\| \mre^{\eta(\cdot)} \partial_t d_\avg \right\|_{\rL^p(0,\infty)}\\
        &\le \Cexp\left(\left\| (v_0,d_0 - d_*,\ell_0,\omega_0) \right\|_{\rX_\gamma^m} + \left\| \mre^{\eta(\cdot)} (g_1,g_2,g_3,g_4) \right\|_{\F_\infty} + \left\| g_{2,\avg} \right\|_{\rL^1(0,\infty)}\right).
    \end{aligned}
\end{equation*}
\end{prop}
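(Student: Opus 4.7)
The plan is to decompose the director $d$ according to \eqref{eq:splitting in mean zero and avg} as $d = d_m + d_\avg$, handle the scalar-valued ODE governing $d_\avg$ directly, and treat the remaining mean-zero system for $(v, d_m, \ell, \omega, p)$ by applying an exponential shift to the maximal regularity statement of \autoref{prop:max reg on average zero space} in combination with the exponential stability \eqref{eq:exp stability restricted semigroup}. First I would integrate the third equation in \eqref{eq:linearization global} over $\cF_0$, using the homogeneous Neumann boundary condition $\partial_\nu d = 0$ on $\partial \cF_0$ to eliminate $\int_{\cF_0} \Delta d \rd y$. This yields the ODE
\begin{equation*}
    \partial_t d_\avg = g_{2,\avg}, \enspace d_\avg(0) = (d_0 - d_*)_\avg = 0,
\end{equation*}
where the vanishing of the initial value is precisely the assumption $(v_0, d_0 - d_*, \ell_0, \omega_0) \in \rX_\gamma^m$. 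Hence $d_\avg(t) = \int_0^t g_{2,\avg}(s) \rd s$, and the hypothesis $g_{2,\avg} \in \rL^1(0,\infty)$ yields $d_\avg \in \rL^\infty(0,\infty)$ with the desired bound; the weighted $\rL^p$-bound for $\partial_t d_\avg$ is immediate from $\mre^{\eta(\cdot)} g_2 \in \F_\infty^v$ and the boundedness of the averaging functional on $\rL^q(\cF_0)$.

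Subtracting this ODE from the original $d$-equation yields a closed system for $(v, d_m, \ell, \omega, p)$ that is exactly \eqref{eq:linearization global} with $\mu = 0$, modified right-hand side $(g_1, g_{2,m}, g_3, g_4) \in \F_\infty^m$, and initial data in $\rX_\gamma^m$. \autoref{prop:max reg on average zero space} (with $\mu = 0$) provides a unique solution in $\tE_\infty^m$ together with the associated unweighted maximal regularity estimate; the key task is then upgrading this to a weighted estimate with factor $\mre^{\eta(\cdot)}$.

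To this end, I would set $\tv \coloneqq \mre^{\eta(\cdot)} v$, $\td_m \coloneqq \mre^{\eta(\cdot)} d_m$, $\tell \coloneqq \mre^{\eta(\cdot)} \ell$, $\tomega \coloneqq \mre^{\eta(\cdot)} \omega$ and $\tp \coloneqq \mre^{\eta(\cdot)} p$. A short computation shows that $(\tv, \td_m, \tell, \tomega, \tp)$ solves the same linearized system but with $\mu$ replaced by $-\eta$ and right-hand sides replaced by $\mre^{\eta(\cdot)}(g_1, g_{2,m}, g_3, g_4) \in \F_\infty^m$. The observation underpinning the proof is that by the exponential stability \eqref{eq:exp stability restricted semigroup} of the semigroup generated by $-A_0^m$, the operator $A_0^m - \eta$ on $\rX_0^m$ still has spectrum in the open left half-plane for every $\eta \in (0, \eta_0)$, so maximal $\rL^p([0,\infty))$-regularity is inherited from $A_0^m$ under the bounded perturbation by $-\eta \Id$ (see, e.g., \cite{PS:16}). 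Applying the resulting shifted maximal regularity to the system for $(\tv, \td_m, \tell, \tomega, \tp)$ produces the desired weighted estimate, and combining with the bounds for $d_\avg$ from the first step yields the full inequality in the proposition.

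The main obstacle will be making the exponential shift argument rigorous in the fluid-structure setting, since \autoref{prop:max reg on average zero space} is stated only for $\mu \ge 0$: one must use \eqref{eq:exp stability restricted semigroup} to guarantee invertibility of $A_0^m - \eta$ on $\rX_0^m$ and then invoke the classical stability of maximal $\rL^p$-regularity under such small bounded shifts to transfer the estimate from $A_0^m$ to $A_0^m - \eta$. Once this is in place, the remaining steps are bookkeeping: collecting the $\rL^\infty$- and weighted $\rL^p$-bounds on $d_\avg$ with the weighted $\tE_\infty^m$-estimate on $(v, d_m, \ell, \omega, p)$ to obtain the claimed inequality with constant $\Cexp = \Cexp(p,q,\eta)$.
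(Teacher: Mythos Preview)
Your proposal is correct and follows essentially the same route as the paper: split $d = d_m + d_\avg$, read off $\partial_t d_\avg = g_{2,\avg}$ by integrating the Neumann heat equation, treat the mean-zero remainder via \autoref{prop:max reg on average zero space}, and obtain the weighted bound by the exponential shift combined with \eqref{eq:exp stability restricted semigroup}. The only cosmetic difference is that the paper organizes the $d$-computation through auxiliary functions $\varphi^1$, $\varphi^2$ (a lifting of the initial data with an artificial shift $\mu'$), which in effect reproduces your direct ODE argument once one uses that $(d_0-d_*)_\avg = 0$ by the assumption $(v_0,d_0-d_*,\ell_0,\omega_0)\in\rX_\gamma^m$; your streamlined version is cleaner for exactly this reason.
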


\begin{proof}
We focus on the case $\eta = 0$ and remark that the situation of $\eta > 0$ readily follows by multiplying the functions by $\mre^{\eta t}$ and making use of the maximal regularity of $A_0^m + \eta$ as well as the exponential stability in view of the invertibility.
We also observe that the $d$-component in \eqref{eq:linearization global} is decoupled from the remaining part, and the desired estimates in the components $v$, $\ell$ and $\omega$ hence follow from \autoref{prop:max reg on average zero space}, so it only remains to investigate the $d$-component.
We split the problem in several parts and first consider the lifting of the initial values.
To this end, we introduce $\varphi^1$ solving
\begin{equation}\label{eq:lifting initial values}
    \begin{aligned}
        \partial_t \varphi^1 + \mu' \varphi^1 - \Delta \varphi^1 &= 0, &&\tin (0,\infty) \times \cF_0,\\
        \partial_\nu \varphi^1 &= 0, &&\ton (0,\infty) \times \partial \cF_0,\\
        \varphi^1(0) &= d_0 - d_*, &&\tin \cF_0.
    \end{aligned}
\end{equation}
In \eqref{eq:lifting initial values}, $\mu' > 0$ is sufficiently large to guarantee the existence and uniqueness of a strong solution 
\begin{equation*}
    \varphi^1 \in \rL^p(0,\infty;\rW_{\rN}^{2,q}(\cF_0)) \cap \rW^{1,p}(0,\infty;\rL^q(\cF_0))
\end{equation*}
by maximal regularity of the Neumann Laplacian, see e.g.\ \cite[Theorem~8.2]{DHP:03}.
Performing the splitting $\varphi^1 = \varphi_m^1 + \varphi_\avg^1$ as introduced in \eqref{eq:splitting in mean zero and avg}, we infer that $\varphi_\avg^1$ satisfies
\begin{equation}\label{eq:varphi1 avg}
    \varphi_\avg^1(t) = (d_{0,\avg} - d_*) \mre^{-\mu' t}.
\end{equation}
This also implies $\varphi_\avg^1 \in \rL^r(0,\infty)$ for all $r \in [1,\infty]$.
Moreover, for $g_2 = g_{2,m} + g_{2,\avg}$, we set 
\begin{equation}\label{eq:varphi2}
    \varphi^2(t) \coloneqq \int_0^t g_{2,\avg}(s) + \mu' \varphi_\avg^1(s) \rd s.
\end{equation}
The assumption $g_{2,\avg} \in \rL^1(0,\infty)$ and the above observation on $\varphi_\avg^1$ yield that $\varphi^2$ is well-defined, and
\begin{equation}\label{eq:Linfty est varphi2}
    |\varphi^2(t)| \le C \left(\| g_{2,\avg} \|_{\rL^1(0,\infty)} + \| \varphi_\avg^1 \|_{\rL^1(0,\infty)}\right)
\end{equation}
for all $t \in (0,\infty)$.
Let us also observe that $\varphi^2 = \varphi_\avg^2$ because $\varphi^2$ is constant in space by definition.
For $\varphi^1$ as in \eqref{eq:lifting initial values}, $\varphi^2$ as in \eqref{eq:varphi2} and $d$ solving the $d$-part associated to \eqref{eq:linearization global}, i.e., $\partial_t d - \Delta d = g_2$ in $(0,\infty) \times \cF_0$, $\partial_\nu d = 0$ on $(0,\infty) \times \partial \cF_0$ and $d(0) = d_0 - d_*$ in $\cF_0$, we define $\Tilde{\varphi} \coloneqq d - \varphi^1 - \varphi^2$.
By construction, it follows that $\Tilde{\varphi}_\avg = 0$.
In addition, we obtain
\begin{equation*}
    \frac{1}{|\cF_0|} \int_{\cF_0} (d_0(y) - d_*) \rd y = d_{0,\avg} - d_*.
\end{equation*}
In view of $\rB_{qp}^{2-\nicefrac{2}{p}}(\cF_0) \hookrightarrow \rL^\infty(\cF_0)$ thanks to \eqref{eq:cond p and q}, we infer that
\begin{equation}\label{eq:est of d_0avg - d*}
    |d_{0,\avg} - d_* | \le \| d_0 - d_* \|_{\rL^\infty(\cF_0)} \le C \| d_0 - d_* \|_{\rB_{qp}^{2-\nicefrac{2}{p}}(\cF_0)}.
\end{equation}
In the end, we use the splitting $d = d_m + d_\avg$ from \eqref{eq:splitting in mean zero and avg} for the solution $d$ of the $d$-component of~\eqref{eq:linearization global}.
The above observations yield that $d_\avg = \varphi_\avg^1 + \varphi^2$, so \eqref{eq:varphi1 avg}, \eqref{eq:Linfty est varphi2} and \eqref{eq:est of d_0avg - d*} lead to
\begin{equation}\label{eq:est Linfty davg}
    \| d_\avg \|_{\rL^\infty(0,\infty)} \le C\left(\| d_0 - d_* \|_{\rB_{qp}^{2-\nicefrac{2}{p}}(\cF_0)} + \| g_{2,\avg} \|_{\rL^1(0,\infty)}\right).
\end{equation}
We calculate $\partial_t d_\avg = g_{2,\avg}$.
By definition of $g_{2,\avg}$, we find $\| g_{2,\avg} \|_{\rL^p(0,\infty)} \le C \| g_2 \|_{\rL^p(0,\infty;\rL^q(\cF_0))}$, so
\begin{equation}\label{eq:est Linfty partial_t davg}
    \| \partial_t d_\avg \|_{\rL^p(0,\infty)} \le C \| g_2 \|_{\F_\infty^d}.
\end{equation}
A short computation yields that $d_m = d - d_\avg$ solves $\partial_t d_m - \Delta d_m = g_{2,m}$ in $(0,\infty) \times \cF_0$, $\partial_\nu d_m = 0$ on $(0,\infty) \times \partial \cF_0$ and $d_m(0) = d_{0,m}$ in $\cF_0$.
From the maximal $\rL^p(0,\infty)$-regularity of $-\Delta_{\rN}$ on $\rL_0^q(\cF_0)$, \eqref{eq:est of d_0avg - d*} and the aforementioned estimate
\begin{equation*}
    \| g_{2,\avg} \|_{\rL^p(0,\infty)} \le C \| g_2 \|_{\rL^p(0,\infty;\rL^q(\cF_0))},
\end{equation*}
we derive that
\begin{equation}\label{eq_est d_m}
    \| d_m \|_{\E_\infty^d} \le C\left(\| g_{2,m} \|_{\F_\infty^d} + \| d_0 - d_{0,\avg} \|_{\rB_{qp}^{2-\nicefrac{2}{p}}(\cF_0)}\right) \le C\left(\| g_2 \|_{\F_\infty^d} + \| d_0 - d_* \|_{\rB_{qp}^{2-\nicefrac{2}{p}}(\cF_0)}\right).
\end{equation}
The assertion of the proposition follows by concatenating \eqref{eq:est Linfty davg}, \eqref{eq:est Linfty partial_t davg} and \eqref{eq_est d_m} thanks to the above reduction arguments. 
\end{proof}

\subsection{Estimates of the nonlinear terms}\label{ssec:ests of the nonlinear terms}
\ 

First, we introduce the nonlinear terms, resulting from the linearization in \eqref{eq:linearization global} with $\mu = 0$ 
\begin{equation}\label{eq:nonlin terms global}
    \begin{aligned}
        G_1(z,p) &= -\cM v + (\cL_1 - \Delta)v - \cN(v) - (\cG  - \nabla)p - \cB(d)d,\\
        G_2(z) &= (\cL_2 - \Delta) d - \nabla d \cdot \partial_t Y + (v \cdot \nabla)d + |\nabla d|^2 d + |\nabla d|^2 d_*,\\ 
        G_3(z,p) &= - \mS \omega \times \ell - \int_{\partial \cS_0} Q^\top \nabla d [\nabla d]^\top Q N \rd \Gamma - \int_{\partial \cS_0} Q^\top (Q \D v Q^\top - 2 \D v)Q N \rd \Gamma,\\
        G_4(z,p) &= \omega \times (J_0 \omega) - \int_{\partial \cS_0} y \times Q^\top \nabla d [\nabla d]^\top Q N \rd \Gamma - \int_{\partial \cS_0} y \times Q^\top (Q \D v Q^\top - 2 \D v)Q N \rd \Gamma.
    \end{aligned}
\end{equation}
The aim of this section is to provide estimates of $G_1$, $G_2$, $G_3$ and $G_4$ tailored to the situation of \autoref{prop:max reg type result global}.
In the sequel, we fix $\eta \in (0,\eta_0)$, where $\eta_0 > 0$ is the constant from \eqref{eq:exp stability restricted semigroup}.
For $\tE_\infty^m$ as introduced in \eqref{eq:data and sol space average zero}, the space $\tcK$ is defined such that $(z,p) = (v,d,\ell,\omega,p) \in \tcK$ if and only if
\begin{equation*}
    \mre^{\eta(\cdot)}(v,d_m,\ell,\omega,p) \in \tE_\infty^m, \enspace d_\avg \in \rL^\infty(0,\infty)^3, \tand \mre^{\eta(\cdot)} \partial_t d_\avg \in \rL^p(0,\infty)^3,
\end{equation*}
and it is equipped with the norm
\begin{equation*}
    \| (v,d,\ell,\omega,p) \|_{\tcK} \coloneqq \left\| (\mre^{\eta(\cdot)}v,\mre^{\eta(\cdot)}d_m,\mre^{\eta(\cdot)}\ell,\mre^{\eta(\cdot)}\omega,\mre^{\eta(\cdot)}p) \right\|_{\tE_\infty^m} + \left\| d_\avg \right\|_{\rL^\infty(0,\infty)} + \left\| \mre^{\eta(\cdot)} \partial_t d_\avg \right\|_{\rL^p(0,\infty)}.
\end{equation*}
Moreover, for $\eps > 0$, we denote the ball in $\tcK$ with center zero and radius $\eps$ by $\tcK_\eps$.

Similarly as in \autoref{sec:proof local well-posedness}, it is important to estimate the terms related to the diffeomorphisms $X$ and $Y$.
In fact, it is necessary to refine the estimates established in \autoref{lem:props of the transform} and \autoref{lem:ests covariant contrvariant christoffel}.
We address the adjusted estimates of the terms related to the transform in the following lemma.
The abbreviations of the norms correspond to the situation of the time interval $(0,\infty)$ in the sequel.

\begin{lem}\label{lem:ests trafo global}
Let $X$, $Y$, $g^{ij}$ and $\Gamma_{jk}^i$ be as defined in \autoref{sec:change of var}, and consider $(v,d,\ell,\omega,p) \in \tcK_\eps$.
Then the following assertions are valid.
\begin{enumerate}[(a)]
    \item There exists a constant $C>0$, independent of $\eps$, such that $\| \rJ_X - \Id \|_{\infty,\infty} \le C \eps$.
    In particular, for $\eps < \nicefrac{1}{2C}$, it holds that $\rJ_X$ is invertible on $(0,\infty) \times \cF_0$.
    \item There is $C > 0$ such that $\| \partial_i X_j \|_{\infty,\infty}$, $\| \partial_i Y_j \|_{\infty,\infty}$, $\| g^{ij} \|_{\infty,\infty}$, $\| g_{ij} \|_{\infty,\infty}$, $\| \Gamma_{jk}^i \|_{\infty,\infty} \le C$ holds true for all $i,j,k \in \{1,2,3\}$.
    \item There exists $C > 0$ such that for all $i,j,k \in \{1,2,3\}$, we have
    \begin{equation*}
        \begin{aligned}
            &\quad\| \partial_t Y \|_{\infty,\infty} + \| \partial_j \partial_t X_i \|_{\infty,\infty} + \| \rJ_Y - \Id \|_{\infty,\infty} + \| \partial_i g^{jk} \|_{\infty,\infty} + \| \partial_i g_{jk} \|_{\infty,\infty}\\
            &\quad + \| \Gamma_{jk}^i \|_{\infty,\infty} + \| g^{ij} - \delta_{ij} \|_{\infty,\infty} + \| \partial_i \partial_j X_k \|_{\infty,\infty} + \| \partial_i \partial_j Y_k \|_{\infty,\infty}\le C \eps.
        \end{aligned}
    \end{equation*}
\end{enumerate}
\end{lem}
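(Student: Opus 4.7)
The plan is to reduce every estimate to two master smallness bounds on the rigid-body data,
\begin{equation*}
    \|\ell\|_{\rL^1(0,\infty)} + \|\ell\|_{\rL^\infty(0,\infty)} + \|\omega\|_{\rL^1(0,\infty)} + \|\omega\|_{\rL^\infty(0,\infty)} \le C\eps,
\end{equation*}
and then to propagate them through the auxiliary ODEs of \autoref{rem:procedure to determine diffeos} and through the flow \eqref{eq:IVP X}. The displayed bound itself follows from $(v,d,\ell,\omega,p) \in \tcK_\eps$ via the embedding $\rW^{1,p}(0,\infty) \hookrightarrow \BUC([0,\infty))$ applied to $\mre^{\eta(\cdot)} \ell$, $\mre^{\eta(\cdot)} \omega$, together with H\"older's inequality and $\mre^{-\eta(\cdot)} \in \rL^1(0,\infty) \cap \rL^{p'}(0,\infty)$. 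Gronwall applied to $\dot Q^\top = M Q^\top$ with $|M(t)| \lesssim |\omega(t)|$ then yields $\|Q\|_\infty + \|Q^\top\|_\infty \le \exp(C\eps) \le C$ and $\|Q - \Id\|_\infty \le C\eps$, and therefore $\|h'\|_{\rL^1(0,\infty)} + \|h'\|_{\rL^\infty(0,\infty)} + \|h\|_{\rL^\infty(0,\infty)} \le C\eps$.

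Next, by the definition \eqref{eq:rhs b}, by the smoothness of $\chi$ and by the boundedness of the Bogovski\u{\i} operator $B_{\cF_0}$ on the relevant $\rC^k$-spaces, every multi-index $|\beta| \le 3$ satisfies the twin bounds
\begin{equation*}
    \|\partial^\beta b\|_{\rL^\infty((0,\infty) \times \R^3)} + \|\partial^\beta b\|_{\rL^1(0,\infty;\rL^\infty(\R^3))} \le C\eps,
\end{equation*}
where the pointwise part uses $|\omega(t)|(1+|h(t)|) + |h'(t)| \le C\eps$ and the integrated part uses $\|(\omega,h')\|_{\rL^1} \le C\eps$. Integrating \eqref{eq:IVP X} in time yields $\sup_{t,y}|X(t,y) - y| \le C\eps$. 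Differentiating \eqref{eq:IVP X} in $y$ produces $\partial_t \rJ_X = (\nabla b)(t,X) \rJ_X$, so Gronwall together with the integrated smallness of $\nabla b$ gives $\|\rJ_X\|_{\infty,\infty} \le C$ and
\begin{equation*}
    \|\rJ_X - \Id\|_{\infty,\infty} \le \|\nabla b\|_{\rL^1(0,\infty;\rL^\infty(\R^3))} \cdot \|\rJ_X\|_{\infty,\infty} \le C\eps,
\end{equation*}
which proves (a); invertibility for small $\eps$ follows either from the Neumann series once $C\eps < 1$ or from Liouville's theorem combined with $\mdiv b = 0$.

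The remaining statements in (b) and (c) follow by the same template. Iteratively differentiating \eqref{eq:IVP X} in $y$ produces linear ODEs for $\partial^\alpha X$ whose forcing is polynomial in lower-order derivatives of $X$ and whose coefficients are controlled in $\rL^1(0,\infty;\rL^\infty(\R^3))$ by $C\eps$, so Gronwall yields $\|\partial^\alpha X\|_{\infty,\infty} \le C$ for $|\alpha| \le 3$ and $\|\partial^\alpha X\|_{\infty,\infty} \le C\eps$ whenever $2 \le |\alpha| \le 3$. The analogous bounds on $Y$, $\rJ_Y$ and $\partial^\alpha Y$ follow from $\rJ_Y = \rJ_X^{-1}$ (volume preservation) and the chain rule, in particular $\|\rJ_Y - \Id\|_{\infty,\infty} + \|\partial_i \partial_j Y_k\|_{\infty,\infty} \le C\eps$. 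The estimate $\|\partial_t Y\|_{\infty,\infty} \le C\eps$ is immediate from $\partial_t Y(t,x) = -\rJ_X^{-1}(t,Y(t,x))\, b(t,x)$ together with the pointwise smallness of $b$, while $\|\partial_j \partial_t X_i\|_{\infty,\infty} \le C\eps$ follows from $\partial_t(\partial_j X) = (\nabla b)(t,X)\,\partial_j X$, using $\|\nabla b\|_{\rL^\infty_{t,x}} \le C\eps$ and $\|\partial_j X\|_{\infty,\infty} \le C$. For the metric tensors, the decomposition
\begin{equation*}
    g^{ij} - \delta_{ij} = \sum_{k} (\partial_k Y_i - \delta_{ki})\, \partial_k Y_j + (\partial_j Y_i - \delta_{ji}),
\end{equation*}
and its analogue for $g_{ij}$, reduces everything to the already-proved smallness of $\partial Y - \Id$ and $\partial^2 Y$, and the Christoffel estimates follow from the explicit formula for $\Gamma_{jk}^i$ in terms of $g^{ij}$ and $\partial g_{ij}$.

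The main obstacle, compared with the local-in-time analogue \autoref{lem:props of the transform}, is that the smallness parameter is no longer the length $T$ of the time interval but the size $\eps$ of the weighted rigid-body data, while every Gronwall argument must now run uniformly on the unbounded interval $(0,\infty)$. This is precisely why both the pointwise bound $\|(\ell,\omega)\|_{\rL^\infty} \le C\eps$ (to keep coefficients such as $\nabla b$ pointwise small) and the integral bound $\|(\ell,\omega)\|_{\rL^1} \le C\eps$ (to keep the Gronwall exponentials close to one uniformly in time) are required simultaneously; dropping either one would prevent the uniform-in-time closure of the estimates.
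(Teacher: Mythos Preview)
Your proposal is correct and follows essentially the same route as the paper: integrate \eqref{eq:IVP X} to $X(t,y)=y+\int_0^t b(s,X(s,y))\rd s$, use the smallness of $b$ (and its derivatives) coming from the weighted $\rW^{1,p}$-control of $(\ell,\omega)$, and propagate this through $\rJ_X$, $\rJ_Y$, second derivatives, and the metric/Christoffel quantities, with the decomposition of $g^{ij}-\delta_{ij}$ identical to the paper's. Your explicit separation into an $\rL^\infty$-bound and an $\rL^1$-bound on $(\ell,\omega)$ is exactly what the paper achieves in one stroke via the pointwise decay $\|\partial^\beta b\|_{\infty,\infty}\le C\mre^{-\eta t}\eps$, which simultaneously yields pointwise smallness and time-integrability; the Gronwall closures you describe are therefore the same as the paper's direct integrations against $\mre^{-\eta s}$.
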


\begin{proof}
From \eqref{eq:IVP X}, we deduce that
\begin{equation}\label{eq:shape of X global}
    X(t,y) = y + \int_0^t b(s,X(s,y)) \rd s.
\end{equation}
Moreover, let us observe that $\| Q_i \|_{\rL^\infty(0,\infty)} \le C \| \omega_i \|_{\rL^\infty(0,\infty)} \le C$ in view of $(z,p) \in \tcK_\eps$.
Hence, the shape of $b$ as defined in \eqref{eq:rhs b} as well as $(z,p) \in \tcK_\eps$ imply that
\begin{equation}\label{eq:est of b}
    \| \partial^\beta b_i \|_{\infty,\infty} \le C \| Q_i \|_\infty (\| \ell_i \|_\infty + \| \omega_i \|_\infty) \le C \mre^{-\eta t}(\mre^{\eta t}\left(\| \ell \|_{\rW^{1,p}(0,\infty)} + \| \omega \|_{\rW^{1,p}(0,\infty)}\right) \le C \mre^{-\eta t} \eps
\end{equation}
for every multi-index $\beta$ such that $0 \le |\beta| \le 3$.
Inserting the estimate \eqref{eq:est of b} into \eqref{eq:shape of X global}, we obtain the estimate $\| \rJ_X - \Id \|_{\infty,\infty} \le C \eps \int_0^\infty \mre^{-\eta s} \rd s \le C \eps$, so the first part of the assertion of~(a) is shown.
The second part is a direct consequence of a Neumann series argument.
The assertion of~(b) follows similarly as~(a), using the shape of $X$ from \eqref{eq:shape of X global}, and an analogous strategy can be used for $Y$.
The boundedness of the contravariant and covariant tensors as well as of the Christoffel symbol also readily follows therefrom.
Let us also refer to \autoref{lem:props of the transform}(a) and \autoref{lem:ests covariant contrvariant christoffel} or \cite[Section~6.1]{GGH:13}.

It remains to prove~(c).
To this end, we first observe that $\partial_t Y = b^{(Y)}$, where $b^{(Y)}$ is as in \eqref{eq:rhs bY}.
The desired smallness of $\partial_t Y$ then follows by the boundedness of $\rJ_X$ and its inverse $\rJ_X^{-1}$ by (a) and (b) in conjunction with the estimate of $b$ from \eqref{eq:est of b}.
Similarly, we deduce the smallness of $\partial_j \partial_t X_k$, this time using that $\partial_t X = b$ together with the estimate of $\partial_j b$ resulting from \eqref{eq:est of b}.
With regard to the estimate of $\rJ_Y - \Id$, as in \eqref{eq:shape of X global}, we obtain $Y(t,x) = x + \int_0^t b^{(Y)}(s,Y(s,x)) \rd s$.
As in the proof of~(a), we derive therefrom the desired estimate of $\| \rJ_Y - \Id \|_{\infty,\infty}$.
Next, let us observe that the $\| \cdot \|_{\infty,\infty}$-estimates of $\partial_i g^{jk}$ and $\partial_i g_{jk}$ are a consequence of the estimates of $\partial_i \partial_j Y_k$ and $\partial_i \partial_j X_k$, respectively, in conjunction with the boundedness of $\partial_i Y_j$ and $\partial_i X_j$ obtained in~(b).
The estimate of $\Gamma_{jk}^i$ is in turn implied by the boundedness of $g^{jk}$ shown in~(b) as well as the estimates of $\partial_i g^{jk}$.
Therefore, the main task is to estimate $\partial_i \partial_j X_k$ and $\partial_i \partial_j Y_k$ in $\| \cdot \|_{\infty,\infty}$.
In that respect, \eqref{eq:shape of X global} and \eqref{eq:est of b} yield
\begin{equation*}
    \partial_i \partial_j X_k = \int_0^t \partial_i \partial_j b_k(s,X(s,y)) \rd s, \tand \| \partial_i \partial_j X_k \|_{\infty,\infty} \le C \eps \int_0^\infty \mre^{-\eta s} \rd s \le C \eps.
\end{equation*}
By the boundedness of $\rJ_X^{-1}$, the analogue estimate of $\partial_i \partial_j Y_k$ follows.
The last term to be estimated is $g^{ij} - \delta_{ij}$ which we expand as
\begin{equation*}
    g^{ij} - \delta_{ij} = (\partial_k Y_i - \delta_{ki}) \partial_k Y_j + \delta_{ki} \partial_k Y_j - \delta_{ij} = (\partial_k Y_i - \delta_{ki}) \partial_k Y_j + \partial_i Y_j - \delta_{ij}.
\end{equation*}
The desired estimate follows by the above estimate of $\rJ_Y - \Id$ and the boundedness of $\partial_i Y_j$ from~(b).
\end{proof}

In the sequel, we consider $\eps_0 > 0$ sufficiently small so that $\rJ_X$ is invertible for all $\eps \in (0,\eps_0)$ by \autoref{lem:ests trafo global}(a).
The estimates of the terms related to the diffeomorphisms $X$ and $Y$ from the above \autoref{lem:ests trafo global} are the main ingredient for the subsequent estimates of the nonlinear terms $G_1$, $G_2$, $G_3$ and $G_4$ from \eqref{eq:nonlin terms global}.
In contrast to \autoref{ssec:estimates nonlinear terms}, we also need to provide estimates of $G_{2,\avg}$ with regard to the maximal regularity type result \autoref{prop:max reg type result global}.

\begin{prop}\label{prop:self map type est global}
Assume that $p,q \in (1,\infty)$ satisfy \eqref{eq:cond p and q}.
Then there is a constant $C > 0$ such that for every $\eps \in (0,\eps_0)$ and for every $(v,d,\ell,\omega,p) \in \tcK_\eps$, it holds that
\begin{equation*}
    \left\| \mre^{\eta(\cdot)}(G_1(z,p),G_2(z),G_3(z,p),G_4(z,p)) \right\|_{\F_\infty} + \left\| G_{2,\avg}(z) \right\|_{\rL^1(0,\infty)} \le C \eps^2.
\end{equation*}
\end{prop}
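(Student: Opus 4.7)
The approach is to observe that every summand in $G_1$, $G_2$, $G_3$, $G_4$ from~\eqref{eq:nonlin terms global} is quadratic in the small parameter $\eps$, and to distribute the exponential weight $\mre^{\eta t}$ accordingly. Each term factors as either (i)~a transform-related coefficient times a derivative of a state variable, or (ii)~a product of two state variables. Coefficients of type~(i) are of order $\eps$ in $\rL^\infty((0,\infty)\times\cF_0)$ by~\autoref{lem:ests trafo global}(c), while the derivative of the state variable carries the weight $\mre^{\eta t}$ and is of order $\eps$ in the corresponding weighted $\rL^p(\rL^q)$-norm. For products of type~(ii), I place the weight on one factor, while the other factor is uniformly bounded by $C\eps$ with time-independent constant, thanks to $(v,d,\ell,\omega,p)\in\tcK_\eps$ and the embedding $\mre^{\eta(\cdot)}\E_\infty^m\hookrightarrow\rL^\infty(0,\infty;\rL^\infty(\cF_0)^6\times\R^6)$, which is valid under~\eqref{eq:cond p and q}. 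Combining these two ingredients yields the required $O(\eps^2)$-bound on the weighted $\F_\infty$-norm.

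For $G_1$, I plan to treat the transform corrections $(\cL_1-\Delta)v$, $\cM v$ and $(\cG-\nabla)p$ by expansion in the transform coefficients $g^{ij}-\delta_{ij}$, $\Gamma^i_{jk}$, $\partial_tY$, $\partial_i\partial_jY_k$ and $\partial_j\partial_tX_k$, using~\autoref{lem:ests trafo global}(c). The convection $\cN(v)=v_j\partial_jv_i+\Gamma^i_{jk}v_jv_k$ is handled by placing one factor of $v$ in $\rL^\infty((0,\infty)\times\cF_0)$ via the above embedding and the other in the weighted $\rL^p(\rW^{1,q})$-space. The director contribution $\cB(d)d$ is decomposed exactly as in the proof of~\autoref{lem:nonlinear ests Q}, each summand picking up a transform-coefficient smallness; the remaining contribution from $B(d)d$ as in~\eqref{eq:def B} is bi-quadratic in $\nabla d$ and bounded using the splitting $d=d_m+d_\avg$ together with $\nabla d=\nabla d_m$.

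For $G_2$ the analysis is analogous: $(\cL_2-\Delta)d$ and $\nabla d\cdot\partial_tY$ each contain a transform factor of order $\eps$, while $(v\cdot\nabla)d$ factors as a product of two state variables. The cubic terms $|\nabla d|^2(d+d_*)$ involve only $\nabla d_m$, since $d_\avg$ is spatially constant, multiplied by a uniformly bounded factor controlled via $|d_*|=1$ and $\|d\|_{\rL^\infty((0,\infty)\times\cF_0)}\le C\eps$. For $G_3$ and $G_4$, the rigid-body contributions $\omega\times\ell$ and $\omega\times(J_0\omega)$ are directly quadratic in state variables; the surface integrals involving $\nabla d(\nabla d)^\top$ are estimated via the trace inequality~\eqref{eq:spatial est d part surface int} together with the embedding~\eqref{eq:emb max reg space into Linfty H32}; the stress-tensor difference $Q\D v Q^\top-2\D v$ is controlled by $\|Q^\top Q-\Id\|_\infty=O(\eps)$, which follows from~\autoref{lem:props of the transform}(b) in conjunction with $\|\omega\|_{\rL^\infty(0,\infty)}\le\eps$.

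The main obstacle is the unweighted $\rL^1(0,\infty)$-estimate of $G_{2,\avg}$, which is required to invoke~\autoref{prop:max reg type result global}. Since~\eqref{eq:cond p and q} forces $p>2$, one has $\mre^{-2\eta(\cdot)}\in\rL^{p/(p-2)}(0,\infty)$. For a typical quadratic summand $fg$ with both factors of order $\eps$ in appropriate weighted $\rL^p$-norms, I rewrite $fg=\mre^{-2\eta t}(\mre^{\eta t}f)(\mre^{\eta t}g)$ and apply H\"older's inequality in time to obtain
\begin{equation*}
    \|fg\|_{\rL^1(0,\infty)}\le\left\|\mre^{-2\eta(\cdot)}\right\|_{\rL^{p/(p-2)}(0,\infty)}\left\|\mre^{\eta(\cdot)}f\right\|_{\rL^p(0,\infty)}\left\|\mre^{\eta(\cdot)}g\right\|_{\rL^p(0,\infty)}\le C\eps^2.
\end{equation*}
The spatial integration is handled term by term: for $\int_{\cF_0}\nabla d\cdot\partial_tY\,\rd y$ via $\|\partial_tY\|_{\rL^{q'}(\cF_0)}\|\nabla d\|_{\rL^q(\cF_0)}\le C\|\partial_tY\|_{\rL^\infty(\cF_0)}\|\nabla d\|_{\rL^q(\cF_0)}$, for the cubic $\int_{\cF_0}|\nabla d|^2d\,\rd y$ via $\|d\|_{\rL^\infty(\cF_0)}\|\nabla d\|_{\rL^q(\cF_0)}^2$ (using $q>3$ and the boundedness of $\cF_0$ for the embedding $\rL^q\hookrightarrow\rL^2$), and the remaining summands of $G_{2,\avg}$ analogously.
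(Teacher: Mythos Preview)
Your overall strategy---exploiting the quadratic structure of each summand and distributing the weight $\mre^{\eta t}$ onto one factor while bounding the other in $\rL^\infty$---is correct and matches the paper's approach. Two points deserve attention.

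\emph{The stress-tensor difference in $G_3$, $G_4$.} Your treatment here is flawed. You write that $Q\D v Q^\top - 2\D v$ is controlled by $\|Q^\top Q - \Id\|_\infty = O(\eps)$, but $Q(t)\in\SO(3)$, so $Q^\top Q = \Id$ identically and this quantity is zero, not $O(\eps)$. Moreover, invoking \autoref{lem:props of the transform}(b) on the half-line does not help: that estimate carries a factor $T$, which is infinite here. The correct argument, carried out in the paper, is to rewrite $Q\D v Q^\top - \D v = (Q-\Id)\D v Q^\top + \D v(Q^\top - \Id)$ and bound $\|Q^\top - \Id\|_\infty$ directly from the ODE $\dot Q^\top = M Q^\top$, $Q^\top(0)=\Id$: one has
\[
\|Q^\top(t) - \Id\| \le \int_0^t |M(s)|\,\rd s \le C\int_0^\infty \mre^{-\eta s}\,\|\mre^{\eta(\cdot)}\omega\|_{\rL^\infty}\,\rd s \le C\eps,
\]
where the exponential decay of $\omega$ is essential for convergence of the integral on $(0,\infty)$.

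\emph{The $\rL^1(0,\infty)$-estimate of $G_{2,\avg}$.} Your argument via the three-term H\"older split $\mre^{-2\eta t}(\mre^{\eta t}f)(\mre^{\eta t}g)$ with exponents $\nicefrac{p}{p-2}$, $p$, $p$ is correct for summands that are genuine products of two state variables, but is awkward for terms involving a transform coefficient (such as $\nabla d\cdot\partial_t Y$ or $(\cL_2-\Delta)d$), since those coefficients lie in $\rL^\infty$ rather than weighted $\rL^p$. The paper's route is simpler and avoids any case distinction: having already shown $\|\mre^{\eta(\cdot)}G_2\|_{p,q}\le C\eps^2$, one writes
\[
\|G_{2,\avg}\|_{\rL^1(0,\infty)} \le C\int_0^\infty \mre^{-\eta s}\bigl(\mre^{\eta s}\|G_2(s)\|_{\rL^q(\cF_0)}\bigr)\,\rd s \le C\|\mre^{-\eta(\cdot)}\|_{\rL^{p'}(0,\infty)}\,\|\mre^{\eta(\cdot)}G_2\|_{p,q} \le C\eps^2,
\]
so the $\rL^1$-bound follows for free from the weighted $\rL^p$-bound already established.
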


\begin{proof}
With regard to \eqref{eq:nonlin terms global}, it is possible to estimate all terms separately.
We only discuss a few terms in detail.
Making use of \autoref{lem:ests trafo global} and $(z,p) \in \tcK_\eps$, we first estimate
\begin{equation*}
    \begin{aligned}
        &\quad \| \mre^{\eta(\cdot)} \cM  v \|_{p,q}\\
        &\le C \sup_{i,j,k}\left(\| \partial_t Y_j \|_{\infty,\infty} \| \mre^{\eta(\cdot)} \partial_j v_i \|_{p,q} + (\| \Gamma_{jk}^i \|_{\infty,\infty} \| \partial_t Y_k \|_{\infty,\infty} + \| \partial_k Y_i \|_{\infty,\infty} \| \partial_j \partial_t X_k \|_{\infty,\infty}) \| \mre^{\eta(\cdot)} v_j \|_{p,q}\right)\\
        &\le C\left(\eps \| \mre^{\eta(\cdot)} z \|_{\E_\infty} + (\eps^2 + \eps) \| \mre^{\eta(\cdot)} z \|_{\E_\infty}\right) \le C \eps^2.
    \end{aligned}
\end{equation*}
A similar argument shows that
\begin{equation*}
    \| \mre^{\eta(\cdot)} (\cL_1 - \Delta) v \|_{p,q} \le C(\eps \| \mre^{\eta(\cdot)} z \|_{\E_\infty} + \eps \| \mre^{\eta(\cdot)} z \|_{\E_\infty} + \eps^2 \| \mre^{\eta(\cdot)} z \|_{\E_\infty}) \le C \eps^2.
\end{equation*}
As \autoref{lem:embedding of max reg space} is also valid for $\R_+$ as underlying time interval, and $\| z \|_{\E_\infty} \le C \mre^{-\eta t} \eps \le C \eps$ by $(z,p) \in \tcK_\eps$, we get by similar arguments as above
\begin{equation*}
    \| \mre^{\eta(\cdot)} \cN(v) \|_{p,q} \le \| \mre^{\eta(\cdot)} (v \cdot \nabla)v \|_{p,q} + C \sup_{i,j,k} \| \mre^{\eta(\cdot)} \Gamma_{jk}^i v_j v_k \|_{p,q} \le C \eps^2.
\end{equation*}
It also follows that $\| \mre^{\eta(\cdot)} (\cG  - \nabla)p \|_{p,q} \le C \eps^2$ by \autoref{lem:ests trafo global} and the representation of $\nabla p$.
In order to complete the estimate of $\| \mre^{\eta(\cdot)} G_1 \|_{\E_\infty^v}$, it remains to provide the estimate of $\cB(d)d$ as introduced in \autoref{sec:change of var}.
We start with the term $\cL_2$ and conclude from \autoref{lem:ests trafo global} and $\partial^\beta d = \partial^\beta d_m$ for every multi-index $\beta$ with $1 \le |\beta| \le 2$ the estimate
\begin{equation*}
    \| \mre^{\eta(\cdot)} \cL_2 d \|_{p,q} \le C(\| g^{jk} \|_{\infty,\infty} \| \mre^{\eta(\cdot)} d_m \|_{\E_\infty^d} + \| \Delta Y_k \|_{\infty,\infty} \| \mre^{\eta(\cdot)} d_m \|_{\E_\infty^d}) \le C \eps.
\end{equation*}
Together with another application of \autoref{lem:ests trafo global}, the above observation that $d_\avg$ is constant in space and a similar estimate of $\| d_m \|_{\E_\infty^d}$ as above in the context of the fluid velocity, this results in
\begin{equation*}
    \left\| \mre^{\eta(\cdot)} \sum\limits_{l=1}^3 \left((\cL_2 h)_{l}\sum\limits_{m=1}^3 {\partial_m d_{l}}{\partial_i Y_m}  \right) \right\|_{p,q} \le C \eps \| d_m \|_{\E_\infty^d} \le C \eps^2.
\end{equation*}

The desired estimate of $\| \mre^{\eta(\cdot)} \cB(d)d \|_{\E_\infty^d}$ follows by estimating the second part in an analogous way by
\begin{equation*}
    \begin{aligned}
        &\quad \left\| \mre^{\eta(\cdot)} \sum\limits_{k,l,m=1}^3 \left( {\partial_m d_{l}}{\partial_k Y_m}\right)\left(\sum\limits_{j=1}^3 (\partial_j \partial_m d_{l})(\partial_k Y_j \partial_i Y_m) +  (\partial_m d_l )(\partial_k \partial_i Y_m )\right) \right\|_{p,q}\\
        &\le C \| d_m \|_{\E_\infty^d} \left(\| \mre^{\eta(\cdot)} d_m \|_{\E_\infty^d} + \| \mre^{\eta(\cdot)} d_m \|_{\E_\infty^d} \| \partial_k \partial_i Y_m \|_{\infty,\infty}\right) \le C \eps^2.
    \end{aligned}
\end{equation*}
With regard to the estimate of $\| \mre^{\eta(\cdot)} G_2 \|_{\F_\infty^d}$, we observe that $(\cL_2 - \Delta)d$ and $(v \cdot \nabla) d$ can be estimated similarly as the above terms, while
\begin{equation*}
    \| \mre^{\eta(\cdot)} \nabla d \cdot \partial_t Y \|_{p,q} \le C \| \mre^{\eta(\cdot)} d_m \|_{\E_\infty^d} \| \partial_t Y \|_{\infty,\infty} \le C \eps^2
\end{equation*}
follows from \autoref{lem:ests trafo global}.
By \autoref{lem:embedding of max reg space} on $\R_+$ and $\| d \|_{\infty,\infty} \le \| d_m \|_{\E_\infty^d} + \| d_\avg \|_\infty \le 2 \eps$ resulting from $(z,p) \in \tcK_\eps$, we get
\begin{equation*}
    \left\| \mre^{\eta(\cdot)} |\nabla d|^2 d \right\|_{p,q} \le C \| d \|_{\infty,\infty} \| \nabla d_m \|_{\infty,\infty} \left\| \mre^{\eta(\cdot)} \nabla d_m \right\|_{p,q} \le C \eps \| d_m \|_{\E_\infty^d} \left\| \mre^{\eta(\cdot)} d_m \right\|_{\E_\infty^d} \le C \eps^3.
\end{equation*}
The corresponding estimate for $|\nabla d|^2 d_*$ by $C \eps^2$ follows in the same way.
In conclusion, the estimate of $\| \mre^{\eta(\cdot)} G_2 \|_{\F_\infty^d}$ is proved.
Concerning the estimates of $G_3$ and $G_4$, it readily follows that $\| \mre^{\eta(\cdot)} m \omega \times \ell \|_p \le C \eps^2$ and likewise for $\| \mre^{\eta(\cdot)} \omega \times (J_0 \omega) \|_p$, so it remains to estimate the surface integrals.
Let us observe that the surface integral in $G_4$ can be treated completely analogously as the one in $G_3$, so it suffices to deal with the latter one.
For the first addend in the surface integrals, we proceed as in \eqref{eq:est d part surface ints}, recall the embedding \eqref{eq:emb max reg space into Linfty H32} and invoke the boundedness of $Q_i$ as well as $Q_i^\top$ as established in the proof of \autoref{lem:ests trafo global} to obtain
\begin{equation*}
    \begin{aligned}
        \left\| \mre^{\eta(\cdot)} \int_{\partial \cS_0} Q^\top \nabla d [\nabla d]^\top Q N \rd \Gamma \right\|_p
        &= \left\| \mre^{\eta(\cdot)} \int_{\partial \cS_0} Q^\top \nabla d_m [\nabla d_m]^\top Q N \rd \Gamma \right\|_p\\
        &\le C \| d_m \|_{\rL^\infty(0,\infty;\rH^{\nicefrac{3}{2}}(\cF_0))} \| \mre^{\eta(\cdot)} d_m \|_{\rL^p(0,\infty;\rH^{\nicefrac{3}{2}}(\cF_0))}\\
        &\le C \| d_m \|_{\E_\infty^d} \| \mre^{\eta(\cdot)} d_m \|_{\E_\infty^d} \le C \eps^2.
    \end{aligned}
\end{equation*}
In order to estimate the other addend, we rewrite $Q \D v Q^\top - \D v = (Q - \Id) \D v Q^\top + \D v(Q^\top - \Id)$.
For an estimate by $C \eps^2$, it is thus necessary to show that $Q - \Id$ as well as $Q^\top - \Id$ can be bounded by $\eps$.
By the definition of the transform in \autoref{sec:change of var}, we obtain $Q^\top(t) = \Id + \int_0^t M(s) Q^\top(s) \rd s = \Id + \int_0^t \omega(s) \times \cdot \rd s$.
As a result, the estimate
\begin{equation*}
    \| Q^\top - \Id \|_\infty \le \int_0^\infty \mre^{-\eta s} \| \mre^{\eta(\cdot)} \omega \|_\infty \rd s \le C \| \mre^{\eta(\cdot)} \omega \|_{\E_\infty^\omega} \le C \eps
\end{equation*}
is valid.
A similar estimate can be shown for $\| Q - \Id \|_\infty$.
Combining the above arguments and employing \eqref{eq:joint est surface ints}, we conclude
\begin{equation*}
    \left\| \mre^{\eta(\cdot)} \int_{\del \cS_0} Q^\top (Q \D  v Q^\top - \D v) Q N \rd \Gamma \right\|_p \le C \eps \bigl\| \mre^{\eta(\cdot)} v \bigr\|_{\rL^p(0,\infty;\rW^{1 + \nicefrac{2}{q},q}(\cF_0))} \le C \eps \bigl\| \mre^{\eta(\cdot)} v \bigr\|_{\E_\infty^v} \le C \eps,
\end{equation*}
completing the estimates of $G_3$ and also of $G_4$ by analogy.
For the estimate of $\| G_{2,\avg} \|_{\rL^1(0,\infty)}$, note that
\begin{equation}\label{eq:G1avg}
    G_{2,\avg} = \frac{1}{|\cF_0|} \int_{\cF_0} (\cL_2 - \Delta)v - \nabla d \cdot \partial_t Y + (v \cdot \nabla)d + |\nabla d|^2 d + |\nabla d|^2 d_* \rd y.
\end{equation}
It follows by the boundedness of the spatial domain $\cF_0$ and the above estimate of $(\cL_2 - \Delta)d$ that
\begin{equation*}
    \begin{aligned}
        \left\| \frac{1}{|\cF_0|} \int_{\cF_0} (\cL_2 - \Delta)d \rd y \right\|_{\rL^1(0,\infty)}
        &\le C \int_0^\infty \| (\cL_2 - \Delta)d \|_{\rL^q(\cF_0)} \rd s\\
        &\le C \| \mre^{\eta(\cdot)} (\cL_2 - \Delta) d \|_{p,q} \int_0^\infty \mre^{-\eta s p'} \rd s \le C \eps^2,
    \end{aligned}
\end{equation*}
where we denote by $p'$ the H\"older conjugate of $p$.
In the same way, we obtain the corresponding estimates for the remaining terms in \eqref{eq:G1avg}.
In total, the assertion of the proposition follows.
\end{proof}

The next proposition on Lipschitz estimates follows in a similar way as in \autoref{ssec:estimates nonlinear terms} by also using Lipschitz estimates of the transform as well as the boundedness and decay established in \autoref{lem:ests trafo global}.

\begin{prop}\label{prop:lipschitz est global}
Assume that $p,q \in (1,\infty)$ satisfy \eqref{eq:cond p and q}.
Then there exists a constant $C>0$ such that for every $\eps \in (0,\eps_0)$ and for $(v_1,d_1,\ell_1,\omega_1,p_1)$, $(v_2,d_2,\ell_2,\omega_2,p_2) \in \tcK_\eps$, it follows that
\begin{equation*}
    \begin{aligned}
        &\quad \left\| \mre^{\eta(\cdot)} (G_1(z_1,p_1) - G_1(z_2,p_2),G_2(z_1) - G_2(z_2),G_3(z_1,p_1) - G_3(z_2,p_2),G_4(z_1,p_1) - G_4(z_2,p_2)) \right\|_{\F_\infty}\\
        &\quad + \left\| G_{2,\avg}(z_1) - G_{2,\avg}(z_2) \right\|_{\rL^1(0,\infty)} \le C \eps \left\| (z_1,p_1) - (z_2,p_2) \right\|_{\tcK}.
    \end{aligned}
\end{equation*}
\end{prop}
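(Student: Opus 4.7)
The plan is to follow the same bilinear/trilinear splitting strategy used for \autoref{prop:self map type est global}, but applied to each term of $G_j(z_1,p_1)-G_j(z_2,p_2)$ after standard telescoping. In more detail, for any term of the form $T(z_1,p_1)-T(z_2,p_2)$ where $T$ depends multiplicatively on transform-quantities (such as $\rJ_Y$, $g^{ij}$, $\Gamma^i_{jk}$, $Q$, $\partial_t Y$) and polynomially on $(v,d,\nabla d,\nabla^2 v,\nabla p,\ell,\omega)$, I would add and subtract intermediate factors so that each resulting piece contains exactly one difference. One factor of that difference is then estimated in a \emph{strong} norm (typically $\F_\infty^\bullet$ or $\rL^1(0,\infty)$ for $G_{2,\avg}$) against $\|(z_1,p_1)-(z_2,p_2)\|_{\tcK}$, while the remaining factors are estimated uniformly by $\eps$ using the boundedness and smallness bounds provided by \autoref{lem:ests trafo global} together with \autoref{lem:embedding of max reg space}. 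The overall gain $C\eps$ then comes from the factors not hit by the difference, precisely as in \autoref{prop:self map type est global}.

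Before running this, I would first record the analogue of \autoref{lem:ests trafo global} for differences: for $(z_i,p_i)\in\tcK_\eps$ and the associated diffeomorphisms $X_i$, $Y_i$, matrices $Q_i$, and metric/Christoffel symbols $g_i^{jk}$, $\Gamma_{i,jk}^m$, one has
\begin{equation*}
\bigl\|\mre^{\eta(\cdot)}\partial^\beta (X_1-X_2)\bigr\|_{\infty,\infty}+\bigl\|\mre^{\eta(\cdot)}\partial^\beta(Y_1-Y_2)\bigr\|_{\infty,\infty}+\|\mre^{\eta(\cdot)}(Q_1-Q_2)\|_\infty\le C\|(z_1,p_1)-(z_2,p_2)\|_{\tcK},
\end{equation*}
for $0\le|\beta|\le 2$, and the analogous estimates for $g_i^{jk}-g_i^{jk}$, $\Gamma_{1,jk}^i-\Gamma_{2,jk}^i$, $\partial_t Y_1-\partial_t Y_2$. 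These follow by integrating the ODEs \eqref{eq:IVP X}, \eqref{eq:IVP Y} and \eqref{eq:deduction Q} and using that the right-hand sides are Lipschitz with respect to $(\ell,\omega)$, exactly as in \autoref{lem:props of the transform}(b)--(d) and \autoref{lem:ests covariant contrvariant christoffel}, but in the weighted framework so that the $\mre^{\eta(\cdot)}$ factor from $(\ell_1-\ell_2)$, $(\omega_1-\omega_2)$ is transferred to the transform differences via $\int_0^t \mre^{-\eta(t-s)}\mre^{\eta s}(\cdots)\rd s$.

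Given these ingredients, the Lipschitz estimates for the genuine quadratic/trilinear contributions — $\cN(v)$, $\cB(d)d$, $|\nabla d|^2 d$, $\omega\times\ell$, $\omega\times(J_0\omega)$, and the two surface integrals in $G_3$, $G_4$ — are routine telescoping arguments mimicking the bounds in the proofs of \autoref{prop:nonlinear ests F1}, \autoref{lem:nonlinear ests Q}, \autoref{prop:nonlinear ests F0} and \autoref{prop:nonlinear ests F2 and F3}, with the local $T^{\nicefrac1p}$-smallness replaced by the global $\eps$-smallness coming from $\tcK_\eps$. For the director components $\partial^\beta d$ with $|\beta|\ge 1$, note that only the mean-zero part $d_m$ enters, so the full $\rL^p$-in-time control with weight $\mre^{\eta(\cdot)}$ is available. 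For the coefficient-difference pieces in the linear-in-$(v,p)$ parts of $G_1$, one uses the weighted Lipschitz estimates above on the transform quantities together with the $\rL^\infty$-in-time bounds on $z_1$ from the embedding $\E_\infty\hookrightarrow \BUC([0,\infty);\rX_\gamma)$. Finally, the $\rL^1$-in-time bound on $G_{2,\avg}(z_1)-G_{2,\avg}(z_2)$ is obtained exactly as in \autoref{prop:self map type est global}: one writes $G_{2,\avg}$ as the spatial integral from \eqref{eq:G1avg}, uses H\"older in space over the bounded domain $\cF_0$, and then applies H\"older in time with the weight $\mre^{\eta(\cdot)}$, so that the $\rL^{p'}$-norm of $\mre^{-\eta(\cdot)}$ absorbs the remaining time integrability.

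The main obstacle is book-keeping: nothing in any individual term is genuinely harder than the self-map estimate in \autoref{prop:self map type est global}, but one must verify that every time a transform-coefficient difference is produced, the weight $\mre^{\eta(\cdot)}$ can be legitimately inserted, which ultimately reduces to the preliminary weighted-Lipschitz lemma for $X,Y,Q$ described above. Once that lemma is in hand, the remaining estimates follow by the same decomposition techniques as in Sections~6.1--6.2 of the companion work \cite{GGH:13} combined with the embeddings of \autoref{lem:embedding of max reg space}, and summing all contributions yields the asserted bound with an overall factor $C\eps$.
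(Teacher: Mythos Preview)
Your approach is essentially the same as the paper's, which simply states that the proposition ``follows in a similar way as in \autoref{ssec:estimates nonlinear terms} by also using Lipschitz estimates of the transform as well as the boundedness and decay established in \autoref{lem:ests trafo global}.'' Your outline is a faithful and more detailed expansion of this remark.

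There is, however, one inaccuracy in your auxiliary weighted Lipschitz lemma that you should correct. The estimate
\[
\bigl\|\mre^{\eta(\cdot)}\partial^\beta (X_1-X_2)\bigr\|_{\infty,\infty}+\bigl\|\mre^{\eta(\cdot)}(Q_1-Q_2)\bigr\|_\infty\le C\|(z_1,p_1)-(z_2,p_2)\|_{\tcK}
\]
is false as stated: integrating the ODEs gives, e.g.,
\[
|X_1(t,y)-X_2(t,y)|\le C\int_0^t \mre^{-\eta s}\,\bigl\|\mre^{\eta(\cdot)}(\ell_1-\ell_2,\omega_1-\omega_2)\bigr\|_\infty\rd s\le C\|(z_1,p_1)-(z_2,p_2)\|_{\tcK},
\]
which is an \emph{unweighted} bound; multiplying by $\mre^{\eta t}$ would make it blow up. The same applies to $Q_1-Q_2$, $\rJ_{Y_1}-\rJ_{Y_2}$, $g_1^{jk}-g_2^{jk}$ and $\Gamma_{1,jk}^i-\Gamma_{2,jk}^i$. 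Only the time-derivative quantities $\partial_t Y_1-\partial_t Y_2$ and $\partial_j\partial_t X_1-\partial_j\partial_t X_2$ (which equal differences of $b^{(Y)}$ and $\partial_j b$, hence inherit the $\mre^{-\eta t}$ decay pointwise) admit genuinely weighted Lipschitz bounds.

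This does not break the argument: in every term where a transform-coefficient difference appears, the exponential weight can instead be placed on the accompanying factor of $(v_i,\nabla d_{i,m},\nabla^2 d_{i,m},\nabla p_i,\ell_i,\omega_i)$, which lies in $\mre^{-\eta(\cdot)}\F_\infty^\bullet$ with norm $\le\eps$ by membership in $\tcK_\eps$. Thus, for instance, $\|\mre^{\eta(\cdot)}(g_1^{jk}-g_2^{jk})\partial_k\partial_j d_{2,m}\|_{p,q}\le \|g_1^{jk}-g_2^{jk}\|_{\infty,\infty}\|\mre^{\eta(\cdot)}d_{2,m}\|_{\E_\infty^d}\le C\eps\|(z_1,p_1)-(z_2,p_2)\|_{\tcK}$. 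Once you restate the preliminary lemma with unweighted $\|\cdot\|_{\infty,\infty}$-bounds on the transform differences (and weighted bounds only for $\partial_t Y$, $\partial_j\partial_t X$), the rest of your plan goes through exactly as written.
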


\subsection{Proof of \autoref{thm:global wp close to equilibria}}\label{ssec:proof of global wp close to equilibria}
\

As in the situation of the local well-posedness in \autoref{sec:proof local well-posedness}, we will first prove \autoref{thm:global wp close to equilibria reformulated} stated in the reference configuration and then deduce \autoref{thm:global wp close to equilibria} therefrom.

\begin{proof}[Proof of \autoref{thm:global wp close to equilibria reformulated}]
We start by defining the map for which we will show that it admits a unique fixed point.
We introduce $\Psi \colon \tcK_\eps \to \tcK$, where given $(z,p) = (v,d,\ell,\omega,p)$, we denote by $\Psi(z,p)$ the solution to \eqref{eq:linearization global} with right-hand sides $G_1(z,p)$, $G_2(z,p)$, $G_3(z,p)$ and $G_4(z,p)$.
It then follows by \autoref{prop:max reg type result global} that $\Psi$ is well-defined.
For $\eps \in (0,\eps_0)$, where $\eps_0 > 0$ is made precise after \autoref{lem:ests trafo global}.
Moreover, for $(z,p),(z_1,p_1),(z_2,p_2) \in \tcK_\eps$, we conclude from \autoref{prop:max reg type result global}, \autoref{prop:self map type est global}, \autoref{prop:lipschitz est global} and $\| (v_0,d_0-d_*,\ell_0,\omega_0) \|_{\rX_\gamma} \le \delta$ that
\begin{equation*}
    \left\| \Psi(z,p) \right\|_{\tcK} \le \Cexp (\delta + C \eps^2), \tand \left\| \Psi(z_1,p_1) - \Psi(z_2,p_2) \right\|_{\tcK} \le \Cexp C \eps.
\end{equation*}
Choosing $\delta \le \nicefrac{1}{2 \Cexp}$ as well as $\eps \le \min\left\{\eps_0,\nicefrac{1}{2 \Cexp C},\nicefrac{1}{2C},\sqrt{\nicefrac{\delta}{C}}\right\}$, we derive that $\Psi$ is a self-map and a contraction.
Hence, there exists a unique fixed point $(z',p') = (v',d',\ell',\omega',p')$ by the contraction mapping principle.
This fixed point is then the desired unique global strong solution to the transformed interaction problem \eqref{eq:cv1}--\eqref{eq:cv3}.
The regularity of the functions and the decay properties follow by construction of $\tcK$.
Moreover, the regularity of the transform $X$ is a consequence of \autoref{lem:props of the transform}.
\end{proof}

\begin{proof}[Proof of \autoref{thm:global wp close to equilibria}]
\autoref{thm:global wp close to equilibria reformulated} especially yields $(\ell,\omega) \in \rW^{1,p}(0,\infty)^6$, and $h'$, $\Omega$ and the diffeomorphism $X$ can be recovered therefrom as revealed in \autoref{rem:procedure to determine diffeos}.
In order to derive the solution to the original problem, we apply the backward change of coordinates as introduced in \autoref{sec:change of var}, and it readily follows that this is a solution to the interaction problem \eqref{eq:LC-fluid equationsiso}--\eqref{eq:initial} in the desired regularity class.
It remains to show that their is no collision, i.e., $\dist(\cS(t),\partial \cO) > \nicefrac{r}{2}$ is valid for all $t \in [0,\infty)$.
From \eqref{eq:solid dom} and the above estimates, it follows for $\delta$ and $\eps$ sufficiently small that $\dist(\cS(t),\cS_0) \le \| h(t) \|_{\R^3} +  \| Q(t) - \Id \|_{\R^{3 \times 3}} < \nicefrac{r}{2}$ for all $t \in [0,\infty)$.
By virtue of $\dist(\cS_0,\partial \cO) > r$, no collision can occur.
\end{proof}

\medskip 

{\bf Acknowledgements}
Tim Binz would like to thank DFG for support through project 538212014.
Felix Brandt~and Matthias Hieber~acknowledge the support by DFG project FOR~5528, while Arnab Roy~would like to express his gratitude to the Alexander von Humboldt-Stiftung / Foundation for their support.

\bibliography{lcfsi}
\bibliographystyle{siam}

\end{document}